\documentclass[12pt]{article}
\date{\today}
\usepackage[usenames,dvipsnames]{color}
\usepackage{amsmath,amsthm,amsfonts,amssymb,multicol,amscd,amsbsy}
\usepackage{graphicx}
\usepackage{fullpage}
\usepackage{booktabs}
\usepackage{array}
\usepackage{subfigure}
\usepackage{enumerate}
\usepackage{url}
\usepackage[nodayofweek]{datetime}
\usepackage[english]{babel}
\usepackage{mathtools}
\usepackage[toc,page]{appendix}
\usepackage{verbatim} 
\usepackage{amsthm} 
\usepackage{enumitem}
\usepackage{enumerate}
\usepackage{bbm} 
\usepackage[normalem]{ulem} 
\usepackage{bm}
\usepackage{hyperref}
\usepackage{latexsym}
\usepackage[left=4cm, right=4cm, top=2.5cm, bottom=2.5cm]{geometry}

\DeclarePairedDelimiter\ceil{\lceil}{\rceil}

\usepackage{authblk}



\newcommand{\be}{\begin}
\newcommand{\e}{\end}
\newcommand{\beq}{\begin{equation}}
\newcommand{\eeq}{\end{equation}}
\newcommand{\beqs}{\begin{equation*}}
\newcommand{\eeqs}{\end{equation*}}

\newcommand{\ol}{\overline}
\newcommand{\ul}{\underline}


\renewcommand{\l}{\left}
\renewcommand{\r}{\right}
\renewcommand{\d}{\mathrm{d}} 
\newcommand{\De}{\Delta}

\newcommand{\inn}[1]{\langle #1 \rangle}
\newcommand{\set}[1]{\mathbb{#1}}
\newcommand{\curly}[1]{\mathcal{#1}}

\newcommand{\setof}[2]{\left\{ #1\; : \;#2 \right\}}

\newcommand{\R}{\set{R}}
\newcommand{\C}{\set{C}}
\newcommand{\Z}{\set{Z}}
\newcommand{\E}{\set{E}}
\newcommand{\T}{\set{T}}

\newcommand{\hi}{\mathcal{H}}

\newcommand{\om}{\omega}

\newcommand{\eps}{\epsilon}

\newcommand{\sig}{\sigma}
\newcommand{\al}{\alpha}
\newcommand{\de}{\delta}

\newcommand{\vp}{\varphi}

\newcommand{\ind}{\mathbbm{1}}		

\newcommand{\ttmatrix}[4]{\left(\be{array}{cc} #1&#2\\	#3&#4 \e{array}	\right)}


\newcommand{\qmexp}[1]{\langle #1\rangle}
\newcommand{\jap}[1]{\langle #1\rangle}

\newcommand{\abs}[1]{\left\vert#1\right\vert}
\renewcommand{\it}{\infty}

\newcommand{\supp}{\,\mathrm{supp}\,}

\newcommand{\ux}{\underline{x}}

\newcommand{\norm}[1]{ || #1 ||}
\newcommand{\br}[1]{\langle #1 \rangle}


\newcommand{\ft}[1]{\widehat{{#1}}}			

\newtheorem{thm}{Theorem}[section]
\newtheorem{lm}[thm]{Lemma}
\newtheorem{cor}[thm]{Corollary}
\newtheorem{prop}[thm]{Proposition}

\theoremstyle{definition}
\newtheorem{defn}[thm]{Definition}

\newtheorem{exam}[thm]{Example}

\numberwithin{equation}{section}

\theoremstyle{remark}

\def\dotuline{\bgroup
  \ifdim\ULdepth=\maxdimen  
   \settodepth\ULdepth{(j}\advance\ULdepth.4pt\fi
  \markoverwith{\begingroup
  \advance\ULdepth0.08ex
  \lower\ULdepth\hbox{\kern.15em .\kern.1em}%
  \endgroup}\ULon}

\def\dashuline{\bgroup
  \ifdim\ULdepth=\maxdimen  
   \settodepth\ULdepth{(j}\advance\ULdepth.4pt\fi
  \markoverwith{\kern.15em
  \vtop{\kern\ULdepth \hrule width .3em}%
  \kern.15em}\ULon}
\allowdisplaybreaks

\begin{document}
\title{On the averaged Green's function of an elliptic equation with random coefficients}
\author[1]{Jongchon Kim}
\author[1,2]{Marius Lemm}
\affil[1]{\small{School of Mathematics, Institute for Advanced Study}}
\affil[2]{\small{Department of Mathematics,  Harvard University}}
\maketitle

\begin{abstract}
We consider a divergence-form elliptic difference operator on the lattice $\Z^d$, with a coefficient matrix that is an i.i.d.\ perturbation of the identity matrix. Recently, Bourgain introduced novel techniques from harmonic analysis to prove the convergence of the Feshbach-Schur perturbation series related to the averaged Green's function of this model.
 Our main contribution is a refinement of Bourgain's approach which improves the key decay rate from $-2d+\eps$ to $-3d+\eps$. (The optimal decay rate is conjectured to be $-3d$.) As an application, we derive estimates on higher derivatives
 of the averaged Green's function which go beyond the second derivatives considered by Delmotte-Deuschel and related works.
\end{abstract}

\section{Introduction}

In the late 1950s, De Giorgi, Nash and Moser \cite{deGiorgi,Nash,Moser} completed the classical regularity theory for elliptic and parabolic equations with bounded and measurable coefficients. Their results include the H\"older regularity of weak solutions $u$ to the divergence-form elliptic equation $\nabla^* \mathbf{A}(x) \nabla u=0$ with rough coefficient matrix $\mathbf{A}(x)$. Subsequently, it was also shown that the Green's function $G_{\mathbf{A}}(x,y)$ is controlled by the Green's function of the ordinary Laplacian. Specifically, when $d\geq 3$, it holds that
\beq\label{eq:Aronson}
|G_{\mathbf{A}}(x,y)|\leq C |x-y|^{2-d},
\eeq
for all $x,y\in \R^d$; see \cite{LSW,A1,A2}. 

When the coefficient matrix $\mathbf{A}(x)$ is generated by a stationary \emph{random} process, one may consider regularity properties that hold on average or with high probability; see, e.g., \cite{AM,GNOreg,GloriaOtto,MO1}.  
Here we focus on the \emph{averaged} (or ``annealed'') Green's function $\mathbb E[G_{\mathbf{A}}(x,y)]$, which is translation-invariant in the sense that 
\[ \mathbb E[G_{\mathbf{A}}(x,y)]=G(x-y)\]
for some function $G$, \emph{cf}. \eqref{eq:Gr}. In this setting, Conlon-Naddaf \cite{CN} (see also \cite{Con}) observed that the averaged Green's function $G(x)$ is continuously differentiable for $x\neq 0$ and its derivative satisfies the decay estimate
$$
|\nabla G(x)|\leq C|x|^{1-d},
$$
when working on either $\R^d$ or $\Z^d$ with $d\geq 3$. Note that the decay rate $1-d$ is optimal in view of the Green's function of the ordinary Laplacian. In the discrete setting, \cite{CN} also proved that the second derivatives of $G$ are controlled by $C_\de (1+|x|)^{-d+\de}$ for arbitrarily small $\delta>0$. 

Their result was extended by Delmotte-Deuschel \cite{DD} who adapted the classical regularity theory to the random setting. They showed that the second derivatives of the averaged Green's function can actually be controlled with the optimal decay rate:
\beq\label{eq:DD}
|\nabla^{\al} G(x)|\leq C|x|^{2-d-|\al|},\qquad \textnormal{for any multi-index } |\al|\leq 2,
\eeq
again on $\R^d$ or $\Z^d$ when $d\geq 3$. In fact, \cite{DD} establish a stronger version of \eqref{eq:DD} where one takes the absolute value before taking expectation. Moreover, they have a similar result for $d=2$, i.e., \eqref{eq:DD} holds with $1\leq|\al|\leq 2$, if the first and second derivatives of $G$ are properly interpreted. In the discrete case (i.e., on $\Z^d$), there is no singularity near the origin and so $|x|^{2-d-|\al|}$ can be replaced by $(1+|x|)^{2-d-|\al|}$ in \eqref{eq:DD}. 

We mention that the elliptic results presented here have parabolic analogs; see, e.g., \cite{CKS,CN,DD,Nash}. 

In the last few years, the derivative estimate \eqref{eq:DD} on the averaged Green's function has been generalized to higher moments and to the non-scalar case \cite{CGO,GM,MO1,MO2}. One reason for the continued interest in these Green's function estimates is that they have applications to the quantitative theory of \emph{stochastic homogenization}. Consider for example a family of equations of the form
\beq\label{eq:hom}
\nabla^* \mathbf{A}\l(\frac{x}{\eps}\r) \nabla u_\eps=0,
\eeq
indexed by $\eps>0$, with a random coefficient matrix $\mathbf{A}(x)$. Under certain assumptions on $\mathbf{A}(x)$, it is known that, as $\eps\to 0$, a solution $u_\eps$ to \eqref{eq:hom} can be approximated by a solution $u$ to a ``homogenized'' deterministic \emph{constant coefficient} equation. This general phenomenon is called stochastic homogenization and has been extensively studied; see \cite{JKO,Kozlov,SpencerNaddaf,PV,Yu} and the more recent works \cite{AKM,AM,CN2,CS,DGO,GNO,GloriaOtto,GloriaOtto1,GloriaOtto2}. While stochastic homogenization furnishes part of our general motivation, we will not directly discuss it anymore in the following.\\

Despite the recent research activities on the averaged Green's function, it has been unknown, to the best of our knowledge, whether the optimal decay rate in \eqref{eq:DD} holds true beyond the second derivatives. A consequence of our results is that the estimate \eqref{eq:DD} indeed extends to higher order derivatives for all $|\alpha|\leq d+1$, in the discrete setting when $\mathbf{A}(x)$ is an i.i.d.\ perturbation of the identity matrix.

 Our argument is different from those in \cite{CN,DD} and is based on the line of research recently initiated by I.M.~Sigal \cite{Sigal} and J.~Bourgain \cite{B}. Bourgain gave a rather precise description of an averaged operator $\curly L$ (whose Green's function is exactly the averaged Green's function $G$ from above), by establishing the convergence of the Feshbach-Schur perturbation series. Our main result improves a key decay estimate for $\curly L$ obtained in \cite{B}; see Theorem \ref{thm:main} below. The estimate on the higher derivatives of the averaged Green's function is a corollary of this main result and is obtained by using standard tools from Fourier analysis.




We organize this paper as follows. In the remainder of this section, we give precise statements of our setup and main results and an outline of the argument. In Section \ref{sec:pre}, we provide background: (a) We precise operator-theoretic aspects of the setup, and (b) we recall two key tools introduced in \cite{B} and state abstract versions to be used later on. We prove our main results, Theorems \ref{thm:main} and \ref{thm:nlogn}, in Sections \ref{sec:mainresult} and \ref{sec:main2}, respectively. We prove the new derivative estimates on the averaged Green's function, Corollary \ref{cor:GF}, in the Appendix. In addition, we give proofs of the statements in Section \ref{sec:pre} in the Appendix for completeness. 
\\

\textbf{Notations}. Let $-\Delta = \nabla^* \nabla$ be the standard Laplacian on $\Z^d$, where $\nabla=(\nabla_1, \nabla_2,\ldots,\nabla_d)^T$ is the discrete derivative. For a function $u:\Z^d\to \R$ or $\C$, it is defined by
$$
\nabla_j u (x) := u(x+e_j)-u(x)
$$
for the $j$-th standard unit vector $e_j$. We denote by $\nabla^*=(\nabla_1^*, \ldots,\nabla_d^*)$ the adjoint of $\nabla$, where
$$
\nabla_j^* u (x) := u(x-e_j)-u(x).
$$

For a given multi-index $\alpha=(\alpha_1,\cdots,\alpha_d)\in \Z^d$, $\alpha_j\geq 0$, we write $|\alpha| = \sum_{j=1}^d \alpha_j$ and $\nabla^\alpha=\nabla_1^{\alpha_1} \cdots \nabla_d^{\alpha_d}$. 

\subsection{Statement of main results}

We continue with the precise setup of the model. Let $d\geq 2$. For each $x\in \Z^d$ and $\om\in \Omega$, an underlying probability space, let $\mathbf{A}(x) = \mathbf{A}(x,\om)$ be an i.i.d.\ perturbation of the identity matrix $\mathbf{I}_d$, i.e.,
\beq\label{eq:choice}
\mathbf{A}(x,\om):=(1+\de\sigma(x,\om))\mathbf{I}_d,
\eeq
where $0<\de<1$ is a small parameter and $\{ \sigma(x,\cdot) \}_{x\in \Z^d}$ is a bounded family of real-valued i.i.d.\ random variables which we normalize by the condition $\norm{\sigma}_{L^\infty(\Z^d\times \Omega)}\leq 1$. (Our results extend to non-diagonal i.i.d.\ perturbations; see Remark \ref{rm:gen}.) The i.i.d.\ model \eqref{eq:choice} is perhaps the simplest possible choice for the random coefficient matrix. See \cite[Theorem 1.2]{CN2}, where a corresponding homogenization problem is addressed. 

Let $L$ be the corresponding i.i.d.\ perturbation of the Laplacian on $\Z^d$, i.e.,
\begin{equation}\label{eqn:L}
L := \nabla^*\mathbf{A} \nabla = -\Delta + \delta \nabla^* \sigma \mathbf{I}_d \nabla.
\end{equation}
We also write $L_\omega := \nabla^*\mathbf{A}(\cdot,\omega) \nabla$ to emphasize the dependence on $\omega\in \Omega$. The \emph{main object of our study} is the averaged operator $\mathcal{L}$ defined by
\begin{equation}\label{eq:curlL}
\mathcal{L} := \l(\mathbb E \l[ L_{\om} ^{-1} \r] \r)^{-1}
\end{equation}
for $d\geq 3$. Here the inverses of $L_{\om}$ and of $\mathbb E \l[ L_{\om} ^{-1} \r]$ are well-defined for any $0<\de<1$ as maps between appropriate function spaces; see Sections \ref{sec:spaces} and \ref{sec:derive} for the details.

The averaged operator $\curly{L}$ governs the average behavior of solutions: if $u_\om$ is the solution to $L_\om u_\om = f$, then $\mathcal{L}[\mathbb Eu]=f$. In terms of the Green's function, the Green's function for $\curly{L}$ is equal to the averaged Green's function $\E [G_{\mathbf{A}}(x,y)]$ discussed earlier. To make this precise, recall that, given any $y\in \Z^d$, the Green's function $G_{\mathbf{A(\cdot,\om)}}(x,y)$ for $L_\om$ is the unique solution $g_\om^y $ (in an appropriate $\ell^p$ space) to the equation $L_{\om} g^y_{\om} = \delta_y$. Similarly, let $G(x-y)$ be the Green's function for the translation invariant operator $\curly L$ characterized by $\mathcal{L} G = \delta_0$. Then we have
\begin{equation}\label{eq:Gr}
\E[G_{\mathbf{A(\cdot,\om)}}(x,y)] = \E [L_\om^{-1} \delta_y (x)] = \mathcal{L}^{-1} \delta_y(x) = G(x-y),
\end{equation}
where $L_\om^{-1} \delta_y$ and $\mathcal{L}^{-1} \delta_y$ are some $\ell^p$ functions. Further explanations are deferred to Section \ref{sec:spaces}.\\

By introducing several novel techniques from harmonic analysis to the problem, Bourgain \cite{B} recently established the remarkable result that the operator $\mathcal{L}$ can be expressed as a convergent perturbation series for sufficiently small $\de>0$, and it admits the representation 
\beq\label{eq:Bexpansion}
\curly{L}=(1+\delta \E\sigma)(-\Delta) +  \nabla^* \mathbf{K}^{\delta} \nabla.
\eeq
Here, $\E\sigma\in \R$ is the expectation of any copy of $\sigma(x,\cdot)$ and $\mathbf{K}^{\delta}=( K_{i,j}^{\delta} )_{1\leq i,j\leq d}$ is an operator-valued matrix whose matrix elements $K_{i,j}^{\delta}$ are convolution operators on $\Z^d$. Bourgain also proved that the convolution kernel $K_{i,j}^{\delta}(x)$ is controlled by the decaying function $(1+|x|)^{-2d+\eps}$, with $\eps>0$ depending on $\de$. This implies that its Fourier transform $\widehat{K^\delta_{i,j}}$ belongs to the H\"{o}lder space $C^{d-1,1-\eps}(\T^d)$. 

The main result of this paper is the following improved decay estimate for the convolution kernel.

\begin{thm}[Main result]\label{thm:main} Let $d\geq 3$ and $0<\epsilon<1$. There is a constant $c_d>0$ such that the representation \eqref{eq:Bexpansion} is valid for any $0<\delta<c_d \epsilon$. Moreover, the convolution kernel of $K^{\delta}_{i,j}$ obeys the decay estimate
\beq\label{eq:main}
 |K^{\delta}_{i,j}(x-y)| \leq C_d \delta^2 (1+|x-y|)^{-3d+\epsilon}
 \eeq
for all $x,y\in \Z^d$, with an additional factor of $\delta^2$ when $x\neq y$. 

Consequently, the Fourier transform $\ft{K^{\delta}_{i,j}}$ is an element of the H\"older space $C^{2d-1,1-\eps}(\T^d)$; in particular, it has $2d-1$ continuous derivatives.
\end{thm}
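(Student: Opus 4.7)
My plan is to work from the Feshbach--Schur perturbation series representation of $\curly{L}$ introduced by Bourgain and recalled in Section~\ref{sec:pre}. The natural starting point is the Neumann expansion
\beqs
L_\om^{-1} = (-\Del)^{-1} \sum_{n\geq 0} \l( -\de\, \nabla^* \sig \nabla\, (-\Del)^{-1} \r)^n,
\eeqs
whose expectation is reorganized, using the i.i.d.\ structure to cluster the random factors $\sig(x_j,\om)$ according to partition diagrams. Inverting $\E[L_\om^{-1}]$ and extracting $(1+\de\E\sig)(-\Del)$ leaves a divergence-form remainder $\nabla^* \mb{K}^\de \nabla$, so that each matrix element $K^\de_{i,j}(x-y)$ admits an explicit series expansion in which the $n$-th term is an $n$-fold sum over diagrams of products of (derivatives of) the free Green's function $(-\Del)^{-1}$.

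The core task is then to bound and sum these diagrams. The off-diagonal gain of an extra $\de^2$ for $x\neq y$ should be a direct consequence of independence: the two-point diagram (two $\sig$-factors, pinned by independence to the same site) is purely diagonal, so any off-diagonal contribution must involve at least four $\sig$'s and hence carries $\de^4$. To promote Bourgain's pointwise decay $(1+|x-y|)^{-2d+\eps}$ to $(1+|x-y|)^{-3d+\eps}$, I would refine the weighted $L^p$-bounds for the relevant convolution operators, in their abstract form from Section~\ref{sec:pre}, so as to extract one additional unit of spatial decay per diagram. Recall that each $\nabla$ acting on the discrete fundamental solution upgrades decay from $|x|^{2-d}$ to $|x|^{1-d}$, so a sharper accounting of the derivatives already distributed through a diagram should supply the missing factor $(1+|x-y|)^{-d}$; this is the point at which new harmonic-analytic input beyond \cite{B} is required.

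The chief difficulty is that the number of admissible diagrams at order $n$ grows combinatorially, so any per-diagram refinement must remain at most polynomial in $n$ or summability is destroyed. This forces the use of \emph{sharp} weighted inequalities of the form $\norm{\jap{x}^{3d-\eps}\, K^{(n)}_{i,j}}_{\ell^\infty} \les C^n$, rather than iterated applications of scalar bounds. Once the refined kernel estimate is in place, summability of the series for $0<\de<c_d\eps$ follows by the same scheme as in \cite{B}, and the H\"older statement $\ft{K^\de_{i,j}} \in C^{2d-1,1-\eps}(\T^d)$ is a standard Fourier consequence of \eqref{eq:main}: the decay $\jap{x}^{-3d+\eps}$ makes $\jap{x}^{2d-1} K^\de_{i,j}(x) \in \ell^1(\Z^d)$ and thus supplies $2d-1$ continuous derivatives of the Fourier transform, while the residual factor $\jap{x}^{1-\eps}$ accounts for the H\"older modulus of the top derivative.
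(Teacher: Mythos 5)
Your high-level scheme --- expand $\mathbf{K}^\de$ in a convergent series, prove a per-order pointwise bound of the form $C^n\jap{x-y}^{-3d+\eps}$, and then deduce the H\"older regularity of $\ft{K^\de_{i,j}}$ by standard Fourier arguments --- matches the paper. The Fourier step at the end and the observation that the bound must be $C^n$ rather than anything superexponential are both correct. However, there is a genuine gap at the heart of the proposal: the mechanism you propose for upgrading $-2d+\eps$ to $-3d+\eps$ is not viable, and it is not what the paper does.

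You suggest obtaining the extra $\jap{x-y}^{-d}$ by ``sharper accounting of the derivatives already distributed through a diagram'' and refined weighted Calder\'on--Zygmund estimates. But the propagator $\mathbf{K}=\nabla(-\De)^{-1}\nabla^*$ already carries both gradients and already decays like $\jap{x}^{-d}$; there are no unused derivatives left to exploit. Moreover, Bourgain's deterministic bound (Lemma~\ref{lm:bourgain}) already achieves the sharp rate $\jap{x_0-x_n}^{-d+\eps}$ for the full $n$-fold composition of such operators, and no weighted-$L^p$ refinement will push a \emph{deterministic} composition of singular integrals below the decay of a single kernel. The extra decay must come from the randomness. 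The paper's mechanism is the following: after Bourgain's dyadic decomposition by the longest segment (scale $R\gtrsim|x_0-x_n|/n$) and the first use of the vanishing of reducible paths, one is reduced to paths with a coincidence $x_{j_1}=x_{j_2}$ straddling the long segment. Lemma~\ref{lm:triangle} --- the key new geometric input --- then observes that because $x_{j_1}=x_{j_2}$, the triangle inequality forces a \emph{second} long segment of length $\gtrsim r/n=|x_0-x_n|/n$ inside the outer pieces of the path. One then discards reducible paths a \emph{second} time (Propositions~\ref{prop:Skey}, \ref{prop:red2}) and applies the deterministic bound once more, now with two pinned long propagators each contributing a factor $\sim R^{-d}$, $r^{-d}$; these two factors, together with the base $\jap{x_0-x_n}^{-d+\eps}$ from the singular-integral chain, produce $\jap{x_0-x_n}^{-3d+\eps}$ after summing over the dyadic scales.

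A second, related omission: to control the combinatorics at the rate $C^n$, the paper must pass from the ``unprimed'' sets $S_{j_1,j_2}\cap S_{k_0}\cap S_{k_1,k_2}$ (where the deterministic bound is proved) to the ``primed'' disjointified sets $S'_{j_0,j_1,j_2}\cap S_{k_0}\cap S'_{k_0,k_1,k_2}$ (which actually partition the surviving paths). This is done by Bourgain's disjointification trick, Lemma~\ref{lm:bourgain2}, based on Steinhaus systems and the Markov brothers' inequality; it is applied here to a larger collection of disjointness conditions than in~\cite{B}. Your proposal does not engage with this step at all, yet it is exactly what prevents a union bound from destroying summability. Finally, two smaller points: starting from the raw Neumann series of $L_\om^{-1}$ and ``inverting $\E[L_\om^{-1}]$'' reintroduces messy diagram combinatorics that the Feshbach--Schur formula \eqref{eq:series} is designed to avoid; and the argument for the off-diagonal $\de^4$ gain should really address the $n=2$ term (three factors of $\sigma$), which vanishes for $x\neq y$ because every such path is reducible, not merely because ``the two-point diagram is diagonal.''
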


\be{rmk}\label{rmk:main}
\be{itemize}
\item[(i)] The exponent $-3d+\epsilon$ in \eqref{eq:main} improves the exponent $-2d+\epsilon$ obtained in \cite{B}. Theorem \ref{thm:main} also yields additional factors of $\de$ and quantifies the dependence on $\eps$ for the allowed range of $\delta$. However, it is an interesting open question whether this dependence on $\eps$ can be completely removed.

\item[(ii)] Our work is motivated by a conjecture of Tom Spencer (private communication), which says that $-3d$ should be the optimal decay rate in \eqref{eq:main}. Note that our bound \eqref{eq:main} establishes the conjecture up to an arbitarily small $\eps>0$. The conjecture is supported by an examination of the $n=3$ term in the perturbation series \eqref{eq:series}, which is the leading contribution in $\de$ when $x\neq y$.

\e{itemize}
\e{rmk}


Our proof yields a similar result for some regularized versions of $L_{\om}$, with bounds that are uniform in the regularizing parameter and in this case one can include $d=2$. For instance, define the operator $L_{\mu,\om} := L_\om + \mu I$ for each $\mu>0$. It is strictly positive and therefore invertible on $\ell^2(\Z^d)$. We refer the reader to \cite[Lemma 4]{GloriaOtto1} for a pointwise decay estimate for the Green's function of $L_{\mu,\om}$. We state a version of Theorem \ref{thm:main} for the averaged operator $\curly{L}_\mu := \l(\mathbb E \l[ L_{\mu,\om}^{-1} \r] \r)^{-1}$. Note that the Green's function $G_\mu(x-y)$ for the operator $\curly{L_\mu}$ is the averaged Green's function associated with the operators $L_{\mu,\om}$; \emph{cf.} \eqref{eq:Gr}. 

\begin{thm}\label{thm:mainreg} Let $d\geq 2$, $\mu>0$ and $0<\epsilon<1$. There is a constant $c_d>0$ such that, any $0<\delta<c_d \epsilon$, we may write 
\beq\label{eq:Bexpansion2}
\curly{L}_\mu =   (1+\delta \E\sigma)(-\Delta) +\mu I +  \nabla^* \mathbf{K}_\mu^{\delta} \nabla
\eeq
for a convolution operator $\mathbf{K}_\mu^{\delta}= (K^{\delta}_{\mu,i,j})_{ 1\leq i,j \leq d}$. The convolution kernel of $K^{\delta}_{\mu,i,j}$ obeys the decay estimate
\beq\label{eq:main2}
 |K^{\delta}_{\mu,i,j}(x-y)| \leq C_d \delta^2 (1+|x-y|)^{-3d+\epsilon}
 \eeq
uniformly in $\mu>0$ for all $x,y\in \Z^d$, with an additional factor of $\delta^2$ when $x\neq y$. Consequently, the Fourier transform $\ft{K^{\delta}_{\mu,i,j}}$ is an element of the H\"older space $C^{2d-1,1-\eps}(\T^d)$.
\end{thm}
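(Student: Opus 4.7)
The plan is to reduce Theorem \ref{thm:mainreg} to the proof of Theorem \ref{thm:main} by treating the mass parameter $\mu$ as a harmless, $\mu$-uniform perturbation of the free part. I would rewrite
\[
L_{\mu,\om} = (-\Delta+\mu I) + \delta\nabla^* \sigma \mathbf{I}_d \nabla,
\]
and set up the Feshbach--Schur perturbation series for $\mathcal{L}_\mu$ exactly as in the proof of Theorem \ref{thm:main}, but with every occurrence of the free Green's function $(-\Delta)^{-1}$ replaced by $(-\Delta+\mu I)^{-1}$. Algebraically the series is identical, so the low-order terms assemble into $(1+\delta\E\sigma)(-\Delta)+\mu I$, and the remainder can be collected into a gradient--gradient form $\nabla^* \mathbf{K}_\mu^\delta \nabla$ using the same manipulation as in the proof of \eqref{eq:Bexpansion}, since the stochastic perturbation $\delta\nabla^*\sigma\mathbf{I}_d\nabla$ already has this form.

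The key observation that delivers $\mu$-uniformity is that the Fourier symbol
\[
\widehat{(-\Delta+\mu I)^{-1}}(\xi) = \frac{1}{|\widehat{\nabla}(\xi)|^2+\mu}
\]
is \emph{pointwise} dominated, uniformly in $\mu>0$, by $|\widehat{\nabla}(\xi)|^{-2}$, the symbol of $(-\Delta)^{-1}$. Hence every Fourier multiplier estimate, Schur bound, and paraproduct-type decomposition used in the proof of Theorem \ref{thm:main} to control the series terms applies to the $\mu$-regularized operators with the \emph{same} constants. The convergence criterion $0<\delta<c_d\eps$ therefore carries over without change.

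To obtain \eqref{eq:main2} I would repeat verbatim the spatial decay argument of Theorem \ref{thm:main}, tracking where the free resolvent enters each estimate and invoking the pointwise symbol bound above to replace any estimate on $(-\Delta)^{-1}$ by the corresponding one on $(-\Delta+\mu I)^{-1}$. Since the $\mu$-regularized resolvent actually has \emph{better} decay (exponentially decaying kernel at scale $\mu^{-1/2}$), this costs nothing. The extra factor of $\delta^2$ for $x\neq y$ comes from the same parity/cancellation observation that the $n=1,2$ terms in the perturbation series vanish off the diagonal. The H\"older statement for $\ft{K_{\mu,i,j}^{\delta}}$ then follows from the pointwise decay \eqref{eq:main2} in the standard way.

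The inclusion of $d=2$ is in fact a bonus of the regularization rather than an obstacle: the only reason $d\ge 3$ is required in Theorem \ref{thm:main} is the non-integrability of $(-\Delta)^{-1}$ at $\xi=0$ (equivalently at $|x|\to\infty$), and this singularity is entirely removed by $\mu>0$. The main obstacle in writing the argument rigorously will be the bookkeeping needed to confirm that \emph{every} harmonic-analytic estimate in the proof of Theorem \ref{thm:main} is $\mu$-stable; once the symbol domination is invoked uniformly throughout, no new ideas should be needed, but one must check that no intermediate step depends qualitatively on $\mu=0$ (for instance, by exploiting homogeneity of the free symbol) in a way that would spoil uniformity.
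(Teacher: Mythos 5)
Your approach is the same as the paper's: the paper's entire proof of Theorem \ref{thm:mainreg} is the one-line statement that it is identical to that of Theorem \ref{thm:main} modulo replacing $-\Delta$ by $-\Delta_\mu := -\Delta + \mu I$ throughout, and your proposal fleshes out exactly that replacement. The structural claims you make are all correct: the Feshbach--Schur algebra goes through unchanged (one checks that the operator $M_{\mu,\om} = \Lambda_\mu^{-1}\nabla^*\sigma\nabla\Lambda_\mu^{-1}$ still has $\ell^2$-norm $\leq 1$, since $\nabla^*\nabla = -\Delta \leq -\Delta_\mu$), the $n=1,2$ cancellation giving the extra $\delta^2$ is unaffected, and the reason $d=2$ becomes admissible is indeed the removal of the low-frequency singularity.

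However, the single piece of reasoning you single out as the ``key observation'' --- that pointwise domination of the Fourier symbol $(|\widehat\nabla(\theta)|^2+\mu)^{-1}\leq |\widehat\nabla(\theta)|^{-2}$ transfers \emph{every} estimate with the same constants --- is not sound as stated. Pointwise majorization of symbols controls $\ell^2$ operator norms via Plancherel, but it says nothing about the $\ell^p$ bounds for $p\neq 2$ in Assumption (ii) of Lemma \ref{lm:bourgain}, nor about the pointwise kernel decay $|K(x)|\lesssim\br{x}^{-d}$ in Assumption (i). Both of those require control of \emph{derivatives} of the symbol, not just its size. What actually delivers $\mu$-uniformity is that the modified symbol
\[
m_\mu(\theta)\;=\;\frac{(e^{i\theta_i}-1)(e^{-i\theta_j}-1)}{\sum_{k}2(1-\cos\theta_k)+\mu}
\]
satisfies the Mihlin--H\"ormander (Calder\'on--Zygmund) conditions $|\partial^\alpha m_\mu(\theta)|\leq C_\alpha |\theta|^{-|\alpha|}$ with constants independent of $\mu>0$; adding $\mu$ to the denominator only improves every such bound. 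This is a short direct check, so the overall argument is fine --- but the justification should invoke uniform Calder\'on--Zygmund bounds rather than pointwise symbol comparison, since the latter is a genuinely weaker (and here insufficient) statement. You partially flag this yourself in your last paragraph, but the earlier ``key observation'' sentence overstates what pointwise domination buys.
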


We omit the proof of Theorem \ref{thm:mainreg} since it is identical to the proof of Theorem \ref{thm:main} modulo the replacement of the positive operator $-\Delta$ by the strictly positive operator $-\Delta_\mu := -\Delta + \mu I$ in each step of the proof.

\be{rmk}\label{rm:gen}
It is straightforward to generalize Theorems \ref{thm:main} and \ref{thm:mainreg} for the coefficient matrix of the form 
\beq\label{eq:choice2}
\mathbf{A}(x,\om):=\mathbf{A}_0 + \de \mathbf{\Sigma}(x,\om).
\eeq
Here, $\mathbf{A}_0$ is a positive definite matrix satisfying
\[ \sum_{1\leq i,j\leq d}  a_i (\mathbf{A}_0)_{i,j} \overline{a_j} \geq c \sum_{ 1\leq i\leq d} |a_i|^2 \]
for some constant $c>0$ for any $a_i \in \C$ and $\mathbf{\Sigma}(x,\om) = (\sigma_{i,j}(x,\om))_{i,j}$ is a symmetric $d\times d$ matrix and \[ \setof{\sigma_{i,j}(x,\cdot)}{1\leq i,j\leq d,\,\, x\in \Z^d}\] is a family of identically distributed random variables satisfying the following independence condition: for any $1\leq i,j,i',j'\leq d$, the random variables $\sigma_{i,j}(x,\cdot)$ and $\sigma_{i',j'}(y,\cdot)$ are independent if $x\neq y$. (If complex-valued functions $u:\Z^d\to \C$ are considered, then it is necessary to restrict to Hermitian matrices $\Sigma$, in order to ensure ellipticity.)
In this setting, one finds
$$
\curly{L}= \nabla^* \E \mathbf{A} \nabla  +  \nabla^* \mathbf{K}^{\delta} \nabla
$$
and the operator kernel of $\mathbf{K}^{\delta}$ satisfies the bound \eqref{eq:main}.
\e{rmk}

As a corollary of Theorems \ref{thm:main} and \ref{thm:mainreg}, we establish decay estimates for the discrete derivatives of the averaged Green's functions $G(x-y)$ and $G_\mu(x-y)$ for the operators $L_{\om}$ and $L_{\mu,\om}$, respectively. These estimates extend the result \eqref{eq:DD} from \cite{CN, DD} to higher order derivatives for our choice of random environment. 

\be{cor}[Bounds on the averaged Green's function]\label{cor:GF}
There is a constant $c_d>0$ such that the following holds for any $0<\delta<c_d$.
\be{enumerate}[label=(\roman*)]
\item 
If $d\geq 3$, then
\[ |\nabla^\alpha G(x)| \leq C_\alpha (1+|x|)^{-(d-2+|\alpha|)} \]
holds for any multi-index $0\leq |\alpha | \leq d+1$. Similar estimates hold for $\nabla^\alpha G_\mu$ uniformly in $\mu>0$. 
\item
If $d= 2$, then 
\[ |\nabla^\alpha G_\mu(x)| \leq C_\alpha (1+|x|)^{-|\alpha|} \]
uniformly in $\mu >0$ for any multi-index $ 1 \leq |\alpha | \leq 3$.
\e{enumerate}
\e{cor}

As a consequence of Theorem \ref{thm:main} and Corollary \ref{cor:GF}, we obtain an estimate on the derivatives of the averaged solution $\mathbb{E} [u_\om]$ to $L_\om u_\om = f$.
\begin{cor} \label{cor:GF2} Let $d\geq 3$ and let $p_d$ be the Hardy-Littlewood-Sobolev exponent, i.e., $p_d^{-1}=2^{-1}+d^{-1}$. Assume that $f\in \ell^{p_d}(\Z^d)$. For each $\om\in \Omega$, there exists a unique solution $u_\om \in \ell^{q_d}(\Z^d)$, $q_d^{-1} := 2^{-1}-d^{-1}=p_d'$, to \[L_\om u_\om = f \] such that $|\nabla u_\om| \in \ell^2(\Z^d)$. The averaged solution can be represented by
\[ \mathbb{E} [u_\om] = \curly{L}^{-1} f = G*f. \]
Moreover, there is a constant $c_d$ such that for any $0<\delta<c_d$, the derivatives of the average can be estimated pointwise by
\[ |\nabla^{\alpha} \mathbb{E} [u_\om](x) | \leq C \sum_{y\in \Z^d} \frac{|f(y)|}{(1+|x-y|)^{d-2+|\alpha|}}. \]
for any multi-index $0\leq |\alpha | \leq d+1$.
\end{cor}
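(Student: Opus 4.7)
The plan is to decompose the statement into three pieces: (i) well-posedness of the random equation $L_\om u_\om = f$ in a suitable Sobolev-type space; (ii) the averaged representation $\mathbb{E}[u_\om] = \curly{L}^{-1} f = G * f$; and (iii) the pointwise derivative estimate, which will follow immediately from Corollary \ref{cor:GF} once (ii) is in place.

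For (i), I would invoke Lax--Milgram on the discrete homogeneous Sobolev space $\dot{H}^1(\Z^d) := \{u : |\nabla u| \in \ell^2(\Z^d)\}$ (modulo constants). The bilinear form $a_\om(u,v) := \sum_x \nabla u(x) \cdot \mathbf{A}(x,\om) \nabla v(x)$ is bounded and coercive with deterministic constants in $[1-\delta, 1+\delta]$, since $\norm{\sigma}_\infty \leq 1$ and $0 < \delta < 1$. The discrete Sobolev embedding $\norm{v}_{\ell^{q_d}} \les \norm{\nabla v}_{\ell^2}$, valid for $d \geq 3$ by Hardy--Littlewood--Sobolev applied to $(-\Delta)^{-1/2}$, together with H\"older's inequality, shows that $v \mapsto \sum_x f(x) v(x)$ is bounded on $\dot{H}^1$ whenever $f \in \ell^{p_d}$. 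Lax--Milgram then produces a unique $u_\om \in \dot{H}^1$ solving $L_\om u_\om = f$, and the embedding places $u_\om \in \ell^{q_d}$ with a deterministic bound $\norm{u_\om}_{\ell^{q_d}} \les \norm{f}_{\ell^{p_d}}$. Uniqueness in the class of the statement follows because any $\dot{H}^1$ solution of the homogeneous equation is constant by coercivity, and the only constant in $\ell^{q_d}$ is zero.

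For (ii), I would use the deterministic Aronson-type bound $|G_{\mathbf{A}(\cdot,\om)}(x,y)| \leq C |x-y|^{2-d}$, which holds uniformly in $\om$ by the pathwise ellipticity of $\mathbf{A}(\cdot,\om)$ (this is the discrete De~Giorgi--Nash--Moser estimate, see \cite{DD}). By linearity and uniqueness, $u_\om(x) = \sum_y G_{\mathbf{A}(\cdot,\om)}(x,y) f(y)$, with absolute convergence guaranteed by Hardy--Littlewood--Sobolev applied to the majorant $\sum_y |f(y)| |x-y|^{2-d}$. Fubini then justifies interchanging expectation and summation, and \eqref{eq:Gr} gives $\mathbb{E}[u_\om](x) = \sum_y G(x-y) f(y) = (G * f)(x) = \curly{L}^{-1} f(x)$. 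Step (iii) is then immediate: differentiating under the sum yields $\nabla^\alpha \mathbb{E}[u_\om](x) = \sum_y (\nabla^\alpha G)(x-y) f(y)$, and Corollary \ref{cor:GF} bounds each factor by $C_\alpha (1+|x-y|)^{-(d-2+|\alpha|)}$ for $|\alpha| \leq d+1$, so the triangle inequality delivers the stated pointwise estimate.

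The main obstacle is step (i): although Lax--Milgram is standard, one must verify the discrete Sobolev embedding carefully and check that the weak solution produced really coincides pointwise with $L_\om u_\om = f$. Step (ii) is then mostly bookkeeping once the classical Aronson bound on $G_{\mathbf{A}(\cdot,\om)}$ is in hand, and step (iii) is an immediate corollary of the main results of the paper.
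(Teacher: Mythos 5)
Your steps (i) and (iii) match the paper's: step (i) is essentially Proposition \ref{prop:inverse} rephrased via Lax--Milgram instead of the explicit factorization $L_\om=\Lambda^1(I+\delta M_\om)\Lambda$ with a Neumann series, and step (iii) is word-for-word the paper's (apply Corollary \ref{cor:GF} under the convolution). The genuinely different piece is step (ii), where you verify $\mathbb{E}[u_\om]=G*f=\curly{L}^{-1}f$. The paper proceeds in the opposite order: it reads off $\mathbb{E}[u_\om]=\curly{L}^{-1}f$ directly from the operator identity $\curly{L}^{-1}=\mathbb{E}[L_\om^{-1}]$, and then proves $\curly{L}^{-1}f=G*f$ by checking $\curly{L}(G*f)=f$, which reduces to commuting the (infinite) convolution with $\nabla_i^* K^\delta_{i,j}\nabla_j$. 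The ingredients there are $G,\nabla_j G\in\ell^{q_d}$ (Corollary \ref{cor:GF}) and the fact that $K^\delta_{i,j}$ has $\ell^1$ kernel (Theorem \ref{thm:main}, since $-3d+\eps<-d$), so everything stays inside the paper's own estimates. You instead invoke the pathwise Aronson bound $|G_{\mathbf{A}(\cdot,\om)}(x,y)|\lesssim |x-y|^{2-d}$, i.e., the discrete De~Giorgi--Nash--Moser theory, to write $u_\om(x)=\sum_y G_{\mathbf{A}(\cdot,\om)}(x,y)f(y)$ and then average directly. That route is valid and arguably more probabilistically transparent, but it imports a substantial external regularity estimate that the paper deliberately does not rely on. Two points in your write-up deserve more care: (a) the passage ``by linearity and uniqueness, $u_\om(x)=\sum_y G_{\mathbf{A}(\cdot,\om)}(x,y)f(y)$'' is immediate only for compactly supported $f$; for general $f\in\ell^{p_d}$ it needs a density-plus-continuity argument (e.g.\ using the uniform $\ell^{p_d}\to\ell^{q_d}$ bound on $L_\om^{-1}$); and (b) the final equality ``$(G*f)(x)=\curly{L}^{-1}f(x)$'' in your chain reads as an assertion rather than a conclusion --- it should be isolated as its own step, derived either from the paper's commutation argument or from $\mathbb{E}[L_\om^{-1}f]=\mathbb{E}[L_\om^{-1}]f=\curly{L}^{-1}f$ (Fubini, justified by the uniform operator bounds of Proposition \ref{prop:inverse}) combined with your $\mathbb{E}[u_\om]=G*f$.
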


We prove Corollaries \ref{cor:GF} and \ref{cor:GF2} in the Appendix. For the former, we use that the Fourier space representations of $\nabla^\alpha G$ and $\nabla^\alpha G_\mu$ can be controlled via Theorems \ref{thm:main} and \ref{thm:mainreg}.


In the next subsection, we sketch the proof of Theorem \ref{thm:main}. This ultimately motivates an alternative approach towards Theorem \ref{thm:main}. We raise a question regarding that approach and partially answer the question by our second main result, Theorem \ref{thm:nlogn}.

In the following, we commonly abuse notation and identify operators with their kernels, i.e., we do not distinguish notationally between a function $K: \Z^d \times \Z^d \to \C$ and the operator $Kf(x) = \sum_{y\in \Z^d} K(x,y) f(y)$. From now on, $C$ denotes a positive constant that is uniform in all the parameters except dimension and whose numerical value may change from line to line.


\subsection{Outline of the argument} 
 \label{sect:summary}
Our proof of Theorem \ref{thm:main} relies on the techniques introduced in \cite{Sigal,B}. We start by introducing Bourgain's approach and then briefly explain how we can refine that argument.

The starting point is the Feshbach-Schur map that yields the perturbation series representation for $\mathbf{K}^\de$
\beq\label{eq:series}
\mathbf{K}^\de=\delta \sum_{n=1}^\it (-\de)^n P \sigma (\mathbf{K} P^\perp \sigma)^n,
\eeq
where $P=\mathbb E[\cdot]$ and $P^\perp=I-P$ and we introduced the operator-valued matrix $\mathbf{K}:=\nabla (\De)^{-1}\nabla^*$. Here we emphasize that by the operator $P^\perp \sigma$ we mean the \emph{composition} of $P^\perp$ and the multiplication operator associated with $\sigma \mathbf{I}_d$. See \cite{B} (and also Section \ref{sec:derive}) for the derivation of \eqref{eq:series}.

Note that each entry of the matrix $\mathbf K$ is a \emph{singular integral operator} of convolution type. However, the reader is invited to think of $\mathbf{K}$ as a usual \emph{scalar} singular integral operator acting on $\Z^d$. See the beginning of Section \ref{sec:bourgain} for a discussion on such operators. 

Our key result, Proposition \ref{prop:key}, says that 
\begin{equation}\label{eqn:key}
|P \sigma (\mathbf{K} P^\perp \sigma)^n(x,y)| \leq \eps^3 \l(\frac{C}{\eps}\r)^n \jap{x-y}^{-3d+\eps} ,
\end{equation}
where we write $\langle\cdot\rangle=1+|\cdot|$. This shows that the series in \eqref{eq:series} is convergent for sufficiently small $\delta>0$ and, therefore, implies Theorem \ref{thm:main}. 

To show \eqref{eqn:key}, one writes the kernel of $P \sigma (\mathbf{K} P^\perp \sigma)^n$ as
\beq\label{eq:tobound}
\begin{aligned}
&P \sigma (\mathbf{K} P^\perp \sigma)^n(x_0,x_n)\\
=&P\sum\limits_{\ul{x}\in (\Z^d)^{n-1}} \sigma(x_0) \mathbf{K}(x_0-x_1) P^\perp \sigma(x_1) \ldots \mathbf{K}(x_{n-1}-x_n) P^\perp \sigma(x_n),
\end{aligned}
\eeq
where we denote by $\ul{x}$ the vector $(x_1,x_2,\ldots,x_{n-1})$. We interpret this expression as a sum over \emph{paths} $\ul{x}$ in $\Z^d$ connecting $x_0$ to $x_n$. Along these paths, one evaluates the random variables $\sigma(x_k,\cdot)$. In between two such evaluations, one uses the (matrix-valued) ``propagator'' $\mathbf K(x_k-x_{k+1})$ to travel from site to site.

The first idea is then to treat \eqref{eq:tobound} as a composition of deterministic operators. For each $1\leq j\leq n$, define $K^j(x,y):=K_j(x-y)b_j(y)$ for a singular integral operator $K_j$ and a bounded function $b_j$ on $\Z^d$. For a given subset $S\subset (\Z^d)^{n-1}$, define the deterministic operator
\begin{equation}\label{eq:tns}
T^n_S(x_0,x_n)= \sum_{\ul{x}\in S} K^1(x_0,x_1)K^2(x_1,x_2)\ldots K^n(x_{n-1},x_n).
\end{equation}
Note that $T^n_{(\Z^d)^{n-1}}(x_0,x_n)$ is the kernel of the operator $K^1 K^2 \cdots K^n$. After replacing $P^\perp$ by $I - P$ in \eqref{eq:tobound}, we may control \eqref{eq:tobound} by a bound for $T^n_{(\Z^d)^{n-1}}(x_0,x_n)$. (In the application to \eqref{eq:tobound}, $K_j$ is a matrix element of $\mathbf{K}=\nabla (\De)^{-1}\nabla^*$ and $b$ is a realization of the random variable $\sigma$.) It follows from \cite[Lemma 1]{B} (see also Lemma \ref{lm:bourgain}) that
\beq\label{eq:lm1}
 |T^n_{(\Z^d)^{n-1}}(x_0,x_n)| \leq \epsilon \l(\frac{C}{\eps}\r)^n \br{x_0-x_n}^{-d+\epsilon},
\eeq
with the decay rate of $-d+\epsilon$. 

This shows that the deterministic estimate \eqref{eq:lm1} is not enough -- the randomness must be utilized. In order to discuss the role of randomness and the projection operators $P^\perp$ in the sum \eqref{eq:tobound}, we define 
\begin{defn}[Reducible paths]\label{defn:reducible}
Let $n\geq 2$ and fix $x_0,x_n\in \Z^d$  with $x_0\neq x_n$. We say that $\ux=(x_1,x_2,\ldots,x_{n-1})\in(\Z^d)^{n-1}$ is a reducible path (from $x_0$ to $x_n$) if there exists $0\leq j<n$ such that \[ \{ x_0,\ldots,x_j \}\cap \{ x_{j+1}, \ldots, x_n \} = \emptyset. \]
Otherwise we say that $\ux$ is an irreducible path. 
\end{defn}

The importance of this notion stems from the fact that we may discard \emph{any} portion of reducible paths $\ul{x}$ from the summation in \eqref{eq:tobound}. Indeed, if $\ul{x}=(x_1,\ldots,x_{n-1})$ is a reducible path (from $x_0$ to $x_n$), then
\begin{equation}\label{eqn:cancellation}
P\sigma(x_0,\cdot)P^\perp\sigma(x_1,\cdot)P^\perp\sigma(x_2,\cdot)\ldots P^\perp\sigma(x_n,\cdot)=0.
\end{equation}
This fundamental vanishing property follows from the assumption that the random variables are independent. In principle, this is a promising observation because it allows one to discard terms from the summation in \eqref{eq:tobound}. In effect, the sum over $(\Z^d)^{n-1}$ in \eqref{eq:tobound} can be replaced by one over appropriate subsets $S\subset (\Z^d)^{n-1}$. The discarding of reducible paths is the only way in which the randomness is utilized. Afterwards, the remaining task is to bound the deterministic quantity $T_S^n(x_0,x_n)$ for the selected $S\subset (\Z^d)^{n-1}$.

This touches upon a central, but subtle, issue: Precisely which reducible paths should be discarded from the sum \eqref{eq:tobound} (in other words, which $T_S^n(x_0,x_n)$ one should aim to bound) is not at all clear a priori. To avoid confusion, we emphasize that once one has reduced matters to the deterministic quantity $T^n_S(x_0,x_n)$, one may no longer drop reducible paths. Moreover, $T^n_S(x_0,x_n)$ does not depend on $S$ in a monotone way. In fact, the summation involves significant cancellations due to the presence of the singular integral operators and one should avoid taking absolute values inside the sum if possible. (From this perspective, Bourgain's deterministic bound \eqref{eq:lm1} is already non-trivial; see Remark \ref{rmk:fubini}.) To summarize, the \emph{main technical difficulty} is the delicate matter of bounding the oscillatory object $T^n_S(x_0,x_n)$ for a subset $S$ obtained by discarding an appropriate subset of reducible paths. 

Bourgain uses the vanishing property in the following way. By a dyadic decomposition, one may focus on the sum in \eqref{eq:tobound} over paths $\ux$ such that, for some fixed $0\leq j_0 < n$, the length of their longest segment $\max_{0\leq j<n}|x_j - x_{j+1}|$ is equal to $|x_{j_0}-x_{j_0+1}|$ and is comparable to $R$ for some large $R>0$. Let $S_{j_0}$ be the collection of such paths. Next, using the identity \eqref{eqn:cancellation}, he discards from $S_{j_0}$ exactly those reducible paths where the sub-paths $(x_0,\ldots,x_{j_0})$ and $(x_{j_0+1},\ldots,x_{n})$ are not connected. In other words, Bourgain only keeps paths in the set
\beq\label{eq:Sintro}
\tilde{S}_{j_0}:=\bigcup_{j_1\leq j_0<j_2}S_{j_1,j_2}, \;\text{ where} \;\; S_{j_1,j_2}:=\{ \ux \in  S_{j_0} : x_{j_1} = x_{j_2} \}.
\eeq

Thanks to \eqref{eq:lm1} and the structure of $S_{j_1,j_2}$, it is possible to control the sum \eqref{eq:tobound} restricted to the subset $S_{j_1,j_2}$ by 
\begin{equation}\label{eq:lm2}
|T^n_{S_{j_1,j_2}}(x_0,x_n)|\leq \epsilon^2 \l(\frac{C}{\eps}\r)^n R^{-d} \br{x_0-x_n}^{-d+\epsilon}. 
\end{equation}
Since $R\geq C |x_0-x_n|/n$, this already shows the decay rate of $-2d+\epsilon$ obtained in \cite{B}. However, a key point is that the union in \eqref{eq:Sintro} is \emph{not disjoint} and therefore a bound on the individual $T^n_{S_{j_1,j_2}}$ does not directly imply a bound on $T^n_{\tilde{S}_{j_0}}$. (We emphasize that this issue is a consequence of the oscillatory nature of the problem. If the definition \eqref{eq:tns} of $T^n_S$ would only involve positive terms, this step would follow by a simple union bound.) This a priori serious technical problem is solved in a highly original way in \cite{B} by introducing Steinhaus systems and appealing to the Markov brothers' inequality for polynomials. We call this as ``Bourgain's disjointification trick'' and abstract it to Lemma \ref{lm:bourgain2}. Altogether, Bourgain's argument gives the decay rate $-2d+\eps$.

 
 
Our improved decay rate starts with a simple observation: for each path $\ux=(x_1,\cdots,x_{n-1}) \in S_{j_1,j_2}$, we have 
\[ |x_0-x_n| \leq |x_0- x_{j_1}| + |x_{j_2}-x_n|\]
by the triangle inequality and $x_{j_1} = x_{j_2}$. This observation implies that there exists another ``long" segment  among the sub-paths $(x_0,\ldots,x_{j_1})$ or $(x_{j_2},\ldots,x_{n})$. Exploiting this additional information, we further decompose the set $S_{j_1,j_2}$ and discard certain reducible paths using the identity \eqref{eqn:cancellation} once more. These steps amount to specifying even smaller subsets $S\subset (\Z^d)^{n-1}$ for which \eqref{eq:tns} is to be controlled. We show an improved bound for $T^n_S(x_0,x_n)$ using additional structures in $S$ and then obtain \eqref{eq:main}, i.e., the decay rate $-3d+\eps$.


\subsection{A related question and a partial result}\label{sec:question}
As described above, any successful argument has to negotiate how many reducible paths to discard from the summation -- because afterwards one needs to control $T^n_S(x_0,x_n)$ on the resulting set $S$ of paths. Bourgain implements the cancellation \eqref{eqn:cancellation} once in his argument and we implement it twice to prove Theorem \ref{thm:main}. 

Now, what happens if we discard \emph{all} the reducible paths from the outset? Our result in this direction, Theorem \ref{thm:nlogn}, succeeds almost in yielding another proof of Theorem \ref{thm:main} (up to a logarithm). 

Let $n\geq 2$ and fix $x_0,x_n\in\Z^d$ with $x_0\neq x_n$. We denote by $U=U_{x_0,x_n}$ the set of all irreducible paths from $x_0$ to $x_n$, i.e.,
\[ U := \{ \ux \in(\Z^d)^{n-1}: \{ x_0, \ldots, x_j \} \cap \{ x_{j+1}, \ldots, x_n \} \neq \emptyset, \text{ for any } 0\leq j < n \}.\] 
Note that $U_{x_0,x_2} = \emptyset$ when $n=2$, and $U_{x_0,x_3} = \{ (x_3,x_0) \}$ when $n=3$. The set $U_{x_0,x_n}$ becomes more complicated when $n\geq 4$.  

Note that by the vanishing property, \eqref{eqn:cancellation}, we have
\begin{equation}\label{eqn:irreduible}
\begin{aligned}
&P \sigma (\mathbf{K} P^\perp \sigma)^n(x_0,x_n)\\
=&P\sum\limits_{\ul{x}\in U} \sigma(x_0) \mathbf{K}(x_0-x_1) P^\perp \sigma(x_1) \ldots \mathbf{K}(x_{n-1}-x_n) P^\perp \sigma(x_n).
\end{aligned}
\end{equation}
Each matrix element of the right-hand side of \eqref{eqn:irreduible} can be controlled by a sum of deterministic terms $T^n_U(x_0,x_n)$ defined in \eqref{eq:tns}; see Section \ref{sec:key}. 

Our second main result provides a non-trivial estimate for $T^n_U(x_0,x_n)$.
\begin{thm} \label{thm:nlogn} Let $n\geq 3$ and $x_0,x_n\in \Z^d$, $x_0\neq x_n$. Then there is an absolute constant $C>1$ such that
\[ |T^n_U(x_0,x_n)| \leq C^{n\log n} \epsilon^{3-n} \br{x_0 - x_n}^{-3d+\epsilon} \]
for all sufficiently small $\epsilon>0$.
\end{thm}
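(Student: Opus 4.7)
The proof extends the mechanism of Theorem \ref{thm:main} by recursively exploiting the full irreducibility condition defining $U$. First I would perform a dyadic decomposition on the maximal segment length $R = |x_{j_0} - x_{j_0+1}|$ of $\ux$, where $j_0$ labels the longest edge. The irreducibility of $\ux$ at position $j_0$ produces indices $j_1 \leq j_0 < j_2$ with $x_{j_1}=x_{j_2}$. The triangle inequality $|x_0 - x_n| \leq |x_0 - x_{j_1}| + |x_{j_2} - x_n|$ then forces one of the two sub-paths $(x_0,\ldots,x_{j_1})$ or $(x_{j_2},\ldots,x_n)$ to have total displacement comparable to $\jap{x_0 - x_n}$. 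Combining this with Bourgain's disjointification trick (Lemma \ref{lm:bourgain2}) applied to the constraint $x_{j_1}=x_{j_2}$ and with the deterministic estimate \eqref{eq:lm1} reproduces the bound \eqref{eq:lm2} with the $-2d+\epsilon$ rate, which is the baseline of the argument.

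To push the decay further to $-3d+\epsilon$, I would iterate this procedure on the sub-path that dominates the triangle inequality. Within that sub-path, the longest segment again lives at some scale $R'$, and one attempts to locate a crossing pair of $\ux$ that is fully contained in the sub-path and straddles the new longest segment. When such a crossing exists -- which follows from the irreducibility of $\ux$ together with additional case analysis -- one gains another $R^{-d}$-type factor, yielding the required $-3d+\epsilon$ decay. Because each recursive step roughly halves the length of the sub-path that remains to be treated, the recursion terminates in $O(\log n)$ levels. At every level, enumerating the positions $j_0,j_1,j_2$ and the disjointification parameters costs at most $C^n$, so multiplying across $O(\log n)$ levels yields the $C^{n\log n}$ constant. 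The factor $\epsilon^{3-n}$ reflects $n-3$ applications of the deterministic estimate \eqref{eq:lm1}, offset by three $\epsilon$-gains harvested through the $R^{-d}$ decay factors extracted along the recursion.

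The main technical difficulty is that sub-paths of irreducible paths need not themselves be irreducible: a crossing of the full path $\ux$ may straddle the boundary of the current sub-segment rather than lie within it. To iterate the extraction, one must carefully identify crossings that stay inside the active sub-segment and perform additional dyadic or combinatorial splits when no such crossing is available, falling back on the coarser deterministic bound for the bad cases. I expect this bookkeeping to be the source of the additional $\log n$ loss in the exponent, beyond the $C^n$ per level that a single extraction would incur. The base of the recursion, when the active sub-segment has been reduced to $O(1)$ edges, is handled directly by the pointwise decay $\jap{x-y}^{-d}$ of the matrix elements of $\mathbf{K}$, together with the trivial telescoping of such factors along the remaining path.
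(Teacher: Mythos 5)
Your proposal is genuinely different from the paper's proof, and it contains a gap precisely where the $C^{n\log n}$ constant is supposed to arise. You attribute that factor to an $O(\log n)$-level recursion on spatial sub-paths, paying $C^n$ at each level. But this cannot be the mechanism: only two extractions of an $R^{-d}$-type gain are needed (and can be used) to go from $-d+\epsilon$ to $-3d+\epsilon$, so the remaining levels of your recursion would have no effect on the decay rate, and you do not say what they accomplish or why each one should cost a factor of $C^n$. Moreover, as you yourself flag, a sub-path of an irreducible path need not be irreducible, so a recursion on ``the dominating sub-path'' does not carry the structure it would need to iterate the crossing extraction; the paper deliberately avoids any such recursion.

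What the paper actually does is prove Proposition \ref{prop:decomposition}: a single combinatorial partition $U = \bigsqcup_{\alpha\in\mathcal{A}} U_\alpha'$ into $O(2^n)$ sets, where each $\alpha$ records the first time the path hits $x_n$, the last time it hits $x_0$, and a greedily chosen nested chain of crossings $(i_1,j_1),\dots,(i_m,j_m)$ extracted by the ``Procedure''. There is no dyadic decomposition in scale at all. Each $T^n$ on the corresponding \emph{unprimed} set is bounded deterministically by factorizing through the crossing points and applying Corollary \ref{cor:bourgain} together with a triangle-inequality/Cauchy--Schwarz argument (Lemmas \ref{lem:vij} and \ref{lem:longbound}), yielding $\epsilon^{3+2m}(C/\epsilon)^n\jap{x_0-x_n}^{-3d+\epsilon}$. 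The passage to the primed (disjointified) sets is then done by Lemma \ref{lm:bourgain2}, and the $C^{n\log n}$ arises entirely here: a naive application costs $2^{\sum_l(|E_l|+|F_l|)}$ with $\sum_l(|E_l|+|F_l|)=O(n^2)$, and it is Lemma \ref{lem:effi} --- a divide-and-conquer rewriting of the disjointness constraint sets that recurses on the \emph{time-index interval}, not on spatial sub-paths --- that brings this down to $O(n\log n)$. This efficiency lemma is the key idea your proposal does not anticipate; without something like it, the natural implementation of your outline (or of the paper's) yields only $C^{n^2}$.
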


One may compare Theorem \ref{thm:nlogn} with the trivial estimate
\[ |T^n_U(x_0,x_n)| \leq C^{n\log n}\epsilon^{1-n} \br{x_0 - x_n}^{-d+\epsilon}\]
which holds for any $U\subset (\Z^d)^{n-1}$, see \eqref{eq:log}.

It would be very interesting to know whether it is possible to improve the constant $C^{n \log n}$ to $C^n$ in Theorem \ref{thm:nlogn}, which would then imply Theorem \ref{thm:main} arguing as in Section \ref{sec:key}. In fact, we show that we may write $U= \bigcup_{\alpha \in \mathcal{A}} U_\alpha$ for some index set $\mathcal{A}$ with $\# \mathcal{A} \leq 2^n$ such that 
\[ |T^n_{U_\alpha}(x_0,x_n)| \leq \epsilon^3 (C/\epsilon)^n \br{x_0-x_n}^{-3d+\epsilon}.\]
Since the sets $U_\alpha$ are not disjoint, this does not immediately yield a bound on $T^n_U(x_0,x_n)$. Nonetheless, we can perform an appropriate ``disjointification" to write $U= \bigsqcup_{\alpha \in \mathcal{A}} U_\alpha'$. (Here $\sqcup$ denotes disjoint union.) Unfortunately, the most efficient way to implement this disjointification that we have found still produces the $C^{n\log n}$ bound in Theorem \ref{thm:nlogn}.

\paragraph{Acknowledgments}
The authors would like to thank Wilhelm Schlag and Tom Spencer for helpful discussions.
This material is based upon work supported by the National Science Foundation under Grant No. DMS - 1638352.

\section{Preliminaries}\label{sec:pre}
\subsection{Invertibility of $L$ on some function spaces} \label{sec:spaces}
In this subsection, we discuss the invertibility of the operators $L_\om$ and $L$ on appropriate domains of definition; see Proposition \ref{prop:inverse} below. To this end, we introduce function spaces which play the role of Sobolev spaces in the discrete setting. This section furnishes the formal operator-theoretic foundation for the study of various objects in this paper and can be skipped upon a first reading.

We start the discussion with the identity $\widehat{-\Delta f} (\theta) = \sum_{j=1}^d 2(1-\cos \theta_j)  \hat{f}(\theta)$ for $\theta \in \T^d = [ -\pi, \pi]^d$, where the symbol $\sum_{j=1}^d 2(1-\cos \theta_j)$ of $-\Delta$ is comparable to $|\theta|^2$. For $f\in \ell^2(\Z^d)$ and $s>-d/2$, define the Riesz potential $\Lambda^{s} = (-\Delta)^{s/2}$ by
\[ \widehat{\Lambda^{s} f}(\theta) = \bigg(\sum_{j=1}^d 2(1-\cos \theta_j )\bigg)^{s/2} \hat{f}(\theta). \] 
We shall work with $\Lambda^1$ and $\Lambda^{-1}$ for $d\geq 3$.

We note that $\Lambda^{-1}$ is a bounded map from $\ell^2(\Z^d)$ to $\ell^{q_d}(\Z^d)$ for $q_d^{-1} := 2^{-1}-d^{-1}$. This is a consequence of a discrete version of the Hardy-Littlewood-Sobolev inequality:
\begin{equation}\label{eqn:HLS}
 \norm{\Lambda^{-1} f}_{\ell^q(\Z^d)} \leq C_{p,d} \norm{f}_{\ell^p(\Z^d)} 
\end{equation}
for $1<p\leq 2$ and $q^{-1}= p^{-1}-d^{-1}$. The estimate \eqref{eqn:HLS} follows from an estimate for a discrete analogue of fractional integrals on $\Z^d$ (see, e.g., \cite[Proposition (a)]{SW}) and the fact that $\Lambda^{-1}f = K*f$ for a convolution kernel $K\in \ell^2(\Z^d)$ satisfying the bound $K(x)=O((1+|x|)^{-d+1})$. See the proof of Corollary \ref{cor:GF} in Appendix for a related computation. 

We specify the domain of the map $L_\om$. Let $H^1(\Z^d)$ be the image of $\ell^2(\Z^d)$ by the injection $\Lambda^{-1}:\ell^2(\Z^d) \to \ell^{q_d}(\Z^d)$. Namely, 
 \[ H^1(\Z^d):=\Lambda^{-1}[\ell^2(\Z^d)]=\{ \Lambda^{-1} f : f\in \ell^2(\Z^d)\}.\] 
We equip $H^1(\Z^d)$ with the norm $\norm{\Lambda^{-1}f }_{H^1(\Z^d)} := \norm{f}_{\ell^2(\Z^d)}$. In fact, $H^1(\Z^d)$ is a Hilbert space equipped with the inner product $\inn{\Lambda^{-1}f_1, \Lambda^{-1}f_2 }_{H^1(\Z^d)} :=  \inn{f_1,f_2 }_{\ell^2(\Z^d)}$. We \emph{define} $\Lambda: H^1(\Z^d) \to \ell^2(\Z^d)$ to be the inverse of the map $\Lambda^{-1}$. By definition, the maps $\Lambda^{-1} : \ell^2(\Z^d) \to H^1(\Z^d)$ and $\Lambda: H^1(\Z^d) \to \ell^2(\Z^d)$ are isometries. 

The range of $L_\om$ can be identified with $H^{-1}(\Z^d)$ defined by
\[ H^{-1}(\Z^d) := \{ f\in \ell^2(\Z^d): \Lambda^{-1} f \in \ell^2(\Z^d) \}. \]
Note that $H^{-1}(\Z^d)$ is a Hilbert space equipped with the inner product $\inn{ f,g}_{H^{-1}(\Z^d)} := \inn{\Lambda^{-1} f,\Lambda^{-1} g}_{\ell^2(\Z^d)}$. The map $\Lambda^{-1}:H^{-1}(\Z^d) \to \ell^2(\Z^d)$ and its inverse $\Lambda^{1}:\ell^2(\Z^d) \to H^{-1}(\Z^d)$ are isometries. By \eqref{eqn:HLS}, we have $\ell^{p_d}(\Z^d) \subset H^{-1}(\Z^d)$ for $p_d^{-1}:=2^{-1}+d^{-1}$.

\begin{prop} \label{prop:inverse} Let $0<\delta<1$. The operator $L_\om: H^{1}(\Z^d) \to H^{-1}(\Z^d)$ is a bounded operator with the bounded inverse $L_\om^{-1}$. The operator norms for $L_\om$ and $L_\om^{-1}$ are bounded uniformly in $\omega$.
\end{prop}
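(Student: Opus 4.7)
The plan is to factor $L_\omega = (-\Delta)(I + \delta T_\omega)$ with $T_\omega := \Lambda^{-2}\nabla^*\sigma\nabla$, and invert the second factor via a Neumann series on the Hilbert space $H^1(\Z^d)$.

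Directly from the definitions, $-\Delta = \Lambda^2$ is an isometric isomorphism from $H^1(\Z^d)$ onto $H^{-1}(\Z^d)$, with inverse $\Lambda^{-2}$. Indeed, for $u = \Lambda^{-1}g \in H^1$ with $g \in \ell^2$, one has $(-\Delta)u = \Lambda g$ and $\|(-\Delta)u\|_{H^{-1}} = \|\Lambda^{-1}\Lambda g\|_{\ell^2} = \|g\|_{\ell^2} = \|u\|_{H^1}$. It therefore suffices to bound $T_\omega$ on $H^1$ by $1$ uniformly in $\omega$, for then $\|\delta T_\omega\|_{H^1\to H^1}\leq \delta<1$ and the Neumann series produces $(I+\delta T_\omega)^{-1}$ on $H^1$ with norm at most $(1-\delta)^{-1}$.

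The bound on $T_\omega$ reduces, via the isometry $\Lambda: H^1 \to \ell^2$, to the Fourier-side inequality $\|\Lambda^{-1}\nabla^* g\|_{\ell^2(\Z^d)} \leq \|g\|_{\ell^2(\Z^d)^d}$ for arbitrary $g \in \ell^2(\Z^d)^d$, applied with $g = \sigma\nabla u$. The latter is a one-line Plancherel computation: with $m_j(\theta) := e^{i\theta_j}-1$ denoting the Fourier symbol of $\nabla_j$, we have $|m_j(\theta)|^2 = 2(1-\cos\theta_j)$, so the Fourier multiplier of $\Lambda^{-1}\nabla^*$ is the vector $(\overline{m_j(\theta)}/|m(\theta)|)_{j=1}^d$ of unit Euclidean norm in $j$, where $|m(\theta)|^2 = \sum_k 2(1-\cos\theta_k)$. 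Cauchy--Schwarz in the index $j$ therefore gives the pointwise bound $|\widehat{\Lambda^{-1}\nabla^* g}(\theta)|^2 \leq \sum_j |\widehat{g_j}(\theta)|^2$, and Plancherel concludes. Applying this with $g = \sigma\nabla u$, and using $\|\sigma\|_\infty \leq 1$ together with $\|\nabla u\|_{\ell^2}^2 = \langle u, -\Delta u\rangle = \|\Lambda u\|_{\ell^2}^2 = \|u\|_{H^1}^2$, yields $\|T_\omega u\|_{H^1} = \|\Lambda^{-1}\nabla^*\sigma\nabla u\|_{\ell^2} \leq \|u\|_{H^1}$.

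Assembling the pieces, $L_\omega: H^1 \to H^{-1}$ is bounded with $\|L_\omega\|_{H^1 \to H^{-1}} \leq 1+\delta$, and the inverse $L_\omega^{-1} = (I+\delta T_\omega)^{-1}(-\Delta)^{-1}: H^{-1} \to H^1$ exists and satisfies $\|L_\omega^{-1}\|_{H^{-1}\to H^1} \leq (1-\delta)^{-1}$. All constants depend only on $\delta$ and $d$, so the bounds are uniform in $\omega \in \Omega$. There is no real obstacle in this argument; the only point requiring any care is the Plancherel bound on the multiplier $\Lambda^{-1}\nabla^*$, and that is an immediate consequence of the algebraic identity $\sum_j |m_j(\theta)|^2 = |m(\theta)|^2$ built into the very definition of $\Lambda$. (Alternatively, one could obtain the same result from the Lax--Milgram theorem applied to the bilinear form $a_\omega(u,v) = \langle \mathbf{A}\nabla u,\nabla v\rangle_{\ell^2}$ on $H^1\times H^1$, whose coercivity and boundedness constants $1-\delta$ and $1+\delta$ come from $(1-\delta)\mathbf{I}_d \leq \mathbf{A}(x,\omega)\leq (1+\delta)\mathbf{I}_d$, but the Neumann route seems shorter.)
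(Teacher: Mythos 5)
Your proof is correct and follows essentially the same route as the paper's: factor $L_\omega$ through the isometry $-\Delta=\Lambda^2$ and invert the remaining factor $I+\delta\,(\text{norm}\le 1)$ by a Neumann series, with the norm bound coming from Plancherel and $\|\sigma\|_\infty\le 1$. The only cosmetic difference is that the paper writes the Neumann series for $M_\omega=\Lambda^{-1}\nabla^*\sigma\nabla\Lambda^{-1}$ on $\ell^2$, whereas you write it for $T_\omega=\Lambda^{-2}\nabla^*\sigma\nabla$ on $H^1$; these are conjugate under the isometry $\Lambda:H^1\to\ell^2$, so the two arguments are identical in substance.
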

\begin{proof} The proof is standard. First, observe that $\nabla_j: H^1(\Z^d) \to \ell^2(\Z^d)$ and $\nabla_j^* :\ell^2(\Z^d)\to H^{-1}(\Z^d)$ are bounded maps by Plancherel's theorem. Thus, $L_\om$ is a bounded map from $H^1(\Z^d)$ to $H^{-1}(\Z^d)$ uniformly in $\om\in \Omega$. 

Next, we observe that $-\Delta = \Lambda^1 \Lambda$ on $H^1(\Z^d)$. This is because \[ -\Delta (\Lambda^{-1} f) = \Lambda^1 f = \Lambda^1 \Lambda (\Lambda^{-1} f)\] 
for any $f\in \ell^2(\Z^d)$. This allows us to factorize $L_\om$ as
\[ L_\om = \Lambda^1 ( I + \delta M_\om ) \Lambda, \]
where $M_\om:= \Lambda^{-1} \nabla^*\sigma(\cdot, \om) \mathbf{I}_d \nabla \Lambda^{-1}$. The operator $M_\om$ is bounded on $\ell^2(\Z^d)$ with the operator norm bounded by $1$. Therefore, when $0< \delta<1$, the inverse of $I+\delta M_\om$ exists and is bounded on $\ell^2(\Z^d)$. This implies that $L_\om$ is invertible and $L_\om^{-1}$ is bounded (uniformly in $\omega$).
\end{proof}

This prototypical result also applies in a slightly different context which we will occasionally consider and which is therefore made precise next.

We may view $L = \nabla^* \mathbf{A} \nabla$ as a map acting on functions on the product space $\Z^d \times \Omega$ via $Lu(x,\om) = L_\om u_\om (x)$. One can show a completely analogous proposition, where the relevant function spaces are replaced by the following ones. We first let $L^2(\Z^d\times \Omega)$ be the Hilbert space equipped with the inner product induced from $\ell^2(\Z^d)$ and $L^2(\Omega)$. By letting $\Lambda^{-1}$ act on the lattice variable, we may regard it as a bounded injection from $L^2(\Z^d\times \Omega)$ to the mixed norm space $L^2(\Omega, \ell^{q_d}(\Z^d))$. Define the Hilbert spaces $H^1(\Z^d \times \Omega)= \Lambda^{-1}[ L^2(\Z^d\times \Omega) ] $ and 
\[ H^{-1}(\Z^d\times \Omega) = \{ f\in L^2(\Z^d\times \Omega): \Lambda^{-1} f \in L^2(\Z^d \times \Omega) \}. \]
Arguing as in the proof of Proposition \ref{prop:inverse}, one verifies that 
$$
L:H^{1}(\Z^d\times \Omega) \to H^{-1}(\Z^d\times \Omega)
$$
 is a bounded operator with the bounded inverse $L^{-1}$.



\subsection{Derivation of the perturbation series via Feshbach-Schur}\label{sec:derive}
Let $\hi_1 = H^1(\Z^d \times \Omega)$ and $\hi_2 = H^{-1}(\Z^d \times \Omega)$. In Section \ref{sec:spaces}, we have seen that $L: \hi_1 \to \hi_2$ and its inverse $L^{-1}:\hi_2 \to \hi_1$ are bounded operators. Let $P=\mathbb E[\cdot]$ be the projection operator acting on $L^2(\Z^d\times \Omega)$ and $P^\perp := I-P$. We may identify the operator $\curly {L}=(\mathbb{E} [L_\om^{-1}])^{-1}$ with the inverse of $PL^{-1}P : P\hi_2 \to P\hi_1$. To compute the inverse of $PL^{-1}P$, following \cite{Sigal}, one decomposes the operator $L$ into blocks 
\[
L = \ttmatrix{PLP}{P LP^\perp}{P^\perp LP}{P^\perp LP^\perp}.
\]
 
When $P^\perp L P^\perp : P^\perp \hi_1 \to P^\perp \hi_2$ is invertible, the inverse of $PL^{-1}P$ exists and is given by the Feshbach-Schur map (also called Schur complement formula)
\begin{equation}\label{eqn:Fesh}
\curly{L} = (PL^{-1}P)^{-1} = PLP - PLP^\perp (P^\perp L P^\perp)^{-1} P^\perp L P.
\end{equation}
We now check the invertibility of $P^\perp L P^\perp$. We write
\[ P^\perp L P^{\perp}  = \Lambda^{1}(I+\delta M^\perp) \Lambda P^\perp, \]
where $M^\perp = \Lambda^{-1} \nabla^* P^\perp \sigma \mathbf{I}_d \nabla \Lambda^{-1}$. The $L^2(\Z^d\times \Omega)$ operator norm of $M^\perp$ is bounded by $1$. Therefore $(I+\delta M^\perp)$ is invertible when $0<\delta<1$ and the inverse can be written as a Neumann series. One verifies that the inverse of $P^\perp L P^\perp$ is given by 
\[ (P^\perp L P^{\perp})^{-1}  = \Lambda^{-1}(I+\delta M^\perp)^{-1} \Lambda^{-1} P^\perp. \]
The upshot of these considerations, which we do not repeat here, is the expression \eqref{eq:series} for the operator $\mathbf{K}^\de$. See \cite{B} for details.

Finally, we also have that $\curly{L}:H^{1}(\Z^d) \to H^{-1}(\Z^d)$ is a bounded operator with bounded inverse, whenever $0<\de<1$. This follows from the expression \eqref{eqn:Fesh} and the boundedness of the operators $L$ and $L^{-1}$.


\subsection{Bourgain's lemmas} \label{sec:bourgain}

In this subsection, we state abstract versions of two main tools introduced in \cite{B}: a deterministic bound on composition of singular integral operators and Bourgain's disjointification trick.

Before we proceed, we briefly recall some well-known properties of singular integral operators to be used later.

By a \emph{singular integral operator} (of convolution type) acting on $\Z^d$, we mean, in this paper, a Fourier multiplier transformation $K$ of the form $\widehat{Ku} (\theta) = m(\theta) \hat{u}(\theta)$ associated with a multiplier $m$ on the $d$-torus $\T^d$ satisfying the bounds
\[ |\partial^\alpha m (\theta)| \leq C_\alpha |\theta|^{-|\alpha|} \]
for all multi-indices $|\alpha|\geq 0$. Here $\hat{u}(\theta)=\sum_{x\in \Z^d} u(x) e^{-i\theta\cdot x}$ denotes the Fourier transform of a function $u$ on $\Z^d$. The convolution kernel $K(x)$ of such $K$ satisfies the decay estimate $|K(x)| \leq C(1+|x|)^{-d}$ and the ``gradient" estimate $|\nabla K(x)| \leq C  (1+|x|)^{-(d+1)}$. The $\ell^2(\Z^d)$ boundedness of the convolution operator $K$ follows from the boundedness of $m$. It is also well-known, by the Calder\'{o}n-Zygmund theory, that $K$ is of weak-type $(1,1)$, hence bounded on $\ell^p(\Z^d)$ for all $1<p<\infty$ by interpolation and duality with the operator norm $O((p-1)^{-1})$ as $p\to 1$. See \cite{Stein} for a treatment of singular integrals in the continuous setting.

\subsubsection{Deterministic bounds}
We recall that we identify an operator $K$ with its kernel. We write $K^*$ for the adjoint of $K$ and so $K^*(x,y) = \overline{ K(y,x) }$. For an interval $I\subset [0,\infty)$, we set 
\beq\label{eq:KIdefn}
 K_I(x,y) := K(x,y) \chi_{I}(|x-y|).
 \eeq

\begin{lm} \label{lm:bourgain} Let $A>0$ and let $\eps>0$ be sufficiently small. There exists a constant $C=C_{d,A}$ such that the following holds. Let $\mathbf{I}_m = [0,2^m)$ and let $\{K^j\}_{1\leq j\leq n}$ be a collection of operators acting on functions on $\Z^d$ satisfying the assumptions
\begin{enumerate}
  \item[(i)] $ |K^j(x,y)|\leq A\br{x-y}^{-d}$. 
  \item[(ii)] $\sup_{m\geq 0}\l( \norm{K^j_{\mathbf{I}_m}}_{\ell^p \to \ell^p} + \norm{(K^j_{\mathbf{I}_m})^*}_{\ell^p \to \ell^p} \r)\leq A/(p-1)$ for all sufficiently small $p>1$.
 \end{enumerate}
Then the operator $
 T^n:=K^1 K^2 \ldots K^n
$
satisfies the pointwise bound 
\[
 |T^n(x_0,x_n)| \leq  \epsilon \l(\frac{C}{\eps}\r)^{n}\br{x_0-x_n}^{-d+\epsilon} 
\]
for all sufficiently small $\eps>0$.
\end{lm}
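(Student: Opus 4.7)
The plan is to combine the pointwise decay from (i) with the sharp $\ell^p$-bounds from (ii) for $p > 1$ close to $1$. By (ii) and duality (applied to the bound on $(K^j_{\mathbf{I}_m})^*$), each $K^j$ is bounded on $\ell^p$ and on $\ell^{p'}$ with norm at most $A/(p-1)$, so $T^n$ is bounded on $\ell^p$ with norm at most $(A/(p-1))^n$. Choosing $p - 1$ proportional to $\epsilon$ yields an operator norm of order $(A/\epsilon)^n$; the task is then to convert this $\ell^p$-boundedness into a pointwise bound with the correct $\br{x_0-x_n}^{-d+\epsilon}$ decay.

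To extract the decay I would peel off the last operator: writing $T^n(x_0,x_n) = (T^{n-1}\phi)(x_0)$ with $\phi(\cdot) := K^n(\cdot,x_n)$ and $R := \br{x_0-x_n}$, split $\phi = \phi_{\text{near}} + \phi_{\text{far}}$ at the scale $|z-x_n| = R/2$. For the far part, H\"older's inequality with conjugate exponents $p, p'$ gives
\[
|T^{n-1}\phi_{\text{far}}(x_0)| \leq \lVert (T^{n-1})^*\delta_{x_0}\rVert_{\ell^p} \cdot \lVert\phi_{\text{far}}\rVert_{\ell^{p'}} \lesssim \left(\frac{A}{p-1}\right)^{n-1} A\, R^{-d/p},
\]
where the first factor uses (ii) and duality while the second follows from direct summation of $|K^n(z,x_n)|^{p'}$ over $|z - x_n| > R/2$ using (i). Choosing $p - 1 \sim \epsilon/d$ converts $-d/p = -d + d(p-1)/p$ into $-d + \epsilon$, and the constant $(A/(p-1))^{n-1}\cdot A$ is of the form $\epsilon(C/\epsilon)^n$ with $C \sim Ad$.

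For the near part $\phi_{\text{near}}$, supported in $\{|z-x_n| \leq R/2\}$, the support constraint forces $|x_0 - z| \geq R/2$ in the integrand. Applying the inductive hypothesis $|T^{n-1}(x_0, z)| \lesssim \epsilon(C/\epsilon)^{n-1}\br{x_0-z}^{-d+\epsilon}$ together with the pointwise decay of $\phi_{\text{near}}$ from (i) produces a convolution of the form $\br{\cdot}^{-d+\epsilon} \ast \br{\cdot}^{-d}$, whose value at scale $R$ is $\lesssim R^{-d+\epsilon}\log R$. To prevent the $\log R$ from accumulating across the $n$ peelings, the induction should be set up with an auxiliary parameter $\epsilon_0 := \epsilon/n$, with $\log R$ absorbed as $R^{\epsilon_0}/\epsilon_0$; the cumulative loss over $n$ levels is then at most $R^{n\epsilon_0} = R^\epsilon$, as required.

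The main obstacle is balancing the three small parameters $\epsilon$, $p-1$, and $\epsilon_0$ so that the final bound holds with $C = C_{d,A}$ independent of $n$. A naive iteration using only (i) produces $(\log R)^{n-1}$ at the exponent and thus $R^{(n-1)\epsilon_0}$, which blows up for $n \geq 3$; the role of (ii) is precisely to supply the Calder\'on--Zygmund cancellation that replaces each logarithmic loss with an algebraic one of size $1/(p-1)$, allowing all logarithmic factors to be traded for a single $R^\epsilon$ at the cost of the $(A/\epsilon)^n$ operator-theoretic blow-up.
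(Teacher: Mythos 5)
Your proposal takes a genuinely different route from the paper, but as written it has a gap that is not resolved.

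The paper does not induct at all. It decomposes the full sum over paths $(x_1,\ldots,x_{n-1})$ according to the dyadic scale $2^m$ and the position $j_0$ of the \emph{longest} segment $|x_{j_0}-x_{j_0+1}|$. On each such piece, every link $K^j$ is replaced by its truncation $K^j_{\mathbf{I}_m}$ or $K^j_{\mathbf{I}_{m+1}}$, the single longest link is pulled out pointwise via assumption~(i) with $|K^{j_0+1}_{\mathbf{I}_{m+1}\setminus\mathbf{I}_m}|\le A2^{-md}$, and the two remaining truncated compositions $T^{j_0}_{\mathbf{I}_m}$, $\widetilde T^{j_0}_{\mathbf{I}_{m+1}}$ are handled by \emph{one} application of H\"older over the ball of radius $O(n2^m)$ together with assumption~(ii), giving $\eps(C/\eps)^n 2^{-m(d-\eps)}$. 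Summing geometrically over $m$ (restricted to $2^m\gtrsim R/n$) and over $j_0$ produces the claimed bound. No iteration, no logarithm.

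Your far-part computation is fine and even resembles one step of the paper's H\"older argument. The problem is the near part. The inductive bound $|T^{n-1}(x_0,z)|\lesssim \eps(C/\eps)^{n-1}\langle x_0-z\rangle^{-d+\eps}$ convolved with the ball truncation of $\langle\cdot\rangle^{-d}$ gives $\lesssim R^{-d+\eps}\log R$, i.e.\ a $\log R$ loss at \emph{each} peel, and the constant that reproduces itself in the induction must absorb $A\log R$, which is impossible uniformly in $R$. Your proposed remedy, $\eps_0=\eps/n$, converts each $\log R$ into $\eps_0^{-1}R^{\eps_0}$; the $R^{\eps_0}$ factors compound harmlessly, but the $\eps_0^{-1}=n/\eps$ factors compound to $(n/\eps)^{n}$, which is exactly the prefactor in the trivial bound \eqref{eq:log} and defeats the purpose of the lemma. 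Your closing sentence appeals to assumption~(ii) and ``Calder\'on--Zygmund cancellation'' to replace each log by $1/(p-1)$, but in your scheme (ii) is only used in the far part; the near part uses only (i) and the induction, so this claim is not substantiated. To make (ii) do the advertised work one has to apply it to \emph{truncated} compositions at a fixed dyadic scale, which is precisely what the paper's decomposition by longest segment is designed to arrange, and which your peel-off by the \emph{last} link does not.
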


Lemma \ref{lm:bourgain} is an abstract version of Lemma 1 in \cite{B}, where the operators $K^j$ are of the form $K^j(x,y) = K(x-y) b(y)$ for a singular integral operator $K$ and a bounded function $b$, see Example \ref{ex:cz}. The main motivation to state Lemma \ref{lm:bourgain} in this generality is that it makes it easier to apply the result for variants of $K^j$; see Corollary \ref{cor:bourgain}. While the bound in Lemma 1 of \cite{B} features $(C/\epsilon)^{n}$, we note here that its proof in fact yields an additional factor of $\epsilon$. This gain allows for the improvements described in Remark \ref{rmk:main} (i). We present the proof of Lemma \ref{lm:bourgain} in the appendix for completeness.

We give the main example for operators $\{ K^j \}_{1\leq j\leq n}$ for Lemma \ref{lm:bourgain}.
\begin{exam}\label{ex:cz}
Consider the example from Lemma 1 in \cite{B}:
\[ K^j(x,y) = K(x-y) b(y)\] for a singular integral operator $K$ of convolution type acting on $\Z^d$ (e.g.,  $K=\nabla_j (-\Delta)^{-1} \nabla_k^*$) and a function $b \in L^\infty(\Z^d)$. To check Assumption (i) and (ii), it is enough to assume that $K^j(x,y)=K(x-y)$ since $b\in L^\infty(\Z^d)$. Assumption (i) is immediate from our assumption on $K$. To verify Assumption (ii), we compare $K_{\mathbf{I}_m}$ with its variant $\tilde{K}_{\mathbf{I}_m}$, where the sharp cutoff $\chi_{I_m}(|x-y|)$ is replaced by a smooth cutoff $\psi(2^{-m}|x-y|)$. Here, $\psi$ is a smooth even function supported on $[-2,2]$ and equal to $1$ on $[-1,1]$. Note that $\norm{K_{\mathbf{I}_m} - \tilde{K}_{\mathbf{I}_m}}_{\ell^1(\Z^d)} \leq C$ and thus the operator $K_{\mathbf{I}_m} - \tilde{K}_{\mathbf{I}_m}$ is uniformly bounded on $\ell^p(\Z^d)$ for $1\leq p\leq \infty$. Therefore, it suffices to check Assumption (ii) for $\tilde{K}_{\mathbf{I}_m}$, which is well-known by the Calder\'{o}n-Zygmund theory.
\end{exam}

In fact, Lemma \ref{lm:bourgain} has a slightly wider scope than its statement suggests. We state a specific version in the following corollary and use it later with $K^j$ as in Example \ref{ex:cz}.
\begin{cor} \label{cor:bourgain}
Let $A>0$ and let $\eps>0$ be sufficiently small. There exists a constant $C=C_{d,A}$ such that the following holds. Let $\{K^j\}_{1\leq j\leq n}$ be a collection of operators as in Lemma \ref{lm:bourgain} and let $\curly{I}=\{I_j \}_{1\leq j\leq n}$ be a collection of intervals $I_j \subset [0,\infty)$. Then 
\begin{equation}\label{eq:bgt}
 |K^1_{I_1} K^2_{I_2} \ldots K^n_{I_n}(x_0,x_n)| \leq  \epsilon \l(\frac{C}{\eps}\r)^{n}\br{x_0-x_n}^{-d+\epsilon} 
\end{equation}
for all sufficiently small $\epsilon>0$. Moreover, the bound \eqref{eq:bgt} is invariant under the change 
\begin{equation}\label{eq:bgt2}
K^j_{I_j}(x,y) \to e^j_{1}(x)K^j_{I_j}(x,y)e^j_{2}(y)
\end{equation}
for any $e^j_1,e^j_2 \in \ell^\infty(\Z^d)$ such that $\norm{e^j_{1}}_{\ell^\infty} \leq 1$ and $\norm{e^j_{2}}_{\ell^\infty} \leq 1$.
\end{cor}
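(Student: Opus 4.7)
The plan is to verify that each operator $\tilde K^j$ with kernel
\[
\tilde K^j(x,y) = e^j_1(x)\, K^j(x,y)\, \chi_{I_j}(|x-y|)\, e^j_2(y)
\]
still satisfies hypotheses (i) and (ii) of Lemma \ref{lm:bourgain} with a constant $A'$ depending only on $A$ and $d$, and then to invoke that lemma directly on the composition $\tilde K^1 \tilde K^2 \cdots \tilde K^n$. Hypothesis (i) is immediate since $\|e^j_1\|_{\ell^\infty}, \|e^j_2\|_{\ell^\infty} \leq 1$ and $\chi_{I_j} \leq 1$ give $|\tilde K^j(x,y)| \leq A\langle x-y\rangle^{-d}$. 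Multiplication by functions in the unit ball of $\ell^\infty$ preserves every $\ell^p \to \ell^p$ operator norm, so verifying (ii) reduces to bounding $K^j_{I_j \cap \mathbf{I}_m}$ and its adjoint on $\ell^p$ uniformly in $m\geq 0$.

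The only subtlety is that $I_j \cap \mathbf{I}_m$ is an interval $[a,b)$ whose endpoints need not be dyadic. I decompose
\[
K^j_{[a,b)} = K^j_{[0,b)} - K^j_{[0,a)},
\]
and for each half-line cutoff $K^j_{[0,B)}$ with $B>0$ I choose $\ell$ with $2^{\ell-1} \leq B < 2^\ell$ and split $K^j_{[0,B)} = K^j_{\mathbf{I}_\ell} - K^j_{[B, 2^\ell)}$. The first summand has $\ell^p \to \ell^p$ norm at most $A/(p-1)$ by hypothesis (ii). The second is supported in the annulus $\{B \leq |x-y| < 2^\ell\}$ with pointwise bound $A\langle x-y\rangle^{-d}$, so its row and column sums are estimated by
\[
\sum_{B \leq |x-y| < 2^\ell} A\langle x-y\rangle^{-d} \leq C_d A \log(2^\ell / B) \leq C_d A
\]
using the dyadic choice of $\ell$; a Schur test then gives $\|K^j_{[B, 2^\ell)}\|_{\ell^p \to \ell^p} \leq C_d A$ uniformly in $1 \leq p \leq \infty$. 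Summing these contributions, and handling the adjoint analogously using the adjoint half of hypothesis (ii), produces
\[
\|K^j_{I_j \cap \mathbf{I}_m}\|_{\ell^p\to\ell^p} + \|(K^j_{I_j \cap \mathbf{I}_m})^*\|_{\ell^p\to\ell^p} \leq A'/(p-1)
\]
for all $p$ sufficiently close to $1$ and uniformly in $m$, where $A' = C_d A$.

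With hypotheses (i) and (ii) verified for $\{\tilde K^j\}_{1\leq j\leq n}$ with constant $A'$, Lemma \ref{lm:bourgain} applied to $\tilde K^1 \tilde K^2 \cdots \tilde K^n$ immediately delivers the pointwise bound $\epsilon(C/\epsilon)^n \langle x_0 - x_n\rangle^{-d+\epsilon}$ with $C = C_{d,A'} = C_{d,A}$, which is exactly \eqref{eq:bgt} with the $e^j_1,e^j_2$ weights absorbed. Since the reduction is essentially mechanical and the only genuine estimate is the elementary Schur test on the non-dyadic annular remainder $K^j_{[B, 2^\ell)}$, no real obstacle arises; the corollary is a repackaging of Lemma \ref{lm:bourgain} into a form directly applicable to kernels truncated to intervals and twisted by bounded weights.
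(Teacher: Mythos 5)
Your proof is correct and follows essentially the same strategy as the paper: verify that the twisted, truncated kernels still satisfy hypotheses (i) and (ii) of Lemma \ref{lm:bourgain} with a constant depending only on $A,d$, and then apply that lemma directly. The only cosmetic difference is in how the non-dyadic interval is handled: the paper approximates $I_j$ by a dyadic annulus and controls the discrepancy by referring back to Example \ref{ex:cz} (sharp vs.\ smooth cutoff, $\ell^1$ kernel estimate), whereas you compare two sharp cutoffs directly and control the annular remainder $K^j_{[B,2^\ell)}$ by a Schur test --- both arguments amount to the same $\ell^1$-kernel bound, so this is a matter of presentation, not substance.
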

\begin{proof}
We first note that $K^j_{I_j}$ satisfies Assumptions (i) and (ii) with another constant $A'>0$ \emph{independent} of $I_j$. This can be shown by choosing an interval of the forms $\mathbf{I}_m$ or $\mathbf{I}_{m} \setminus \mathbf{I}_{m_0}$ for certain $m\geq m_0\geq 1$ that best approximates $I_j$ and arguing as in Example \ref{ex:cz} using Assumption (ii) for $K^j_{\mathbf{I}_m}$. This verifies \eqref{eq:bgt} by Lemma \ref{lm:bourgain}. Moreover, this bound is invariant under \eqref{eq:bgt2} since the bounds in Assumption (i) and (ii) are preserved under the replacement \eqref{eq:bgt2}.
\end{proof}

\be{rmk}\label{rmk:fubini}
The constant $(C/\epsilon)^n$ obtained in \cite[Lemma 1]{B} (and also stated in Corollary \ref{cor:bourgain}) is rather non-trivial. To compare, we note that there is a much weaker bound that only requires (a weaker version of) the size assumption $|K^j(x,y)| \leq A \br{x-y}^{-d}$: $|T^n(x_0,x_n)|$ is bounded by
\begin{equation}\label{eq:log}
\begin{split}
 \sum_{\ul{x}\in (\Z^d)^{n-1}} |K^1(x_0,x_1)K^2(x_1,x_2)\ldots K^n(x_{n-1} ,x_n)| \leq  \epsilon\left(\frac{nC}{\epsilon}\right)^n \br{x_0-x_n}^{-d+\epsilon}
\end{split}
\end{equation}
for all $0<\epsilon<d/4$. Note that Lemma \ref{lm:bourgain} saves a factor of $n^n$ compared to \eqref{eq:log}. (We still use \eqref{eq:log} -- to justify the use of Fubini's theorem on various occasions throughout this note.)

To see that \eqref{eq:log} holds, one may use the bound $|K^j(x,y)| \leq A \br{x-y}^{-d+(\epsilon/n)}$ and then apply the following elementary inequality $(n-1)$-times. 
\e{rmk}
\begin{lm}\label{lm:sum}
Assume that $\alpha,\beta$ are positive numbers such that $3d/4 \leq \alpha,\beta \leq d-\epsilon$ for some $\epsilon>0$. Then there exists a constant $C$ such that for any $a,b\in \Z^d$, we have
\[ \sum_{x\in\Z^d} \br{a-x}^{-\alpha}\br{x-b}^{-\beta} \leq C \epsilon^{-1} \br{a-b}^{-(\alpha+\beta-d)}. \]
\end{lm}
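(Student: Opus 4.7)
The plan is to obtain the stated inequality by a standard dyadic decomposition of $\Z^d$ around the two centers $a$ and $b$. Write $R := \langle a - b\rangle$ and split $\Z^d$ into three geometric regions: $R_1 := \{x : |x-a| \leq R/2\}$, $R_2 := \{x : |x-b| \leq R/2\}$, and $R_3 := \Z^d \setminus (R_1 \cup R_2)$. On $R_1$ we have $|x-b| \asymp R$, on $R_2$ we have $|x-a| \asymp R$, and on $R_3$ we have $|x-a| \asymp |x-b| \gtrsim R$. The whole argument is just the observation that the $\epsilon^{-1}$ factor in the claim comes precisely from summing one factor over a ball of radius $R/2$ where the exponent $\alpha$ (or $\beta$) is close to $d$.

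More precisely, I would estimate $R_1$ as follows. Since $|x-b| \geq |a-b| - |a-x| \geq R/2$ (up to a harmless constant from the $\langle \cdot \rangle$ convention), I pull out $\langle x-b\rangle^{-\beta} \lesssim R^{-\beta}$ and reduce to bounding $\sum_{|x-a|\leq R/2}\langle x-a\rangle^{-\alpha}$. Decomposing this into dyadic annuli $A_k := \{2^k \leq |x-a| < 2^{k+1}\}$, $0 \leq k \lesssim \log_2 R$, each annulus contributes $\lesssim 2^{k(d-\alpha)}$, and since $d-\alpha \geq \epsilon$ the geometric series is controlled by
\[
\sum_{k=0}^{\lceil \log_2 R\rceil} 2^{k(d-\alpha)} \;\leq\; \frac{2^{(d-\alpha)(\log_2 R + 1)}}{2^{d-\alpha} - 1} \;\lesssim\; \epsilon^{-1} R^{d-\alpha},
\]
where I used $2^{d-\alpha} - 1 \geq c\epsilon$ for small $\epsilon$. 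Combining, the $R_1$ contribution is $\lesssim \epsilon^{-1} R^{d - \alpha - \beta}$, as desired. The $R_2$ estimate is identical by symmetry.

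For $R_3$, dyadic annuli $A_k$ around $a$ with $2^k \gtrsim R$ satisfy $\langle x-b\rangle \asymp 2^k$ as well, so each contributes $\lesssim 2^{k(d-\alpha-\beta)}$. Since $\alpha + \beta \geq 3d/2$, we have $\alpha + \beta - d \geq d/2$ and this geometric series converges with a constant depending only on $d$, giving $\sum_{2^k \gtrsim R} 2^{k(d-\alpha-\beta)} \lesssim R^{d-\alpha-\beta}$. No $\epsilon^{-1}$ factor is generated here because the relevant exponent $\alpha + \beta - d$ is bounded away from $0$ uniformly in $\epsilon$. The degenerate case $R \lesssim 1$ reduces to $\sum_x \langle x-a\rangle^{-(\alpha+\beta)}$, which is $O(1)$ by the same reasoning.

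There is no real obstacle; the only subtle point is keeping track of where the $\epsilon^{-1}$ has to appear. It comes \emph{only} from the two ``near'' regions, where the exponent in the surviving summation is $\alpha$ (or $\beta$), which can be arbitrarily close to $d$; the hypothesis $\alpha,\beta \geq 3d/4$ is used precisely to ensure that the far region gives a clean, $\epsilon$-independent bound via $\alpha + \beta - d \geq d/2$. Adding the three contributions yields the claimed estimate.
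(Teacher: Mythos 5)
Your proof is correct and follows essentially the same approach as the paper, which sketches the dyadic decomposition $\Z^d=\bigcup_{m\geq 0}\{x:\max(|a-x|,|x-b|)\sim 2^m\}$ and leaves the details to the reader; your near-$a$/near-$b$/far split with dyadic annuli inside each region is just a reorganization of the same decomposition. In particular, you correctly identify the two places the hypotheses are used: the $\epsilon^{-1}$ arises from the near-region geometric series with ratio $2^{d-\alpha}$ or $2^{d-\beta}$ close to $1$, and the lower bound $\alpha,\beta\geq 3d/4$ ensures $\alpha+\beta-d\geq d/2$ so the far region converges with an $\epsilon$-independent constant.
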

One way to verify Lemma \ref{lm:sum} is to make a dyadic decomposition $\Z^d = \bigcup_{m \geq 0} \{ x\in \Z^d: \max(|a-x|,|x-b|) \sim 2^m\} $ as in the proof of Lemma \ref{lm:bourgain} (see the appendix). We leave the detail to the interested reader.

\subsubsection{Bourgain's disjointification trick}
One of the main technical challenges that \cite{B} overcomes is bounding $T^n_{S}$ for rather small $S\subset (\Z^d)^{n-1}$. In the proof, after this is achieved for certain sets $S$, it remains to add appropriate disjointness conditions, resulting in even smaller sets $S'\subset S$. Bourgain's trick then gives a way to bound $|T^n_{S'}(x_0,x_n)|$ in terms of a bound for $|T^n_{S}(x_0,x_n)|$, up to a factor of $C^n$.

We slightly generalize Bourgain's trick in the following lemma. 

\begin{lm}\label{lm:bourgain2}
Let $\{K^j\}_{1\leq j\leq n}$ be a collection of operators as in Lemma \ref{lm:bourgain} and define $T^n_S$ as in \eqref{eq:tns} for $S \subset (\Z^d)^{n-1}$. For given subsets $\{E_l,F_l \}_{1\leq l\leq m}$ of $\{0,1,2,\ldots,n\}$, define 
\[ S' := S \cap \bigcap_{1\leq l \leq m} \{ \ux\in(\Z^d)^{n-1}: \{ x_u : u\in E_l \} \cap \{ x_v : v\in F_l \} = \emptyset \}. \]

Assume that $S$ is a finite set and we have
\[
|T^n_S(x_0,x_n)| \leq M(x_0,x_n)
\]
for some function $M$ and that the estimate remains invariant under the change 
\begin{equation}\label{eqn:change}
K^j(x,y) \to  e^j_{1}(x)K^j(x,y)e^j_{2}(y).
\end{equation}
Then we have
\begin{equation}\label{eqn:bg2}
 |T^n_{S'}(x_0,x_n)| \leq 2^{\sum_{1\leq l\leq m} |E_l|+|F_l|} M(x_0,x_n).
\end{equation}

When $S$ is not finite, consider the truncation $S\cap X_k$, where 
\[ X_k := \{ \ul{x}\in (\Z^d)^{n-1}: \max_{0\leq j<n }|x_j-x_{j+1}| < 2^k \}. \]  
Then \eqref{eqn:bg2} holds if
\[ |T^n_{S\cap X_k} (x_0,x_n)| \leq M(x_0,x_n)\]
holds for all large $k\geq 1$ and the estimate remains invariant under the change \eqref{eqn:change}.
\end{lm}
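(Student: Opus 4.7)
The plan is to reduce to a single disjointness condition via induction on $m$, where each condition $\{x_u\}_{E_l}\cap\{x_v\}_{F_l}=\emptyset$ contributes an independent multiplicative factor $2^{|E_l|+|F_l|}$. The base case $m=0$ is the hypothesis $|T^n_S|\leq M$, and the induction is consistent because the bound at each stage is a positive multiple of $M$ and is still invariant under the kernel modifications \eqref{eqn:change} on which the single-step argument relies. For the infinite-$S$ variant, I would first run the finite argument on each truncation $S\cap X_k$ and then pass to $k\to\infty$; the absolute summability afforded by \eqref{eq:log} (applied to $S\cap X_k$), together with the uniformity of the bound in $k$, makes this limit legitimate via dominated convergence.

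For the single-condition step, fix $E,F\subseteq\{0,1,\ldots,n\}$ and set $S^*:=S\cap\{\ux:\{x_u\}_E\cap\{x_v\}_F=\emptyset\}$. My first approach would be the Fourier identity
\[
\mathbbm{1}[x_u\neq x_v] \;=\; 1 - \int_{\T^d} e^{i\theta\cdot(x_u-x_v)}\,\frac{d\theta}{(2\pi)^d},
\]
expanding the product $\prod_{(u,v)\in E\times F}\mathbbm{1}[x_u\neq x_v]$ into $2^{|E|\cdot|F|}$ integrals and regrouping each integrand as a product of unit-modulus characters in the individual $x_w$'s. Each such character can be absorbed into an adjacent kernel $K^j$ via \eqref{eqn:change}, so the invariance hypothesis supplies a uniform bound $|T^n_S(\text{modified})|\leq M(x_0,x_n)$ for every parameter choice. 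Integrating and applying the triangle inequality bounds $|T^n_{S^*}|$ by $2^{|E|\cdot|F|}M$, which already coincides with the claimed $2^{|E|+|F|}M$ when $|E|=1$, $|F|=1$, or $|E|=|F|=2$.

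The main obstacle is sharpening the naive inclusion-exclusion constant $2^{|E|\cdot|F|}$ to the $2^{|E|+|F|}$ claimed in the lemma once both $|E|$ and $|F|$ exceed $2$. This is precisely the role of Bourgain's Steinhaus-system/Markov brothers' argument from \cite{B}: the restricted sum $T^n_{S^*}$ is reinterpreted as the action of a specific linear functional on a polynomial in Steinhaus variables indexed by $\Z^d$, and its norm is controlled by applying the Markov brothers' inequality one direction at a time across the elements of $E\cup F$ rather than once per pair in $E\times F$, producing only one factor of $2$ per element. This is the technical heart of the argument; the cruder pair-by-pair version above already suffices for all the applications of the lemma in the present paper, where $|E_l|$ and $|F_l|$ stay small.
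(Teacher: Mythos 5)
Your reduction to a single disjointness condition by induction on $m$, the observation that the resulting bound is still invariant under \eqref{eqn:change}, and the truncate-and-pass-to-the-limit step for infinite $S$ are all fine and match the paper. The Fourier identity $\mathbbm{1}[x_u\neq x_v]=1-\int_{\T^d}e^{i\theta\cdot(x_u-x_v)}\,d\theta/(2\pi)^d$, expanded over all $(u,v)\in E\times F$ with independent integration variables and the resulting characters absorbed into the kernels via \eqref{eqn:change}, is also sound and does give $|T^n_{S^*}|\leq 2^{|E|\cdot|F|}M$. The genuine gap is twofold. First, you never actually prove the stated exponent $2^{|E|+|F|}$; you defer it entirely to ``Bourgain's Steinhaus/Markov argument.'' Second, and more seriously, your fallback claim that $2^{|E_l|\cdot|F_l|}$ ``already suffices for all the applications of the lemma in the present paper, where $|E_l|$ and $|F_l|$ stay small'' is false. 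In the application to \eqref{eqn:red3} one takes $E_1=\{0,\ldots,j_1-1\}$ and $F_1=\{j_0+1,\ldots,n\}$, of sizes $j_1$ and $n-j_0$, both of which are typically of order $n$ (e.g.\ $j_1\approx j_0\approx n/2$), so $|E_1|\cdot|F_1|\asymp n^2$. Your bound then costs $2^{\Theta(n^2)}$ instead of the $C^n$ that the paper's bound $2^{\sum_l(|E_l|+|F_l|)}\leq 2^{4n}$ provides, and a $C^{n^2}$ factor makes the perturbation series \eqref{eq:series} divergent for every $\delta>0$. The same obstruction recurs in Section \ref{sec:main2} (the sets $E_l,F_l$ built into $A_S^2$ in the proof of \eqref{eqn:pass0}), where the entire point is to push $\sum_l(|E_l|+|F_l|)$ down to $O(n\log n)$; a per-pair loss would again blow up to $2^{\Omega(n^2)}$.

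For context on why the sharper exponent is not a cosmetic refinement of your approach: your scheme attaches a separate torus variable to each \emph{pair} $(u,v)\in E\times F$, which is precisely what forces the product $|E|\cdot|F|$ in the exponent. The paper's Steinhaus construction instead attaches one variable $\theta_x$ to each lattice \emph{point} $x$ visited, weighted by a Poisson kernel $P_t(\theta_x)$. Averaging the invariance-modified $\tilde T^n_S$ over these variables yields a polynomial in $t$ of degree $D=|E_1|+|F_1|$ whose top coefficient is exactly the restricted sum over $S\cap\{\{x_j\}_{E_1}\cap\{x_k\}_{F_1}=\emptyset\}$, and the Markov brothers' inequality extracts that coefficient at a cost of $2^{D-1}$, hence one factor of $2$ per \emph{element} rather than per \emph{pair}. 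This pass from ``variable per pair'' to ``variable per point, Poisson-weighted, read off as a polynomial coefficient'' is the content you would need to supply; it does not follow from your inclusion-exclusion set-up by regrouping.
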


The proof, which we relegate to the appendix, follows \cite{B} and uses Steinhaus systems and the Markov brothers' inequality for polynomials.

\section{Proof of the main result}\label{sec:mainresult}
\subsection{The key estimate}\label{sec:key}
We express the right-hand side of \eqref{eq:series} in terms of paths in $\Z^d$. We let $K_1,\ldots,K_n$ be singular integral operators of the form $\nabla_{i} (-\Delta)^{-1} \nabla_{i'}^*$ for some $1\leq i,i'\leq d$. This specific choice, however, is not important for the argument. For every $n\geq 1$ and every subset $X\subset (\Z^d)^{n-1}$, define a function $f_X:\Z^d\times\Z^d\to \R$ by
\beq\label{eq:fdefn}
\begin{aligned}
&f_X(x_0,x_n)\\
&:=P\sigma(x_0) \sum\limits_{\ul{x}\in X} K_1(x_0-x_1) P^\perp \sigma(x_1) \cdots K_n(x_{n-1}-x_n)P^\perp \sigma(x_n).
\end{aligned}
\eeq
Here and in the following, we denote $\ul{x}=(x_1,\ldots,x_{n-1})$ and we suppress the randomness from the notation. 

It should be noted that $T^n_X$, studied in Section \ref{sec:pre}, is a deterministic version of $f_X$. In the following, we indicate how to obtain a bound for $f_X(x_0,x_n)$ from a bound for $T^n_X(x_0,x_n)$.
\be{lm}\label{lm:transfer} Let $X\subset (\Z^d)^{n-1}$ and $b_1,\ldots,b_n:\Z^d\to\C$ be functions such that $\norm{b_j}_\infty \leq 1$. Define operators $K^j$ via their kernels $K^j(x,y):=K_j(x-y)b_j(y)$. Assume that 
\[ T^n_{X}(x_0,x_n) = \sum\limits_{\ul{x}\in X} K^1(x_0,x_1)K^2(x_1,x_2)\ldots K^n(x_{n-1},x_n)
\]
satisfies the bound
\[ |T^n_X(x_0,x_n)| \leq M(x_0,x_n) \]
for some function $M$ independent of the choice of $\{b_j\}_{1\leq j\leq n}$. Then
\[ |f_X(x_0,x_n)| \leq 2^{n} M(x_0,x_n). \]
\e{lm}
\be{proof}
We replace each $P^\perp$ with $I-P=I-\mathbb E$ in \eqref{eq:fdefn}. This allows us to write $f_X(x_0,x_n)$ as a sum of $2^n$ terms of the form \eqref{eq:fdefn}, where each $P^\perp$ is replaced by either $I$ or $-\mathbb E$. For each of these terms, we use Fubini's theorem to move all the integrations corresponding to $\mathbb E$ outside of the sum $\sum_{\ux\in X}$. The proof is completed by bounding the sum over $\ux\in X$ using the assumption on $T^n_X$ with $b_j(x)=\sigma(x,\om_j)$ for some $\om_j \in \Omega$. (The assumption $\norm{b_j}_\infty \leq 1$ is guaranteed because $|\sigma(x,\om)|\leq 1$ holds for almost every $\om$ and all the $\om_j$ appear under an integral.)
\e{proof}

Lemma \ref{lm:bourgain}, Example \ref{ex:cz}, and Lemma \ref{lm:transfer} imply that 
\begin{equation}\label{eq:basic}
|f_{(\Z^d)^{n-1}}(x_0,x_n)| \leq  \epsilon \l(\frac{C}{\eps}\r)^{n}\br{x_0-x_n}^{-d+\epsilon}.
\end{equation}
Our main result is a consequence of the following improved estimate.

\be{prop}[Key estimate]\label{prop:key}
There exists a constant $C>0$ such that the following holds. For every $1\leq j\leq n$, let $K_j=\nabla_{i_j} (-\Delta)^{-1} \nabla_{i_j'}^*$ for some $1\leq i_j,i_j'\leq d$. Then, for every $\eps>0$, $n\geq 3$ and distinct $x_0,x_n\in\Z^d$, we have
\beq\label{eq:propkey}
|f_{(\Z^d)^{n-1}}(x_0,x_n)|\leq \eps^3\l(\frac{C}{\eps}\r)^n \jap{x_0-x_n}^{-3d+\eps}.
\eeq
\e{prop}
\be{proof}[Proof of Theorem \ref{thm:main} assuming Proposition \ref{prop:key}]
We recall what \eqref{eq:series} says element-wise, i.e.,
$$
K^\de_{i,i'}=\delta \sum_{n=1}^\it (-\de)^n P \sigma (\mathbf{K} P^\perp \sigma)_{i,i'}^n
$$
for every $1\leq i,i'\leq d$. When we write out the matrix product $(\mathbf{K} P^\perp \sigma)_{i,i'}^n$, we obtain a sum of terms defined as in \eqref{eq:fdefn}, with $K_j=\nabla_{i_j} (-\Delta)^{-1} \nabla_{i_j'}^*$ for some $1\leq i_j,i_j'\leq d$. For each choice of the ``outside indices'' $i,i'$, there are $d^{n-1}$ choices of $K_j$ (for every $n$), and so
\beq\label{eq:series2}
\begin{aligned}
|K^\de_{i,i'}(x,y)|
=&\delta \l|\sum_{n=1}^\it (-\de)^n P \sigma (\mathbf{K} P^\perp \sigma)_{i,i'}^n(x,y)\r|\\
\leq& \delta \sum_{n=1}^\it  \de^n d^{n-1} \max_{K_1,\ldots,K_n}\l| f_{(\Z^d)^{n-1}}(x,y)\r|.
\end{aligned}
\eeq
The maximum is taken over operators $K_1,\ldots,K_n$ of the form $\nabla_{i_j} (-\Delta)^{-1} \nabla_{i_j'}^*$ for some $1\leq i_j,i_j'\leq d$. Using \eqref{eq:basic}, we see that 
$$\begin{aligned}
|K^\de_{i,i'}(x,y)|
\leq& \delta \eps \sum_{n=1}^\it  \l(\frac{C\de d}{\eps}\r)^n \jap{x-y}^{-d+\eps}
=C \delta^2 \jap{x-y}^{-d+\eps}
\end{aligned}
$$
whenever $0<\delta < c \epsilon$ with $c=(Cd)^{-1}$. This estimate, in particular, verifies the case $x=y$ in Theorem \ref{thm:main}.

When $x\neq y$, we use Proposition \ref{prop:key} (instead of \eqref{eq:basic} as above) together with the following observation: $f_{(\Z^d)^{n-1}}(x,y)=0$ for $n=1,2$ because 
$$
P\sig(x,\cdot)P^\perp \sig(y,\cdot)=0,\qquad P\sig(x,\cdot)P^\perp \sig(x_1,\cdot)P^\perp \sig(y,\cdot)=0,
$$
where the second equality holds for all $x_1\in\Z^d$. (Equivalently, when $n=2$, all paths connecting $x\neq y$ are reducible in the sense of Definition \ref{defn:reducible}.)
 
The fact that $\mathbf{K}^\de$ is a convolution operator, i.e., that $K^\de_{i,i'}(x_0,x_n)=K^\de_{i,i'}(x_0-x_n,0)$ follows from \eqref{eq:fdefn}: We change the summation variables $x_k\to x_k-x_n$ and recall that the random variables $\{\sigma(x,\om)\}_{x\in (\Z^d)^{n-1}}$ are identically distributed. 

Finally, one can derive the regularity properties of the Fourier transform $\ft{K^{\de}_{i,j}}$ from the decay estimate \eqref{eq:main} by standard arguments (mainly integration by parts). This finishes the proof of Theorem \ref{thm:main}.
\e{proof}

In the remainder of this section, we prove Proposition \ref{prop:key} by successively reducing it to simpler statements.

As we mentioned before, the basic observation behind our proof is that a second ``long'' segment exists in every path analyzed in \cite{B} by the triangle inequality. Our contribution starts at the conclusion of Bourgain's argument. Therefore we repeat Bourgain's argument here, and we include some additional details, before we show how to go a step further.\\

\textbf{Preparations.} From now on, we fix $\eps>0$, $n\geq 3$ and $x_0,x_n\in \Z^d$ with $x_0\neq x_n$. For every $1\leq j\leq n$, we let $K_j=\nabla_{i_j} (-\Delta)^{-1} \nabla_{i_j'}^*$ for some $1\leq i_j,i_j'\leq d$ whose values do not matter in what follows. 

\subsection{Dyadic decomposition by longest segment}
We begin by making precise the dyadic decomposition used by Bourgain to prove Lemma 1. It decomposes the paths $\ul{x}=(x_1,\ldots,x_{n-1})$ according to the dyadic scale of their ``longest'' segment $|x_{j}-x_{j+1}|$. 

We recall Definition \eqref{eq:fdefn}:
$$
\begin{aligned}
f_{(\Z^d)^{n-1}}(x_0,x_n)=P \sigma(x_0)\sum_{\ul{x}\in (\Z^d)^{n-1}}  K_1(x_0-x_1) P^\perp \sigma(x_1) K_2(x_1-x_2) \ldots \sigma(x_n),
\end{aligned}
$$
where we write $\ul{x}=(x_1,\ldots,x_{n-1})$. Using a dyadic decomposition according to the size of $\max_{0\leq j<n} |x_j-x_{j+1}|$, we may decompose the sum over paths as follows:
\beq\label{eq:sumdecomp}
f_{(\Z^d)^{n-1}}(x_0,x_n)
=\sum_{m=0}^\it \sum_{j_0=0}^{n-1}
f_{S_{j_0}^{m}}(x_0,x_n) .
\eeq
Here we introduced the family of disjoint sets
\beq\label{eq:Sjupperdefn}
\begin{aligned}
S_{j_0}^{m}
:=&\l\{\ul{x}\in (\Z^d)^{n-1}\,:\,
\max\limits_{0\leq  j<n}|x_j-x_{j+1}|< 2^{m+1},\quad 
|x_{j_0}-x_{j_0+1}|\geq 2^m\r.\\
&\;\qquad\qquad \l.\textnormal{ and } \max\limits_{0\leq j<j_0}|x_j-x_{j+1}|< 2^m\r\}.
\end{aligned}
\eeq
The last condition says that $j_0$ is minimal: it is the \emph{first} time that the path achieves the (dyadic scale of) the longest segment. The main objective is to estimate the sum over $S_{j_0}^{m}$ in \eqref{eq:sumdecomp}. In other words, we fix $m\geq0$, $0\leq j_0<n$ and focus on paths $\ul{x}$ such that
\beq\label{eq:Rdefn}
\max_{0\leq j< n} |x_j-x_{j+1}|\leq 2^{m+1},\qquad |x_{j_0}-x_{j_0+1}|\geq 2^m\quad \textnormal{ and $j_0$ is minimal.}
\eeq
(Compare eq.\ (4.2) in \cite{B}.)  

\subsection{Discarding reducible paths}
As we mentioned in the introduction, the main use of the probabilistic structure of the problem is that the contribution to \eqref{eq:fdefn} of every ``reducible'' path (i.e., a path that can be split into disjoint pieces) \emph{vanishes}, by Fubini's theorem. 


We define the family of disjoint sets
\beq\label{eq:Sdefn}
\tilde S_{j_0}^m:=\setof{\ul{x}\in S_{j_0}^{m}}{\{x_0,\ldots,x_{j_0}\}\cap \{x_{j_0+1},\ldots,x_{n}\}\neq \emptyset}.
\eeq

\be{lm}\label{lm:red1}
For all $m\geq0$ and $0\leq j_0< n$ we have
$$
f_{\tilde S_{j_0}^m}(x_0,x_n)
=f_{ S_{j_0}^m}(x_0,x_n).
$$
\e{lm}

\be{proof}
This holds because paths in $S_{j_0}^{m}\setminus \tilde S_{j_0}^{m}$ are reducible and thus do not contribute to the sum in \eqref{eq:fdefn}.
\e{proof}

Hence it suffices to prove the following estimate.

\be{prop}[Reduction 1]\label{prop:Skey}
Let $x_0\neq x_n$. For all $m\geq0$ and $0\leq j_0< n$, we have
\beq\label{eq:propSkey}
|f_{\tilde S_{j_0}^m}(x_0,x_n)|\leq \eps^3 \l(\frac{C}{\eps}\r)^{n}2^{-m(d-\eps)}\jap{x_0-x_n}^{-2d+\eps}.
\eeq
\e{prop}


Before we go on, we show that Proposition \ref{prop:Skey} implies the key estimate.

\be{proof}[Proof of Proposition \ref{prop:key} assuming Proposition \ref{prop:Skey}]\mbox{}\\
By \eqref{eq:sumdecomp}, Lemma \ref{lm:red1} and Proposition \ref{prop:Skey}, we have 
\beq\label{eq:combining}
\begin{aligned}
|f_{(\Z^d)^{n-1}}(x_0,x_n)|
\leq& \sum_{m=0}^\it \sum_{j_0=0}^{n-1} |f_{\tilde S_{j_0}^m}(x_0,x_n)|\\
\leq& n\eps^3 \l(\frac{C}{\eps}\r)^{n}\jap{x_0-x_n}^{-2d+\eps}  \sum_{\substack{m=0\\ \bigcup_{j_0}\tilde S_{j_0}^m\neq \emptyset} }^\it  2^{-m(d-\eps)}.
\end{aligned}
\eeq
From the Definition \eqref{eq:Sjupperdefn}, we see that $\bigcup_{j_0=0}^{n-1}\tilde S_{j_0}^m\neq \emptyset$ implies, by the triangle inequality, $|x_0-x_n|\leq n2^{m+1}$, or equivalently, 
$$
2^m\geq \frac{|x_0-x_n|}{2n}.
$$
We distinguish cases. If $|x_0-x_n|\leq 2n$, then we have $1\leq (1+2n)\jap{x_0-x_n}^{-1}$ and the claim follows easily from \eqref{eq:combining}. If $|x_0-x_n|>2n$, then, letting $q_n:=\ceil{\log_2(|x_0-x_n|/(2n))}$,
$$
\sum_{\substack{m=0\\ \bigcup_{j_0=0}\tilde S_{j_0}^m\neq \emptyset} }^\it  2^{-m(d-\eps)}
\leq \sum_{m=q_n}^{\it}2^{-m(d-\eps)}\leq C n^{d-\eps} \jap{x_0-x_n}^{-d+\eps}.
$$
Combining this with \eqref{eq:combining} yields the bound in Proposition \ref{prop:key}.
\e{proof}

We are left with the task of proving Proposition \ref{prop:Skey}. In the following, we always fix $m\geq0$ and $0\leq j_0< n$ and therefore we suppress them from the notation:
$$
S=\tilde S_{j_0}^m.
$$

\subsection{Decomposing the set $S$}

Observe that any path $\ul{x}\in S$ ($=\tilde S_{j_0}^m$ from \eqref{eq:Sdefn}) contains at least one coincidence point $x_{j_1}=x_{j_2}$, with $0\leq j_1\leq j_0$ and $j_0<j_2\leq n$. Following \cite{B}, we decompose the set $S$ according to where this coincidence occurs; see \eqref{eq:decomp1} below. (Afterwards, we show how an application of the triangle inequality implies that there exists a second ``long'' segment and so this procedure can be basically repeated; see \eqref{eq:decomp2} below.)

We define the sets
\beq\label{eq:Sjdefn}
\begin{aligned}
S_{j_1,j_2}:=&\setof{\ul{x}\in S}{x_{j_1}=x_{j_2}},\\
S_{j_0,j_1,j_2}':=&S_{j_1,j_2}\setminus\l(\bigcup_{\substack{j<j_1\\ j_0<j'\leq n}} S_{j,j'}\cup \bigcup_{j_0<j'<j_2} S_{j_1,j'}\r).
\end{aligned}
\eeq
The set $S_{j_1,j_2}$ implicitly depends on $j_0$ as well, due to \eqref{eq:Sdefn}. Recall also that $x_0\neq x_n$, and so $S_{0,n}=\emptyset$. 

The second family of sets is a ``disjointification'' of the first one. We split the path between $x_{j_0}$ and $x_{j_0+1}$, obtaining a ``left piece'' and a ``right piece''. The disjointness is achieved by taking $j_1$ and $j_2$ to be extremal: a path $\ul{x}\in S_{j_1,j_2}$ lies in $S_{j_0,j_1,j_2}'$ iff $j_1$ is the \emph{first} coincidence with the second piece and $j_2$ is the first coincidence with $j_1$.\\

We have the preliminary decomposition
\beq\label{eq:decomp1}
S=\bigcup_{\substack{0\leq j_1\leq j_0\\ j_0<j_2\leq n}} S_{j_1,j_2}=\bigsqcup_{\substack{0\leq j_1\leq j_0\\ j_0<j_2\leq n}} S_{j_0,j_1,j_2}',
\eeq
where $\sqcup$ denotes a disjoint union.\\

Now we depart from the line of argument in \cite{B} and decompose each set $S_{j_0,j_1,j_2}'$ further. We denote
$$
r:=|x_0-x_n|.
$$
Recall that $x_0\neq x_n$ and so $r>0$. The central observation is 

\be{lm}\label{lm:triangle}
Let $\ul{x}\in S_{j_1,j_2}$. Then there exists 
$$
k\in \{0,\ldots,j_1-1\}\cup\{j_2,\ldots,n-1\}
$$
such that
$$
|x_{k}-x_{k+1}|\geq \frac{r}{n}.
$$
\e{lm}

\be{proof}
From $x_{j_1}=x_{j_2}$ and the triangle inequality, we have
$$
r=|x_0-x_n|\leq \sum_{k=0}^{j_1-1}|x_k-x_{k+1}|+\sum_{k=j_2}^{n-1}|x_k-x_{k+1}|.
$$
Hence, at least one of the (at most $n$) terms on the right-hand side must exceed $r/n$. 
\e{proof}

Thanks to Lemma \ref{lm:triangle}, we can decompose the set $S_{j_0,j_1,j_2}'$ further, according to the minimal $k$ satisfying $|x_{k}-x_{k+1}|\geq \frac{r}{n}$. Namely, we define
\beq\label{eq:Sk0defn}
\begin{aligned}
S_{k_0}
:=&\l\{\ul{x}\in (\Z^d)^{n-1}\,:\,
 |x_{k_0}-x_{k_0+1}|\geq \frac{r}{n}
 \textnormal{ and } (*) \textnormal{ holds}\r\}.
\end{aligned}
\eeq
Here $(*)$ encodes the minimality of $k_0$, i.e.,
$$
\begin{aligned}
(*):=
\be{cases}
\max\limits_{0\leq j<k_0}|x_j-x_{j+1}|< \frac{r}{n},\qquad &\textnormal{if } 0\leq k_0<j_1,\\
\max\l\{\max\limits_{0\leq j<j_1}|x_j-x_{j+1}|,\max\limits_{j_2\leq j< k_0}|x_j-x_{j+1}|\r\}< \frac{r}{n},\qquad &\textnormal{if } j_2\leq k_0<n.
\e{cases}
\end{aligned}
$$
Using this, we may refine the preliminary decomposition \eqref{eq:decomp1} as follows:
\beq\label{eq:decomp2}
S=\bigsqcup_{\substack{0\leq j_1\leq j_0\\ j_0<j_2\leq n}} \bigsqcup_{k_0\in ([0,j_1-1]\cup[j_2,n-1])\cap \Z}S_{j_0,j_1,j_2}'\cap S_{k_0},
\eeq
 Note that the union over $k_0$ is indeed disjoint because $k_0$ is chosen minimally.
 
 \subsection{Discarding more reducible paths}
We employ the decomposition \eqref{eq:decomp2} and discard more reducible paths to make a further reduction from Proposition \ref{prop:Skey}.  
 
\be{prop}[Reduction 2]\label{prop:red2}
Let $m\geq0$, $0\leq j_0< n$. Let $j_1,j_2,k_0,k_1,k_2$ be integers such that $0\leq j_1\leq j_0<j_2\leq n$, $k_0\in ([0,j_1-1]\cup[j_2,n-1])\cap \Z$ and $0\leq k_1\leq k_0<k_2\leq n$. Then
\beq\label{eq:propred2}
|f_{S_{j_0,j_1,j_2}'\cap S_{k_0}\cap S_{k_0,k_1,k_2}'}(x_0,x_n)|\leq \eps^3\l(\frac{C}{\eps}\r)^{n} 2^{-m(d-\eps)}\jap{x_0-x_n}^{-2d+\eps}.
\eeq
\e{prop} 
 
We show that Reduction 2 implies Reduction 1 (and hence the main claim).

\be{proof}[Proof of Proposition \ref{prop:Skey} assuming Proposition \ref{prop:red2}]
 We define the set
 \beq\label{eq:Stildedefn}
 \begin{aligned}
 \tilde S:=&\bigsqcup_{\substack{0\leq j_1\leq j_0\\ j_0<j_2\leq n}} \bigsqcup_{k_0\in ([0,j_1-1]\cup[j_2,n-1])\cap \Z}
 \l(S_{j_0,j_1,j_2}'\cap S_{k_0}\r.\\
 &\qquad\qquad\qquad\qquad\l.\cap \setof{\ul{x}}{\{x_0,\ldots,x_{k_0}\}\cap\{x_{k_0+1},\ldots,x_n\}\neq \emptyset}\r).
\end{aligned} 
 \eeq

 We have the following analog of Lemma \ref{lm:red1}:
 
 \be{lm}\label{lm:red2}
 For all $m\geq0$ and $0\leq j_0< n$, we have
 $$
 f_{S}(x_0,x_n)=f_{\tilde S}(x_0,x_n).
 $$
 \e{lm}
This lemma holds because $S\setminus \tilde S$ consists of reducible paths and therefore does not contribute to the sum in \eqref{eq:fdefn}.

Now we recall Definition \eqref{eq:Sjdefn}. We have the finer decomposition
 $$
 \begin{aligned}
 \tilde S=&\bigsqcup_{\substack{0\leq j_1\leq j_0\\ j_0<j_2\leq n}} \bigsqcup_{k_0\in ([0,j_1-1]\cup[j_2,n-1])\cap \Z}\bigcup_{\substack{0\leq k_1\leq k_0\\ k_0<k_2\leq n}} S_{j_0,j_1,j_2}'\cap S_{k_0}\cap S_{k_1,k_2}\\
 =&\bigsqcup_{\substack{0\leq j_1\leq j_0\\ j_0<j_2\leq n}} \bigsqcup_{k_0\in ([0,j_1-1]\cup[j_2,n-1])\cap \Z}\bigsqcup_{\substack{0\leq k_1\leq k_0\\ k_0<k_2\leq n}} S_{j_0,j_1,j_2}'\cap S_{k_0}\cap S_{k_0,k_1,k_2}'.
\end{aligned} 
 $$
Combining Lemma \ref{lm:red2} with this gives
$$
\begin{aligned}
f_{S}(x_0,x_n)=&f_{\tilde S}(x_0,x_n)\\
=&\sum_{\substack{0\leq j_1\leq j_0\\ j_0<j_2\leq n}} \sum_{k_0\in ([0,j_1-1]\cup[j_2,n-1])\cap \Z}\sum_{\substack{0\leq k_1\leq k_0\\ k_0<k_2\leq n}} f_{S_{j_0,j_1,j_2}'\cap S_{k_0}\cap S_{k_0,k_1,k_2}'}(x_0,x_n).
\end{aligned}
$$
Since the total number of summands is bounded by $C^{n}$, \eqref{eq:propred2} implies \eqref{eq:propSkey} and hence Proposition \ref{prop:Skey}.
\e{proof}

In the following section, we give the proof of Proposition \ref{prop:red2}, and this will imply Theorem \ref{thm:main}.

\subsection{Proof of Proposition \ref{prop:red2}}
In this section, we finally see the computation where the gain of $-d$ in the decay exponent comes from. At this point, we have used the randomness sufficiently to our advantage and it is enough to prove a deterministic statement. Indeed, Lemma \ref{lm:transfer} reduces Proposition \ref{prop:red2} to the following estimate
\begin{equation}\label{eqn:red3}
|T^n_{X'}(x_0,x_n)|\leq \eps^3\l(\frac{C}{\eps}\r)^{n} 2^{-m(d-\eps)}\jap{x_0-x_n}^{-2d+\eps},
\end{equation}
where $X':= S_{j_0,j_1,j_2}' \cap S_{k_0}\cap S_{k_0,k_1,k_2}'$. For this, we use a two-step strategy as in \cite{B}. First, we prove the claimed bound for $T^n_{S_{j_1,j_2} \cap S_{k_0}\cap S_{k_1,k_2}}$. Next, we lift this to the bound for $T^n_{X'}$ using Lemma \ref{lm:bourgain2}.

\be{lm}\label{lm:removerandom} Let $T^n_X$ be as in Lemma \ref{lm:transfer} and $m,j_0,j_1,j_2,k_0,k_1,k_2$ be as in Proposition \ref{prop:red2}. Then
$$
|T^n_{S_{j_1,j_2}\cap S_{k_0} \cap S_{k_1,k_2}}(x_0,x_n)|\leq \eps^3\l(\frac{C}{\eps}\r)^{n} 2^{-m(d-\eps)}\jap{x_0-x_n}^{-2d+\eps}.
$$
Moreover, this bound is stable under the choice of functions $\{b_j\}_{1\leq j\leq n}$ with $\norm{b_j}_\infty \leq 1$ and under the replacement
$$
K^j(x,y) \to  e^j_{1}(x)K^j(x,y)e^j_{2}(y)
$$
for any $e^j_{1},e^j_{2}$ satisfying $\norm{e^j_{1}}_{\ell^\infty} \leq 1$ and $\norm{e^j_{2}}_{\ell^\infty} \leq 1$. 
\e{lm}

We first show that \eqref{eqn:red3} follows easily from Lemma \ref{lm:removerandom} via Lemma \ref{lm:bourgain2}. 
\be{proof}[Proof of \eqref{eqn:red3}] 
We use Lemma \ref{lm:bourgain2}. We define the sets
$$
\begin{aligned}
E_1:=\{0,1,\ldots,j_1-1\} \quad \textnormal{ and }&\quad F_1:=\{j_0+1,\ldots,n\},\\
E_2:=\{j_1\} \quad \textnormal{ and }&\quad F_2:=\{j_0+1,\ldots,j_2-1\},\\
E_3:=\{0,1,\ldots,k_1-1\} \quad \textnormal{ and }&\quad F_3:=\{k_0+1,\ldots,n\},\\
E_4:=\{k_1\} \quad \textnormal{ and }&\quad F_4:=\{k_0+1,\ldots,k_2-1\}.
\end{aligned}
$$
These are chosen such that 
$$
X' = X\cap\bigcap_{l=1}^4 \{ \ux\in (\Z^d)^{n-1}: \{ x_u : u\in E_l \} \cap \{ x_v : v\in F_l \} = \emptyset \},
$$
where $X:=S_{j_1,j_2}\cap S_{k_0} \cap S_{k_1,k_2}$. Note that $X$ is a finite set and $\sum_{l=1}^4 |E_l|+|F_l| \leq 4n$. Therefore, \eqref{eqn:red3} follows from Lemma \ref{lm:removerandom} and Lemma \ref{lm:bourgain2}. 
\e{proof}

Finally, we prove Lemma \ref{lm:removerandom}, completing the proof of Theorem \ref{thm:main}. 
\be{proof}[Proof of Lemma \ref{lm:removerandom}] 
To bound 
$\curly{Q}:=T^n_{S_{j_1,j_2}\cap S_{k_0} \cap S_{k_1,k_2}}(x_0,x_n)$, we have to implement the constraints arising from $\ul{x}\in S_{j_1,j_2}\cap S_{k_0} \cap S_{k_1,k_2}$. Two of them are trivial: Writing $\ind_X$ for the indicator function of a set $X$, we have
$$
\ind_{S_{j_1,j_2}\cap S_{k_0} \cap S_{k_1,k_2}}
= \ind_{\setof{\ul{x}}{x_{j_1}=x_{j_2}}} \ind_{\setof{\ul{x}}{x_{k_1}=x_{k_2}}}\ind_{S^{m}_{j_0}\cap S_{k_0}}.
$$
The remaining constraint is that $\ul{x}\in S^{m}_{j_0}\cap S_{k_0}$; these sets were defined in \eqref{eq:Sjupperdefn} and \eqref{eq:Sk0defn}. We write these constraints as intersections of ``local'' constraints, i.e., ones that only depend on a single segment $|x_j-x_{j+1}|$. Namely, we have
$$
\begin{aligned}
S^{m}_{j_0}
=&\bigcap_{0\leq j<j_0} \setof{\ul{x}}{|x_j-x_{j+1}|<2^m} \cap \bigcap_{j_0< j<n} \setof{\ul{x}}{|x_j-x_{j+1}|<2^{m+1}}\\
 &\cap \setof{\ul{x}}{2^m\leq |x_{j_0}-x_{j_0+1}|< 2^{m+1}}.
\end{aligned}
$$
and
$$
\begin{aligned}
S_{k_0}
=&\setof{\ul{x}}{|x_{k_0}-x_{k_0+1}|\geq \frac{r}{n}}\\ 
&\cap\be{cases}
\bigcap\limits_{0\leq j<k_0} \setof{\ul{x}}{|x_j-x_{j+1}|<\frac{r}{n}},\qquad &\textnormal{if } 0\leq k_0<j_1,\\
\bigcap\limits_{\substack{0\leq j<j_1,\\ j_2\leq j<k_0}} \setof{\ul{x}}{|x_j-x_{j+1}|<\frac{r}{n}},\qquad &\textnormal{if } j_2\leq k_0<n.
\e{cases}
\end{aligned}
$$
These expressions imply that 
$$
\ind_{S^{m}_{j_0}\cap S_{k_0}}(\ul{x})=\prod_{j=0}^{n-1} \ind_{I_j}(|x_{j}-x_{j+1}|),
$$
for appropriate intervals $I_j$ (which also depend on $m,n,r,j_0,k_0$).

We have
\beq\label{eq:A2}
\begin{aligned}
\curly{Q}=\sum\limits_{\substack{\ul{x}\in (\Z^d)^{n-1}:\\  x_{j_1}=x_{j_2}, x_{k_1}=x_{k_2}}}  L^1(x_0,x_1) L^2(x_2,x_3) \ldots L^n(x_{n-1},x_n),
\end{aligned}
\eeq
where we introduced the operators $L^j$ with kernels $L^j(x,y)=\ind_{I_j}(|x-y|) K^j(x,y)$.\\

Recall that any path $\ul{x}$ under consideration contains the two ``long'' segments
\beq\label{eq:long}
|x_{j_0}-x_{j_0+1}|\geq R, \qquad
|x_{k_0}-x_{k_0+1}|\geq \frac{r}{n},
\eeq
and $\max_j |x_j-x_{j+1}|\leq 2R$, where we set $R:= 2^m$. From here on, the only data that matters is the collection of relevant times $\{0,n,j_0,j_1,j_2,k_0,k_1,k_2\}$ and their ordering (subject to the usual constraints). By symmetry (we may invert the path), we can assume that $k_0\in [j_2,n-1]\cap \Z$.\\

\dashuline{Case 1:}  Assume that $j_0+1\leq k_1\leq j_2$, so the ``relevant times'' are ordered as follows
\beq\label{eq:1}
0\leq j_1\leq j_0<j_0+1\leq k_1\leq j_2\leq k_0<k_0+1\leq k_2\leq n.
\eeq

We first consider the case where all the relevant times are different, i.e., 
\beq\label{eq:1.1}
0<j_1<j_0<j_0+1<k_1<j_2<k_0<k_0+1<k_2<n
\eeq
and then later indicate necessary modifications for the general case \eqref{eq:1}. Recall that $x_{j_1}=x_{j_2}$ and $x_{k_1}=x_{k_2}$. We denote by $v$ the vector \[ v=(x_{j_0},x_{j_0+1},x_{j_1},x_{k_0},x_{k_0+1},x_{k_1})\in (\Z^d)^6\] and then group the propagators $L^j$ together so that each group corresponds to a time interval in \eqref{eq:1.1}, i.e.,


\beq\label{eq:1written}
\begin{aligned}
\curly{Q}
=&\sum_{v\in (\Z^d)^6}  
\l(\prod_{j=1}^{j_1} L^j\r)(x_0,x_{j_1})
\l(\prod_{j=j_1+1}^{j_0} L^j\r)(x_{j_1},x_{j_0}) 
L^{j_0+1}(x_{j_0},x_{j_0+1})\\ 
&\l(\prod_{j=j_0+2}^{k_1} L^j\r)(x_{j_0+1},x_{k_1})
\l(\prod_{j=k_1+1}^{j_2}L^j\r)(x_{k_1},x_{j_1})
\l(\prod_{j=j_2+1}^{k_0} L^j\r)(x_{j_1},x_{k_0})\\
&L^{k_0+1}(x_{k_0},x_{k_0+1})
\l(\prod_{j=k_0+2}^{k_2} L^j\r)(x_{k_0+1},x_{k_1})
\l(\prod_{j=k_2+1}^{n} L^j\r)(x_{k_1},x_{n}).
\end{aligned}
\eeq

It is important that we retain some of the information contained within the constraints that $|x_{j}-x_{j+1}|\in I_j$ for all $j$. Namely, we need the fact that all the action takes place within some large ball in $\Z^d$. Let $B_\rho(x_0)\subset \Z^d$ be the ball of radius $\rho>0$ around $x_0$. By $\max_j |x_j-x_{j+1}|\leq 2R$ and the triangle inequality, we have 
\beq\label{eq:ball}
x_j\in B_{2nR}(x_0)
\eeq
for all $1\leq j\leq n$. Therefore, we may replace the sum $\sum_{v\in (\Z^d)^6}$ by $\sum_{v\in \curly{B}_n}$ in \eqref{eq:1written}, where $\curly{B}_n:=(B_{2nR}(x_0))^6$. 


We are now in a position to apply Corollary \ref{cor:bourgain}. Note that the operators $L^j(x,y)=\ind_{I_j}(|x-y|) K^j(x,y)$ are equal to $K^j_{I_j}$ from \eqref{eq:KIdefn}. From Corollary \ref{cor:bourgain} and \eqref{eq:long}, we get
\beq\label{eq:1.1bound}
\begin{aligned}
|\curly{Q}|\leq &\l(\frac{C}{\eps}\r)^{n} \eps^9 R^{-d}\l(\frac{r}{n}\r)^{-d}\\
&\times \sum_{v\in \curly{B}_n}
\qmexp{x_0-x_{j_1}}^{-d+\eps}\qmexp{x_{j_1}-x_{j_0}}^{-d+\eps}\qmexp{x_{j_0+1}-x_{k_1}}^{-d+\eps}\qmexp{x_{k_1}-x_{j_1}}^{-d+\eps}
\\&\qquad\quad
\qmexp{x_{j_1}-x_{k_0}}^{-d+\eps}\qmexp{x_{k_0+1}-x_{k_1}}^{-d+\eps}\qmexp{x_{k_1}-x_{n}}^{-d+\eps}.
\end{aligned}
\eeq
We can bound the sums over $x_{j_0},x_{j_0+1},x_{k_0},x_{k_0+1}$ all in the same way. E.g., using that $|x_{j_1}-x_{j_0}|\leq |x_{j_1}-x_{0}|+|x_{0}-x_{j_0}|\leq 4nR$, we have
\beq\label{eq:innerbound}
\sum_{x_{j_0}\in B_{2nR}(x_0)}\qmexp{x_{j_1}-x_{j_0}}^{-d+\eps}
\leq \sum_{y\in B_{4nR}(0)}\qmexp{y}^{-d+\eps}\leq \frac{C^n}{\eps} R^{\eps}.
\eeq
From the bound \eqref{eq:innerbound} and its analogs for $x_{j_0+1},x_{k_0},x_{k_0+1}$, we get
\beq\label{eq:1.1samebound}
\begin{aligned}
|\curly{Q}|\leq &
\l(\frac{C}{\eps}\r)^{n} \eps^5 R^{-d+4\eps}r^{-d}\\
&\times\sum_{x_{j_1},x_{k_1}\in B_{2nR}(x_n)}
\qmexp{x_0-x_{j_1}}^{-d+\eps}\qmexp{x_{k_1}-x_{j_1}}^{-d+\eps}\qmexp{x_{k_1}-x_{n}}^{-d+\eps}.\\
\end{aligned}
\eeq
Using Lemma \ref{lm:sum} twice, we get 
\beq\label{eq:1.1samebound2}
|\curly{Q}| \leq \l(\frac{C}{\eps}\r)^{n} \eps^3 R^{-d+4\eps} r^{-2d+3\eps}. 
\eeq
(We mention that it is possible to replace Lemma \ref{lm:sum} by an elementary observation: Since $|x_0-x_{j_1}|+|x_{k_1}-x_{j_1}|+|x_{k_1}-x_{n}|\geq r$, at least one of these three distances is $\geq r/3$. Implementing this and summing over $x_{j_1},x_{k_1}\in B_{2nR}(x_n)$ gives \eqref{eq:1.1samebound2} with an additional, and irrelevant, $R^{2\eps}$ factor on the right-hand side.)


Next, we turn to the general case \eqref{eq:1}, where some of the relevant times may coincide. We note that  \eqref{eq:1written} is still valid in the general case under the convention that 
\beq\label{eq:simply}
\l(\prod_{j=a+1}^{a} L^j\r) (x_{a}, x_a) \equiv 1.
\eeq

Now we argue why the occurrence of any such coincidences does not change the final bound, \eqref{eq:1.1samebound2}.

Let $A_1, A_2, A_3, A_4$ denote the cases $j_0 = j_1$, $j_0+1 = k_1$, $k_0 = j_2$, $k_0+1 = k_2$, respectively. In addition, let $B_1,B_2,B_3$ denote the cases $j_1=0$, $k_2=n$, $k_1= j_2$, respectively. Then each possible combination of coincidences of the relevant times in \eqref{eq:1} corresponds to a subset of $\{A_1, A_2, A_3, A_4,B_1,B_2,B_3\}$. So far, we considered the case of no coincidences, \eqref{eq:1.1}.

For each occurrence of $A_1,A_2,A_3,A_4, B_1,B_2,B_3$, we have the trivial identity \eqref{eq:simply} instead of having to apply Corollary \ref{cor:bourgain}. Effectively, this amounts to multiplying each summand in \eqref{eq:1.1bound} by $\epsilon^{-1} \delta_{x_a}(x_b)$ for appropriate $a,b$. (Here we denoted by $\delta_x(y)$ the delta function: $\delta_x(y)$ is $1$ if $x=y$ and $0$ otherwise.) For instance, $A_1$ and $B_1$ produce the factors $\epsilon^{-1} \delta_{x_{j_0}}(x_{j_1})$ and $\epsilon^{-1} \delta_{x_{j_1}}(x_{0})$, respectively.

Thus we need to show that the factor $\epsilon^{-1} \delta_{x_a}(x_b)$ leads to the same bound as before, \eqref{eq:1.1samebound2}. 

Consider the case $A_1$, which gives $\epsilon^{-1} \delta_{x_{j_0}}(x_{j_1})$. This is to be compared with how we treated the original expression in \eqref{eq:innerbound}, where the disappearance of the sum over $x_{j_0}$ may alternatively expressed as a bound in terms of $C^n R^\eps\epsilon^{-1} \delta_{x_{j_0}}(x_{0})$. Since $1\leq C^nR^\eps$, we get the same bound, no matter whether $A_1$ occurs or not. The same argument works for $A_2,A_3,A_4$.

To summarize this part, we always get \eqref{eq:1.1samebound} (modified by the appropriate delta functions coming from the cases $B_1, B_2, B_3$), no matter which subset of cases the $A_1,A_2,A_3,A_4$ occurs.

Finally, we come to the cases $B_1, B_2, B_3$. Notice that at most two of them may occur simultaneously because $x_0\neq x_n$. Consider the case where just $B_1$ occurs, i.e., \eqref{eq:1.1samebound} comes with an additional factor $\epsilon^{-1} \delta_{x_{j_1}}(x_{0})$:
$$
\begin{aligned}
|\curly{Q}|\leq &
\l(\frac{C}{\eps}\r)^{n} \eps^4 R^{-d+4\eps}r^{-d}\\
&\times\sum_{x_{j_1},x_{k_1}\in B_{2nR}(x_n)}\delta_{x_{j_1}}(x_{0})
\qmexp{x_0-x_{j_1}}^{-d+\eps}\qmexp{x_{k_1}-x_{j_1}}^{-d+\eps}\qmexp{x_{k_1}-x_{n}}^{-d+\eps}.\\
=&\l(\frac{C}{\eps}\r)^{n} \eps^4 R^{-d+4\eps}r^{-d}\sum_{x_{k_1}\in B_{2nR}(x_n)}\qmexp{x_{k_1}-x_{0}}^{-d+\eps}\qmexp{x_{k_1}-x_{n}}^{-d+\eps}.
\end{aligned}
$$
Lemma \ref{lm:sum} then yields \eqref{eq:1.1samebound2}. Similar considerations imply \eqref{eq:1.1samebound2} for all the other cases as well.\\
 
 \dashuline{Case 2:} Assume that either $0\leq k_1\leq j_0$ or $j_2< k_1\leq k_0$. We may follow exactly the same steps as in Case 1, unless $0\leq k_1 < j_1$, so we assume this in the following. We start by discussing the case, where all the relevant times are different, i.e., 
 $$
 0<k_1<j_1< j_0<j_0+1< j_2< k_0<k_0+1< k_2<n.
 $$

Arguing as in Case 1 and after summing over $x_{j_0},x_{j_{0}+1},x_{k_0},x_{k_0+1}$, we may bound $|\curly{Q}|$ by a slightly different expression (compared to what we got in \eqref{eq:1.1samebound}):
$$
\begin{aligned}
|\curly{Q}|
\leq 
&\l(\frac{C}{\eps}\r)^{n} \eps^5 R^{-d+4\eps}r^{-d}\\
&\times\sum_{x_{j_1},x_{k_1}\in B_{2nR}(x_n)}
\qmexp{x_0-x_{k_1}}^{-d+\eps}\qmexp{x_{k_1}-x_{j_1}}^{-d+\eps}\qmexp{x_{k_1}-x_{n}}^{-d+\eps}\\
\leq 
&\l(\frac{C}{\eps}\r)^{n} \eps^4 R^{-d+5\eps}r^{-d}\sum_{x_{k_1}\in B_{2nR}(x_n)}\qmexp{x_0-x_{k_1}}^{-d+\eps}\qmexp{x_{k_1}-x_{n}}^{-d+\eps}\\
\leq &\l(\frac{C}{\eps}\r)^{n} \eps^3 R^{-d+5\eps}r^{-2d+2\eps}.
\end{aligned}
$$ 

We may also treat the case where some of the relevant times may coincide as in Case 1. Finally, the stability of the bound is a consequence of Corollary \ref{cor:bourgain}. This finishes the proof of Lemma \ref{lm:removerandom}.
 \e{proof}



\section{Proof of Theorem \ref{thm:nlogn} -- partitioning the set of irreducible paths}\label{sec:main2}
In this section, we prove Theorem \ref{thm:nlogn}, which is an immediate consequence of the following decomposition result for $U$.
\begin{prop}\label{prop:decomposition} Let $n\geq 3$ and $T^n_S$ be as in Definition \eqref{eq:tns}. For each $x_0,x_n \in \Z^d$, $x_0\neq x_n$, there is a partition of the set of irreducible paths $U$ into $O(2^n)$ many disjoint subsets $\{U_\alpha' \}_{\alpha\in \mathcal{A}}$ such that
\begin{equation}\label{eqn:setbound}
\max_{\alpha\in \mathcal{A}} |T^n_{U_\alpha'}(x_0,x_n)| \leq C^{n\log n} \epsilon^{3-n} \br{x_0 - x_n}^{-3d+\epsilon}
\end{equation}
for all sufficiently small $\epsilon>0$. 
\end{prop}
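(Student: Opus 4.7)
The plan is to mirror the strategy of the proof of Theorem \ref{thm:main}, but applied directly to $U$ without recourse to the probabilistic cancellations (which have already been ``baked in'' to the definition of $U$ via the vanishing identity \eqref{eqn:cancellation}). The argument will have two phases: first, a non-disjoint decomposition $U = \bigcup_{\alpha\in\mathcal{A}} U_\alpha$ where each piece admits the desired decay; second, a disjointification producing $U = \bigsqcup_\alpha U_\alpha'$, accumulating the $C^{n\log n}$ overhead.

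For the first phase, I would index $\mathcal{A}$ by tuples $\alpha = (j_0, j_1, j_2, k_0, k_1, k_2)$ satisfying the natural ordering constraints $0\leq j_1 \leq j_0 < j_2 \leq n$, $0\leq k_1 \leq k_0 < k_2 \leq n$, and $k_0\in[0,j_1-1]\cup[j_2,n-1]$, and define $U_\alpha$ as the set of $\ul{x}$ satisfying the two coincidences $x_{j_1}=x_{j_2}$ and $x_{k_1}=x_{k_2}$ together with the long-segment lower bounds $|x_{j_0}-x_{j_0+1}|, |x_{k_0}-x_{k_0+1}| \geq |x_0-x_n|/n$. The inclusion $U\subseteq \bigcup_\alpha U_\alpha$ follows from irreducibility (providing the first coincidence at any choice of longest segment $j_0$) combined with Lemma \ref{lm:triangle} applied to the reduced path (providing the second long segment $k_0$) and then another coincidence around $k_0$. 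The count $\#\mathcal{A} = O(n^6) \leq 2^n$ holds for $n$ large. For each $\alpha$, the bound
\[
|T^n_{U_\alpha}(x_0,x_n)| \leq \epsilon^3 \left(\frac{C}{\epsilon}\right)^n \langle x_0-x_n\rangle^{-3d+\epsilon}
\]
will follow from essentially the same computation as in the proof of Lemma \ref{lm:removerandom}: the constraints defining $U_\alpha$ are precisely those appearing in Case 1/Case 2 of that proof (two coincidence equalities, two magnitude lower bounds), and so Corollary \ref{cor:bourgain} together with Lemma \ref{lm:sum} produces the required $\langle x_0-x_n\rangle^{-3d+\epsilon}$ decay.

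The main obstacle is the disjointification step. To define $U_\alpha' \subseteq U_\alpha$ making the collection disjoint, one must impose, for each path in $U_\alpha'$, that $\alpha$ is the canonical tuple, e.g.\ that $j_0$ is the \emph{first} index at which $|x_j-x_{j+1}|$ is maximal and $(j_1,j_2)$ is the extremal coincidence pair, and similarly for $k_0, k_1, k_2$. The disjointness conditions $x_{a} \neq x_{b}$ for index pairs preceding $(j_1,j_2)$ or $(k_1,k_2)$ fit the hypotheses of Lemma \ref{lm:bourgain2} directly and contribute only a factor $2^{O(n)}$. The genuine difficulty lies in enforcing maximality at $j_0$: this requires comparing the length of $|x_{j_0}-x_{j_0+1}|$ to all other $|x_j - x_{j+1}|$, and since these lengths take values in the unbounded set $[0,\infty)$, the comparison must be implemented via a dyadic subdivision of magnitudes into $O(\log n)$ scales. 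At each scale, a further application of Lemma \ref{lm:bourgain2} costs a factor $2^{O(n)}$, and compounding over the $O(\log n)$ scales yields the overhead $C^{n\log n}$, producing \eqref{eqn:setbound}.

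I expect the dyadic step to be the decisive source of inefficiency: reducing $C^{n\log n}$ to $C^n$ (which by the reasoning in Section \ref{sec:question} would recover Theorem \ref{thm:main} by this route) appears to require a fundamentally different handling of the magnitude ordering than can be produced by iterated applications of Lemma \ref{lm:bourgain2}.
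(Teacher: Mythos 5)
Your proposal takes a genuinely different route from the paper, but it has a gap at the disjointification step that I do not see how to repair within the paper's toolkit.

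Your first phase covers $U$ by sets $U_\alpha$ indexed by tuples $(j_0,j_1,j_2,k_0,k_1,k_2)$ carrying two coincidence equalities and two magnitude lower bounds, mimicking the structure inside the proof of Theorem \ref{thm:main}. As you correctly foresee, the difficulty is then to disjointify by enforcing that $j_0$ indexes the \emph{first longest} segment. This is a magnitude-ordering constraint, and Lemma \ref{lm:bourgain2} --- the only disjointification device available --- handles only non-coincidence constraints of the form $\{x_u : u\in E_l\} \cap \{x_v: v\in F_l\}=\emptyset$. It cannot encode ``$|x_{j_0}-x_{j_0+1}|$ is maximal and $j_0$ minimal.'' Your suggested fix, a dyadic subdivision of magnitudes into $O(\log n)$ scales, does not close this gap: segment lengths range over the unbounded set $[0,\infty)$, so the number of dyadic scales is unbounded (not $O(\log n)$), and cutting each segment to a dyadic annulus produces infinitely many pieces, which is incompatible with the $O(2^n)$-cardinality partition the proposition requires. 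In the proof of Theorem \ref{thm:main}, dyadic magnitude cuts are implemented directly via Corollary \ref{cor:bourgain} (replacing $K^j$ by $K^j_{I_j}$) and the infinite sum over scales is paid for afterwards --- but that scaffolding yields a single estimate for $f_{(\Z^d)^{n-1}}$, not a finite partition of $U$.

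The paper avoids magnitude constraints entirely in the decomposition of $U$. The key observation is that an irreducible path from $x_0$ to $x_n$ must visit $x_n$ early and revisit $x_0$ late: with $V_{i,j}:=\{\ux: x_i=x_n,\ x_j=x_0\}$ one has $U\subset \bigcup_{i,j}V_{i,j}$, and $T^n_{V_{i,j}}(x_0,x_n)$ factors as a product of three sub-chains $x_0\to x_n\to x_0\to x_n$, each contributing a factor $\langle x_0-x_n\rangle^{-d+\eps}$ by Corollary \ref{cor:bourgain} --- the $-3d+\eps$ decay appears immediately, with no long-segment bookkeeping or balls $B_{2nR}$. For the sub-collection where the first return to $x_n$ occurs after the last visit to $x_0$ (i.e.\ $j<i$), an iterative ``Procedure'' extracts nested coincidence pairs $(i_l,j_l)$ with $x_{i_l}=x_{j_l}$, still purely coincidence-based, giving $\#\mathcal{A}\leq 2^{n-1}$. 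All the disjointified sets $V'_{i,j}$, $\mathcal{S}'_{i_l,j_l}$ are carved out by non-coincidence conditions that fit Lemma \ref{lm:bourgain2} exactly. The $C^{n\log n}$ overhead then has a different origin than in your sketch: a naive count gives $\sum_l(|E_l|+|F_l|)=O(n^2)$ in Lemma \ref{lm:bourgain2}, and Lemma \ref{lem:effi} reorganizes the same intersection of constraints via a divide-and-conquer recursion at a median index to bring this total down to $O(n\log n)$. That is a combinatorial rearrangement of index sets, not a dyadic-in-magnitude argument.
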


We prove Proposition \ref{prop:decomposition} in the following subsections by explicitly constructing the sets $U_\alpha'$.
\subsection{First decomposition}
In the following, we write  $``\{\ux:\ldots\}"$ for $``\{\ul{x}=(x_1,\cdots,x_{n-1}) \in  (\Z^d)^{n-1}:\ldots\}"$. Define the sets
\begin{align*}
V_{i,j}:=& \{ \ux: x_i = x_n , x_j=x_0 \}, \\
V_{i,j}' :=& V_{i,j} \cap \{ \ux: x_0 \notin \{x_{j+1},x_{j+2}, \ldots, x_{n-1} \}\}\\ 
&\quad\cap \{ \ux: x_n \notin \{x_{1},x_2,\ldots,x_{i-1} \}\}.
\end{align*}
In other words, if $\ux\in V_{i,j}'$, then
\[ i = \min\{ l:x_l = x_n \} \;\; \text{ and }\;\; j = \max\{ l: x_l = x_0 \},. \]
The sets $\{ V_{i,j}' \}_{0<i,j<n}$ are disjoint, and we have
$$
\bigsqcup_{0<i<j<n} V_{i,j}' \subset U\subset\bigcup_{0<i,j<n} V_{i,j} = \bigsqcup_{0<i,j<n} V_{i,j}'.
$$
 Therefore, we can decompose 
\begin{equation}\label{eqn:firstdecompo}
U= \l(\bigsqcup_{0<i<j<n} V_{i,j}'\r)\sqcup\l( \bigsqcup_{1<j<i<n-1} U \cap 
V_{i,j}'\r)
\end{equation}
with the observation that $U \cap V_{i,1}' = U \cap V_{n-1,j}' = \emptyset$.

\subsection{A further decomposition}
In this subsection, we further decompose the set 
\[U':=\bigsqcup_{1<j<i<n-1} U \cap 
V_{i,j}'.\] 

{\bf Procedure.} Fix $\ux\in U \cap V_{i,j}'$ for some $j<i$. Since $\ux$ is irreducible, there should exist $i_1<j$ and $j_1 >j$ such that $x_{i_1} = x_{j_1}$. We define
\[ j_1 = \max \{ l : l>j  \text{ and } x_l = x_{i_1} \text{ for some } 0<i_1<j \} \]
and then
\[ i_1 = \min \{ l : 0<l<j \text{ and } x_l = x_{j_1}  \}. \]

Note that, by definition, 
\begin{equation} \label{eqn:max}
\{ x_1,x_2,\ldots,x_{j-1} \} \cap \{ x_{j_1+1}, x_{j_1+2},\ldots,x_{n-1} \} = \emptyset.
\end{equation}

We have the following two alternatives ($j_1 \neq i$ due to the condition imposed on $V_{i,j}'$).
\begin{enumerate}
\item $j_1>i$: we stop with a single pair $(i_1,j_1)$. 
\item $j_1<i$: we continue to choose $(i_2,j_2)$ as follows. 
Since $\ux$ is irreducible, there should be some $i_2<j_1$ and $j_2>j_1$ such that $x_{i_2} = x_{j_2}$. We choose $j_2$ as the maximum of all such $j_2$ and then choose $i_2$ as the minimum of all $l$ such that $x_{l} = x_{j_2}$. From \eqref{eqn:max}, $j < i_2<j_1$.

Having chosen $(i_2,j_2)$, we again have the alternatives:
\begin{enumerate}
\item $j_2>i$: we stop with $(i_1,j_1),(i_2,j_2)$.
\item $j_2<i$: we continue to choose the next pair $(i_3,j_3)$ for some $j_1<i_3 < j_2$ and $j_3> j_2$, following the same procedure. We repeat this procedure until we obtain $(i_1,i_1),\ldots,(i_m,j_m)$ for some $m$ satisfying $j_{m-2}<i_m<j_{m-1}$ and $j_m>i$. We write $m(\ux)$ for this $m$. By a simple counting argument, we see that $m(\ux)\leq (n-3)/2$ for any $\ux \in U'$.
\end{enumerate}
\end{enumerate}

From the Procedure, we may write $U \cap V'_{i,j}$ as a disjoint union. We first define some basic building blocks. For $0<i,j<n$, define
\[\mathcal{S}_{i,j}:=\{ \ux: x_i=x_j \}. \] 
We note that the definition is different from the definition in \eqref{eq:Sjdefn} and \cite{B} -- it does not require a further restriction regarding the dyadic decomposition. 
It is convenient to set $j_0=j$ and $j_{-1}=0$. For $m\geq 1$, define, for a given $(i_m,j_m)$ and fixed $j_{m-2},j_{m-1}$,
\begin{align*}
\mathcal{S}_{i_m,j_m}'(j_{m-2},j_{m-1}) := \mathcal{S}_{i_m,j_m} \setminus
\l(\bigcup_{\substack{ {j_{m-2}<u<j_{m-1}}\\ {j_m<v<n}}} \mathcal{S}_{u,v} \r)\cup\l(
 \bigcup_{j_{m-2} < u < i_m} \mathcal{S}_{u,j_m}\r).
\end{align*}
We shall write $\mathcal{S}_{i_m,j_m}'$ for $\mathcal{S}_{i_m,j_m}'(j_{m-2},j_{m-1})$ for the sake of simplicity. The set $\mathcal{S}_{i_m,j_m}'$ is chosen so that if $(i_m,j_m)$ is selected in the Procedure for $\ux \in U\cap V_{i,j}'$, then $\ux \in \mathcal{S}_{i_m,j_m}'$.

Note that the first step of the Procedure gives
\[ U \cap V'_{i,j} = \bigsqcup_{ \substack{ {0<i_1<j}  \\ {j<j_1<n} }} U \cap V'_{i,j} \cap \mathcal{S}_{i_1,j_1}'. \]
When $j_1<i$, the Procedure decomposes $U\cap V_{i,j}' \cap \mathcal{S}_{i_1,j_1}'$ further; this corresponds to the case 2. We have
$$ U \cap V'_{i,j} \cap \mathcal{S}_{i_1,j_1}' = \bigsqcup_{ \substack{ {j<i_2<j_1}  \\ {j_1<j_2<n} }} U \cap V'_{i,j} \cap \mathcal{S}_{i_1,j_1}' \cap \mathcal{S}_{i_2,j_2}'. $$
Each $U \cap V'_{i,j} \cap \mathcal{S}_{i_1,j_1}' \cap \mathcal{S}_{i_2,j_2}'$ is decomposed further when $j_2<i$; this corresponds to the case 2.(b) in the Procedure.

Repeating this yields the desired decomposition of the set $U'$. To describe this decomposition in a compact way, we set
\begin{align*}
I_m(j,j_{m-2},j_{m-1},i) &:=  \{ (i_m,j_m) : j_{m-2}<i_m<j_{m-1}, i<j_m<n \} \\
I_m'(j,j_{m-2},j_{m-1},i) &:= \{ (i_m,j_m) : j_{m-2}<i_m<j_{m-1}, j_{m-1}<j_m<i \}.
\end{align*}
We shall simply write $I_m$ and $I_m'$ assuming that we work with fixed indices $j,j_{m-2},j_{m-1},i$. Note that if $(i_1,j_1), \ldots, (i_{m(\ux)},j_{m(\ux)})$ are obtained from the Procedure for some $\ux\in U \cap V_{i,j}'$, then 
\begin{align*}
(i_m,j_m) &\in I_m' \text{ for } 1\leq m <m(\ux) \\
(i_{m(\ux)},j_{m(\ux)}) &\in I_{m(\ux)},
\end{align*}
and
\[ \ux \in \mathcal{S}_{i_1,j_1}'\cap \mathcal{S}_{i_2,j_2}' \cap \ldots \cap \mathcal{S}_{i_{m(\ux)},j_{m(\ux)}}'.\]

In conclusion, combined with \eqref{eqn:firstdecompo}, we can write 
\begin{equation}\label{eqn:partition}
\begin{aligned}
U =& \bigsqcup_{0<i<j<n} V_{i,j}'\\
& \sqcup \bigsqcup_{m} \bigsqcup_{1<j<i<n-1}  \bigsqcup_{\substack{ {(i_l,j_l)\in I_l'} \\ {1\leq l< m} }} \bigsqcup_{(i_{m},j_{m})\in I_{m}} V_{i,j}' \cap \mathcal{S}_{i_1,j_1}' \cap \mathcal{S}_{i_2,j_2}' \cap \ldots\cap \mathcal{S}_{i_{m},j_{m}}',
\end{aligned}
\end{equation}
where the union in $m$ is taken over $1\leq m\leq (n-3)/2$.

We write $U=\bigsqcup_{\alpha\in \mathcal{A}} U_\alpha'$ after renaming all disjoint sets involved in \eqref{eqn:partition}. We claim that $\# \mathcal{A} \leq 2^{n-1}$. First note that there are ${n-1} \choose {2}$ sets $V_{i,j}'$, $0<i<j<n$. Moreover, for each $1\leq m\leq (n-3)/2$, the disjoint union $\bigsqcup_{1<j<i<n-1}  \bigsqcup_{\substack{ {(i_l,j_l)\in I_l'} \\ {1\leq l< m} }} \bigsqcup_{(i_{m},j_{m})\in I_{m}},$ involves ${{n-1} \choose {2m+2}}$ sets. Therefore,
\[ \# \mathcal{A} = \sum_{0\leq m \leq (n-3)/2} {{n-1} \choose {2m+2}} \leq 2^{n-1}. \]
For the proof of Proposition \ref{prop:decomposition}, it only remains to prove \eqref{eqn:setbound}.

\subsection{Proof of \eqref{eqn:setbound}}
In this subsection, we prove estimates for each set appearing in the partition \eqref{eqn:partition} using Lemmas \ref{lm:bourgain} and \ref{lm:bourgain2}. Recall that the set $X_k$ is defined by
\[ X_k = \{ \ux\in(\Z^d)^{n-1}: \max_{0\leq j<n }|x_j-x_{j+1}| < 2^k \}. \]
Note that the truncation $S\to S\cap X_k$ amounts to the replacemment $K^j \to K^j_{\mathbf{I}_k}$, where $\mathbf{I}_k=[0,2^k)$.

\begin{lm} \label{lem:vij} Assume that $0<i<j<n$. Then
\[ | T^n_{V_{i,j}'}(x_0,x_n)| \leq \epsilon^3\l(\frac{C}{\eps}\r)^n \br{x_0 - x_n}^{-3d+\epsilon}. \]
\end{lm}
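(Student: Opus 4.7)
The key structural feature is that the conditions $x_i = x_n$ and $x_j = x_0$, together with $0 < i < j < n$, force any $\ux \in V_{i,j}'$ to cover the distance $r := |x_0 - x_n|$ three separate times: from $x_0$ to $x_i = x_n$, then from $x_i = x_n$ back to $x_j = x_0$, and finally from $x_j = x_0$ to $x_n$. The plan is to exploit this by factoring the sum defining $T^n_{V_{i,j}}$ into three independent operator products, each of which contributes a factor of $\br{r}^{-d+O(\epsilon)}$ via Corollary \ref{cor:bourgain}; this produces the desired $-3d+\epsilon$ decay. The extra constraints distinguishing $V_{i,j}'$ from $V_{i,j}$ will then be absorbed via the disjointification trick of Lemma \ref{lm:bourgain2}.

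Since $V_{i,j}$ is infinite, I would first work with the truncation $V_{i,j} \cap X_k$ and pass to the limit through the second part of Lemma \ref{lm:bourgain2}. Fixing $k$, the equalities $x_i = x_n$ and $x_j = x_0$ split the free summation variables into three independent blocks, so that
\[
T^n_{V_{i,j} \cap X_k}(x_0, x_n) = A_1(x_0, x_n) \cdot A_2(x_n, x_0) \cdot A_3(x_0, x_n),
\]
with $A_1 := K^1_{\mathbf{I}_k} \cdots K^i_{\mathbf{I}_k}$, $A_2 := K^{i+1}_{\mathbf{I}_k} \cdots K^j_{\mathbf{I}_k}$, and $A_3 := K^{j+1}_{\mathbf{I}_k} \cdots K^n_{\mathbf{I}_k}$. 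Applying Corollary \ref{cor:bourgain} with $\epsilon/3$ in place of $\epsilon$ bounds each $A_\ell$ by $(\epsilon/3)(C/\epsilon)^{n_\ell}\br{r}^{-d+\epsilon/3}$, and multiplying yields
\[
|T^n_{V_{i,j} \cap X_k}(x_0, x_n)| \leq \epsilon^3 (C/\epsilon)^n \br{r}^{-3d+\epsilon} =: M(x_0, x_n),
\]
uniformly in $k$. This bound is stable under the modulation $K^j(x,y) \to e^j_1(x) K^j(x,y) e^j_2(y)$, because at each junction of the factorization the composite weight $e^\ell_2(x_\ell) e^{\ell+1}_1(x_\ell)$ still has $\ell^\infty$-norm at most $1$, so the invariance built into Corollary \ref{cor:bourgain} can be applied verbatim to each $A_\ell$.

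Finally, I would invoke Lemma \ref{lm:bourgain2} with the disjointness data $E_1 = \{0\}$, $F_1 = \{j+1, \ldots, n-1\}$ and $E_2 = \{n\}$, $F_2 = \{1, \ldots, i-1\}$, which encode precisely the two additional conditions in the definition of $V_{i,j}'$. Since $\sum_\ell (|E_\ell|+|F_\ell|) = n + i - j \leq n$, the overhead $2^n$ produced by the lemma is harmlessly absorbed into $(C/\epsilon)^n$, giving the claimed bound on $T^n_{V_{i,j}'}(x_0, x_n)$. The only minor subtlety to check is that the dyadic truncation $X_k$ distributes across the three blocks of the factorization, but this is automatic: $X_k$ is a conjunction of local conditions $|x_l - x_{l+1}| < 2^k$, which are faithfully encoded by the replacements $K^l \to K^l_{\mathbf{I}_k}$. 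Everything else reduces to a mechanical assembly of Section \ref{sec:bourgain}, so I expect no hard obstacle.
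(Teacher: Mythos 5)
Your proposal is correct and matches the paper's proof essentially verbatim: the same factorization of $T^n_{V_{i,j}\cap X_k}(x_0,x_n)$ into three kernel products evaluated at $(x_0,x_n)$, $(x_n,x_0)$, $(x_0,x_n)$, the same application of Corollary \ref{cor:bourgain} to each factor, and the same invocation of Lemma \ref{lm:bourgain2} with the disjointness data encoding $A_V$. The details you spell out (the $\epsilon \to \epsilon/3$ relabeling and the explicit $E_\ell,F_\ell$ sets with $\sum_\ell(|E_\ell|+|F_\ell|)\leq n$) are just implicit in the paper's terser statement.
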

\begin{proof}
First, note that $V_{i,j}' = V_{i,j}\cap A_V$, where
\begin{equation}\label{eq:av}
\begin{split}
A_V &= \{ \ux: \{ x_0\} \cap \{x_{j+1},x_{j+2}, \ldots, x_{n-1} \} = \emptyset \}\\
&\quad \cap \{ \ux: \{x_n\} \cap \{x_{1},x_2,\ldots,x_{i-1} \}=\emptyset\} 
\end{split}
\end{equation}
Therefore, by Lemma \ref{lm:bourgain2}, it is enough to show that 
\[ | T^n_{ V_{i,j} \cap X_k}(x_0,x_n)| \leq \epsilon^3\l(\frac{C}{\eps}\r)^n \br{x_0 - x_n}^{-3d+\epsilon}\]
for all large $k\geq 1$. This is a consequence of the factorization
$$
\begin{aligned} 
T^n_{V_{i,j}\cap X_k }(x_0,x_n)  =& K^1_{\mathbf{I}_k} K^2_{\mathbf{I}_k} \ldots K^{i}_{\mathbf{I}_k}(x_0,x_n) K^{i+1}_{\mathbf{I}_k}\ldots K^j_{\mathbf{I}_k}(x_n,x_0)\\
& K^{j+1}_{\mathbf{I}_k}\ldots K^{n}_{\mathbf{I}_k}(x_0,x_n) 
\end{aligned}
$$
and Corollary \ref{cor:bourgain}.
\end{proof}

\begin{lm} \label{lem:longbound} Let $1 \leq m  \leq (n-3)/2$ and \[ S=V_{i,j} \cap \mathcal{S}_{i_1,j_1} \cap \mathcal{S}_{i_2,j_2}  \cap \ldots \cap  \mathcal{S}_{i_{m},j_{m}} \] for some $(i_l,j_l) \in I_l'$ for $1\leq l<m$ and $(i_m,i_m)\in I_m$. Then 
\[
|T^n_{S}(x_0,x_n)| \leq \epsilon^{3+2m} \l(\frac{C}{\eps}\r)^n \br{x_0-x_n}^{-3d+\epsilon}.
\]
Moreover, we have the bound 
\begin{equation}\label{eqn:longbound}
 |T^n_{S\cap X_k}(x_0,x_n)| \leq \epsilon^{3+2m} \l(\frac{C}{\eps}\r)^n \br{x_0-x_n}^{-3d+\epsilon}
\end{equation}
uniformly in $k\geq 1$.
\end{lm}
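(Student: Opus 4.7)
The set $S$ imposes the identifications $x_j = x_0$, $x_i = x_n$, and $x_{i_l} = x_{j_l}$ for $l=1,\ldots,m$, while the Procedure gives the interleaving $0 < i_1 < j < i_2 < j_1 < i_3 < j_2 < \cdots < i_m < j_{m-1} < i < j_m < n$. These $2m+2$ interior indices, together with $0$ and $n$, partition the path into $2m+3$ consecutive segments. Writing $y_0 := x_0$, $y_l := x_{i_l}$ for $1 \le l \le m$, and $y_{m+1} := x_n$ for the $m+2$ distinct values assumed at the special indices, the first step is to apply Corollary \ref{cor:bourgain} to each segment. This produces the pointwise bound
\[
|T^n_{S}(x_0,x_n)| \leq \epsilon^{2m+3}(C/\epsilon)^n \prod_{e} \br{y_{u_e} - y_{v_e}}^{-(d-\epsilon)},
\]
where the product ranges over the $2m+3$ edges $e$ associated to the segments. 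For the moreover statement, one simply replaces each $K^j$ by its truncation $K^j_{\mathbf{I}_k}$; Corollary \ref{cor:bourgain} supplies the same bound uniformly in $k$.

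\textbf{Edge structure.} Tracking the segment endpoints through the coincidences shows that the edge multiset has a ladder shape: doubled edges $(y_0,y_1)$ and $(y_m,y_{m+1})$ (each with total exponent $2(d-\epsilon)$), main-chain edges $(y_{l-1},y_l)$ for $l=2,\ldots,m$, skip edges $(y_{l-2},y_l)$ for $l=2,\ldots,m$, and a final skip $(y_{m-1},y_{m+1})$, the latter three families each contributing exponent $d-\epsilon$. These $2m+3$ factors account for all segments.

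\textbf{Iterated peaked summation.} The heart of the argument is the auxiliary convolution estimate: for $\alpha > d$ and $\beta_1,\ldots,\beta_k \in (0,d)$,
\[
\sum_{y\in \Z^d} \br{y_0 - y}^{-\alpha} \prod_{j=1}^k \br{y - z_j}^{-\beta_j} \leq C_\alpha \prod_{j=1}^k \br{y_0 - z_j}^{-\beta_j},
\]
with a constant independent of $\epsilon$. This follows from a dyadic decomposition in $|y-y_0|$: the heavy factor confines mass to small scales on which each light factor is essentially $\br{y_0 - z_j}^{-\beta_j}$, while larger shells are absorbed using $\br{y_0-y}^{-\alpha} \leq C\br{y_0 - z_j}^{-\alpha}$ whenever $|y-y_0| \gtrsim |y_0-z_j|$. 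I would apply this estimate iteratively, summing $y_m, y_{m-1},\ldots,y_2,y_1$ in that order. At the $y_m$-step, the three edges $(y_{m-2},y_m)$, $(y_{m-1},y_m)$, $(y_m,y_{m+1})$ contribute one heavy and two light factors; the estimate produces new light factors on $(y_{m-2},y_{m+1})$ and $(y_{m-1},y_{m+1})$, and the latter merges with the pre-existing skip edge $(y_{m-1},y_{m+1})$ to form a new doubled edge. The remaining graph is precisely the $(m-1)$-ladder with $y_{m+1}$ in the role of $y_m$, closing the induction.

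\textbf{Conclusion and main obstacle.} The base case $m=1$ reduces, after summing $y_1$ via the peaked estimate applied to the two heavy factors, to $C \br{y_0 - y_2}^{-2(d-\epsilon)}$; multiplication by the residual $\br{y_0 - y_2}^{-(d-\epsilon)}$ yields $\br{x_0-x_n}^{-3(d-\epsilon)}$. A harmless initial rescaling $\epsilon \to \epsilon/3$ absorbs the factor $3$ into the desired exponent $-3d+\epsilon$, and the prefactor $\epsilon^{2m+3}(C/\epsilon)^n = \epsilon^{3+2m}(C/\epsilon)^n$ survives intact because no step of the argument incurs an $\epsilon^{-1}$ loss. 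The principal technical obstacle is precisely this sharpness: invoking Lemma \ref{lm:sum} instead would cost $\epsilon^{-m}$, so the peaked estimate must be proved with a constant independent of $\epsilon$ via the dyadic case split above. A secondary bookkeeping issue, handled as in the proof of Lemma \ref{lm:removerandom}, arises in degenerate configurations where adjacent special indices coincide (e.g.\ $j_{l-1}+1 = i_l$); the corresponding segment is trivial and the ladder structure only simplifies, so the inductive bound is preserved.
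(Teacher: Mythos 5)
Your decomposition into $2m+3$ segments, the description of the ladder-shaped edge multiset, and the inductive strategy of summing out $y_m, y_{m-1}, \dots, y_1$ in that order all agree with the paper's proof (which treats $m=1$ and $m=2$ explicitly and asserts that the passage from $m-1$ to $m$ is similar). The one place you depart is the one place where there is a genuine problem.

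The ``auxiliary convolution estimate'' you invoke is false as stated. Take $d=3$, $\alpha=3.1$, $\beta_1=\beta_2=2.9$, $y_0=0$, $z_1=z_2=Re_1$: the left-hand side already receives a contribution $\gtrsim R^{-\alpha}=R^{-3.1}$ from the single term $y=Re_1$, whereas the claimed right-hand side is $\approx R^{-\beta_1-\beta_2}=R^{-5.8}$. The estimate breaks down whenever $\beta_1+\beta_2>d$ and $\alpha<\beta_1+\beta_2$, because near a $z_j$ the ``light'' factors are collectively heavier than the pivot $\br{y_0-y}^{-\alpha}$ — your dyadic decomposition in $|y-y_0|$ does not see this, since the offending mass concentrates near the $z_j$'s, not near $y_0$. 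In the ladder application the exponents happen to be borderline, $\alpha=2(d-\epsilon)$ and $\beta_1=\beta_2=d-\epsilon$, so $\alpha=\beta_1+\beta_2$ and the bound just barely holds; but your hypothesis ``$\alpha>d$ and $\beta_j\in(0,d)$'' does not record this, and as written the lemma is wrong. The fix is either to restrict to $\alpha\geq\beta_1+\beta_2$ and reprove it carefully, or — and this is what the paper actually does — to sidestep the three-factor estimate entirely with Cauchy--Schwarz: write $\br{y_m-y_{m+1}}^{-2(d-\epsilon)}=\br{y_m-y_{m+1}}^{-(d-\epsilon)}\cdot\br{y_m-y_{m+1}}^{-(d-\epsilon)}$, pair one copy with each light edge, apply Cauchy--Schwarz in $y_m$, and then use only the robust two-factor bound $\sum_y\br{a-y}^{-2\mu}\br{b-y}^{-2\mu}\leq C\br{a-b}^{-2\mu}$ valid for $\mu>d/2$, with constant uniform in $\epsilon$. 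This produces exactly the same recursion (new skip to $y_{m+1}$, doubled edge at $(y_{m-1},y_{m+1})$), so the rest of your argument is unaffected.

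A smaller remark: your ``secondary bookkeeping issue'' about coincident special indices does not actually occur here. The sets $I_l$, $I_l'$ are defined with strict inequalities, so the $2m+4$ special times are automatically distinct; the coincidence casework belongs to the proof of Lemma \ref{lm:removerandom}, not this lemma.
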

\begin{proof}
We only prove the bound for $T^n_S(x_0,x_n)$. The argument for the truncated version \eqref{eqn:longbound} is the same. 

The proof uses an induction on $m$. We start with the base case $m=1$. Note that $0<i_1<j<i<j_1<n$. We may factor $T(S)$ as
\[ T^n_S(x_0,x_n) = \sum_{x_{i_1}} T_1(x_0,x_{i_1})
T_2(x_{i_1},x_{0})
T_3(x_{0},x_{n})
T_4(x_{n},x_{i_1})
T_5(x_{i_1},x_{n}), \]
where $T_1 =  K^1K^2\ldots K^{i_1}$, $T_2=K^{i_1+1} \ldots K^{j}$, $T_3=K^{j+1} \ldots K^{i}$, $T_4 = K^{i+1} \ldots K^{j_1}$, and $T_5=K^{j_1+1} \ldots K^{n}$.

From Lemma \ref{lm:bourgain}, we have
\begin{align}\label{eqn:m1}
|T^n_S(x_0,x_n) | \leq \epsilon^5\l(\frac{C}{\eps}\r)^n \br{x_0-x_n}^{-d+\epsilon} \sum_{x_{i_1}} \br{x_0-x_{i_1}}^{-2(d-\epsilon)}\br{x_{i_1}-x_{n}}^{-2(d-\epsilon)}.
\end{align}
Here and in the following, all sums are over $\Z^d$. The claimed estimate then follows from 
$$
|x_0-x_n|\leq |x_0-x_{i_1}|+|x_{i_1}-x_n|,
$$
which allows to decompose of the summation into two parts: \[\Z^d = \{ x_{i_1} : |x_0-x_{i_1}| \geq |x_0-x_n|/2 \} \cup \{ x_{i_1} : |x_{i_1}-x_n| \geq |x_0-x_n|/2 \}.\]

Next, we shall derive the claimed estimate for $m=2$ from the estimate for $m=1$. Here, $0<i_1<j<i_2<j_1<i<j_2<n$. Following the above argument, we have  
\begin{align*}
|T^n_S(x_0,x_n)|\leq \epsilon^{7} \l(\frac{C}{\eps}\r)^n \sum_{x_{i_1},x_{i_2}} &\br{x_0-x_{i_1}}^{-2(d-\epsilon)} \br{x_0-x_{i_2}}^{-d+\epsilon}\br{x_{i_2}-x_{i_1}}^{-d+\epsilon}\\
&\times \br{x_{i_1}-x_{n}}^{-d+\epsilon} \br{x_{i_2}-x_{n}}^{-2(d-\epsilon)}.
\end{align*}
We first take the sum over $x_{i_2}$ using the Cauchy-Schwarz inequality:
$$
\begin{aligned}
 &\sum_{x_{i_2}}\br{x_0-x_{i_2}}^{-d+\epsilon} \br{x_{i_2}-x_{n}}^{-d+\epsilon}
\br{x_{i_1}-x_{i_2}}^{-d+\epsilon} \br{x_{i_2}-x_{n}}^{-d+\epsilon}\\
 \leq& C \br{x_0-x_{n}}^{-d+\epsilon}   \br{x_{i_1}-x_{n}}^{-d+\epsilon}.
\end{aligned}
$$ 
Then we get the expression \eqref{eqn:m1} up to a multiplicative factor $C\epsilon^2$.

Passing from $m-1$ to $m$ is similar. We omit the details.
\end{proof}

Finally, we pass from \eqref{eqn:longbound} to an estimate for the ``primed" sets. This is the part where we lose a constant factor bounded by $C^{n\log n}$. 
\begin{lm}\label{lem:pass} Let $S'=V_{i,j}' \cap \mathcal{S}_{i_1,j_1}' \cap \mathcal{S}_{i_2,j_2}' \cap \ldots \cap  \mathcal{S}_{i_{m},j_{m}}'$ for some $(i_l,j_l) \in I_l'$ for $1\leq l<m$ and $(i_m,i_m)\in I_m$. Then 
\[|T^n_{S'}(x_0,x_n)| \leq C^{n\log n} \epsilon^{3+2m-n}  |x_0-x_n|^{-3d+\epsilon} \]
for some constant $C>0$.
\end{lm}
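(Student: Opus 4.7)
The plan is to apply Bourgain's disjointification lemma (Lemma~\ref{lm:bourgain2}) to transfer the bound on the ``unprimed'' set $S := V_{i,j} \cap \mathcal{S}_{i_1,j_1} \cap \cdots \cap \mathcal{S}_{i_m,j_m}$, supplied by Lemma~\ref{lem:longbound}, to the primed set $S'$.

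First, I would express the primed restrictions as pairwise disjointness conditions of the format required by Lemma~\ref{lm:bourgain2}. Specifically, $V_{i,j}' = V_{i,j} \cap A_V$ with $A_V$ given by \eqref{eq:av}, and $\mathcal{S}_{i_l,j_l}' = \mathcal{S}_{i_l,j_l} \cap A_l$ where $A_l$ encodes the two conditions $\{x_u : u \in (j_{l-2}, j_{l-1})\} \cap \{x_v : v \in (j_l, n)\} = \emptyset$ and $\{x_u : u \in (j_{l-2}, i_l)\} \cap \{x_{j_l}\} = \emptyset$. Thus $S' = S \cap A_V \cap A_1 \cap \cdots \cap A_m$, and all the added restrictions assemble into a collection $\{(E_l, F_l)\}$ of index pairs in the format of Lemma~\ref{lm:bourgain2}.

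Second, I would invoke Lemma~\ref{lm:bourgain2} with the $k$-uniform base bound
\[ |T^n_{S \cap X_k}(x_0,x_n)| \leq \eps^{3+2m} (C/\eps)^n \br{x_0 - x_n}^{-3d+\eps} \]
from Lemma~\ref{lem:longbound}, whose stability under the replacement \eqref{eqn:change} follows from Corollary~\ref{cor:bourgain}. This immediately yields
\[ |T^n_{S'}(x_0,x_n)| \leq 2^{\sum_l (|E_l| + |F_l|)} \cdot \eps^{3+2m} (C/\eps)^n \br{x_0 - x_n}^{-3d+\eps}. \]

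Third, I would bound $\sum_l (|E_l| + |F_l|)$ by $O(n\log n)$. A naive accounting assigns each $A_l$ two pairs whose individual sizes sum to at most $n$, producing the unacceptable total $O(nm) = O(n^2)$ and a bound of the form $C^{n^2}$. To recover the claimed $C^{n\log n}$, the plan is to exploit the nested structure of the right-hand intervals $(j_l, n) \supset (j_{l+1}, n) \supset \cdots$ together with the disjointness of the left-hand intervals $(j_{l-2}, j_{l-1})$. Concretely, I would subdivide $\{0, 1, \ldots, n\}$ into $O(\log n)$ dyadic scales and, at each scale, bundle those individual distinctness constraints whose right-hand endpoints lie in a common dyadic block into a single enlarged ``super-constraint'' of total size $O(n)$; this produces $O(\log n)$ super-constraints with aggregate size $O(n\log n)$.

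The hard part is precisely this combinatorial Step~3: it is the bottleneck flagged in Section~\ref{sec:question}, and improving the loss to a pure $C^n$ would give an alternative route to Theorem~\ref{thm:main} via the argument of Section~\ref{sec:key}. Once the $O(n\log n)$ bound on the total size is in hand, Step~2 combined with $2^{n\log n} \leq C^{n\log n}$ yields the lemma after absorbing the harmless factor $(C/\eps)^n / \eps^{-n} = C^n$ into the constant $C^{n\log n}$.
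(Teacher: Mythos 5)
Your Steps 1 and 2 reproduce the paper's proof of the weaker bound \eqref{eqn:pass0} essentially verbatim: you rewrite the primed restrictions as a family of pairwise-disjointness conditions $A_V, A_S^1, A_S^2$ in the format of Lemma~\ref{lm:bourgain2}, and you feed in the $k$-uniform base bound~\eqref{eqn:longbound} from Lemma~\ref{lem:longbound}, whose stability under~\eqref{eqn:change} comes from Corollary~\ref{cor:bourgain}. The decisive work is your Step~3, which the paper isolates as a separate combinatorial lemma (Lemma~\ref{lem:effi}): rewriting $A_S^2$ as $\bigcap_l\{\ux:\{x_u:u\in E_l'\}\cap\{x_v:v\in F_l'\}=\emptyset\}$ with $\sum_l(|E_l'|+|F_l'|)=O(n\log n)$. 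The paper proves this by a divide-and-conquer recursion: pick $l_0$ with $j_{l_0}\approx n/2$, peel off the tail $F_{l_0}$ from each $F_l$ with $l<l_0$ (the peeled-off pieces bundle into a single $O(n)$ constraint because the $E_l$ are disjoint), and recurse on the left half $[1,j_{l_0}]$ and the right half $(j_{l_0+1},n)$, yielding $C(n)\leq 3n+2C(n/2)$. Your flat dyadic bundling -- decompose each interval $F_l$ into $O(\log n)$ canonical dyadic blocks, bundle constraints sharing a block into one using disjointness of the $E_l$, and total the sizes scale by scale -- amounts to the same thing and also yields $O(n\log n)$ once tallied carefully: $\sum_l|E_l|\cdot O(\log n)\leq O(n\log n)$ for the $E$-contribution and $\sum_B|B|\leq O(n\log n)$ for the $F$-contribution. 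Two small cautions. First, the phrasing ``bundle constraints whose right-hand endpoints lie in a common dyadic block into a single super-constraint per scale'' is imprecise: there is one bundled constraint per dyadic block (not per scale), and it is the per-scale total of their sizes that is $O(n)$, not each individual one. Second, the bundling must split the $F_l$ (not union them wholesale), since $\{x_u:u\in E\cup E'\}\cap\{x_v:v\in F\cup F'\}=\emptyset$ is strictly stronger than the conjunction of the two separate constraints unless $F=F'$; your scheme implicitly handles this correctly by working block by block, but it is worth stating. Modulo these presentation points, your proposal is correct and recovers the paper's argument, with the flat dyadic count as a natural alternative to the paper's explicit recursion.
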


To prepare for the proof of Lemma \ref{lem:pass}, we first prove the following weaker estimate.
\begin{equation}\label{eqn:pass0}
|T^n_{S'}(x_0,x_n)| \leq C^{n^2} \epsilon^{3+2m-n}  |x_0-x_n|^{-3d+\epsilon}.
\end{equation}
\begin{proof}[Proof of \eqref{eqn:pass0}]

We first write $\mathcal{S}_{i_m,j_m}'$ as
\begin{align*}
 \mathcal{S}_{i_m,j_m}' = \mathcal{S}_{i_m,j_m} \cap
&\bigcap \{ \ux: \{x_u: j_{m-2}<u<i_{m} \} \cap \{x_{j_m} \} = \emptyset \}  \\
 &\bigcap \{ \ux: \{x_u: j_{m-2}<u<j_{m-1} \} \cap \{x_v : j_m<v<n \} = \emptyset \}.
\end{align*}

Let $S=V_{i_1,j_1}\cap \mathcal{S}_{i_1,j_1} \cap \mathcal{S}_{i_2,j_2}\ldots \cap \mathcal{S}_{i_{m},j_{m}}$. Then we may write 
\[ S' =S \cap A_V \cap A_S^1 \cap A_S^2, \]
where $A_V$ is as in \eqref{eq:av} and
\begin{align*}
A_S^1&= \bigcap_{1\leq l \leq m} 
\{ \ux: \{x_u: j_{l-2}<u<i_{l} \} \cap \{x_{j_l} \} = \emptyset \}\\
A_S^2&= \bigcap_{1\leq l\leq m} \{ \ux: \{x_u: j_{l-2}<u<j_{l-1} \} \cap \{x_v : j_{l}<v<n \} = \emptyset \}. 
\end{align*}
We apply Lemma \ref{lem:longbound} and Lemma \ref{lm:bourgain2}. We count the number of terms $x_j$ needed to define intersections in $A_V$, $A_S^1$, and $A_S^2$. First, $A_V$ involves at most $n-j + i\leq 2n$ terms. In addition, $A_S^1$ involves at most $\sum_{l=1}^m( i_l-j_{l-2})\leq \sum_{l=1}^m (j_l-j_{l-2}) \leq 2n$ terms. Finally, $A_S^2$ involves at most $\sum_{l=1}^m (n-j_l + j_{l-1} - j_{l-2})\leq nm \leq n^2/2.$ 
In total, we lose a factor bounded by $2^{4n+n^2/2} \leq 2^{5n^2}$ in the application of Lemma \ref{lm:bourgain2}. This finishes the proof.
\end{proof}

Next, we indicate how to modify the proof of \eqref{eqn:pass0} to obtain Lemma \ref{lem:pass}. First, recall that the intersection with $A_S^2$ is the only part that we lose a factor larger than $C^n$. We lost a factor of $C^{n^2}$ from the bound
\[ \sum_{l=1}^{m}  |E_l|+ |F_l| \leq n^2/2, \]
where 
\begin{align}\label{eqn:defef}
E_l := (j_{l-2},j_{l-1})\cap \Z  \;\; \text{and} \;\; 
F_{l} := (j_{l}, n) \cap \Z.
\end{align}
We show that we can rewrite $A_S^2$ in a more efficient way, which implies Lemma \ref{lem:pass}.

\begin{lm}\label{lem:effi}
Let $0=j_{-1}<j_0< j_1<\ldots<j_m<n$ be an increasing sequence of integers such that the sets $E_l, F_l$ defined in \eqref{eqn:defef} are non-empty. Then there exist subsets $\{ E_l', F_l' \}_{1\leq l\leq m}$ of $(0,n)\cap \Z$ such that 
\begin{equation}\label{eqn:effi}
\begin{split}
A := & \bigcap_{l=1}^{m} \{ \ux : \{ x_u : u\in E_l \} \cap \{ x_v: v\in F_l \} =\emptyset \}\\ = &\bigcap_{l=1}^{m} \{ \ux : \{ x_u : u\in E_l' \} \cap \{ x_v: v\in F_l' \} = \emptyset \},
\end{split}
\end{equation}
and
\[ \sum_{l=1}^{m}  |E_l'|+ |F_l'| = O(n\log n).\]
\end{lm}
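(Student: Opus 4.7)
The key observation is that the $O(n^2)$ bound on $\sum_l (|E_l|+|F_l|)$ comes entirely from the right-hand side: the $E_l = (j_{l-2},j_{l-1})\cap \Z$ are pairwise disjoint intervals with $\sum_l |E_l|\leq n$, whereas the $F_l = (j_l,n)\cap \Z$ form a nested chain of suffixes $F_1\supset F_2 \supset \cdots \supset F_m$, each of which can be as long as $n$. The plan is to replace each suffix $F_l$ by its canonical decomposition into $O(\log n)$ dyadic intervals and then regroup across $l$, using the elementary fact that two constraints sharing the same right-hand block $J$ collapse to a single constraint whose left-hand side is the union of the two old left-hand sides.

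Concretely, I let $N$ be the smallest power of $2$ with $N\geq n$ and introduce the family $\mathcal{D}$ of dyadic intervals $((a-1)2^h, a 2^h] \cap \Z$, $h\geq 0$, $1\leq a\leq N/2^h$. By the standard segment-tree decomposition, each $F_l$ can be written as a disjoint union of at most $2\log_2 N$ intervals in $\mathcal{D}$; denote this collection by $\mathcal{I}_l$. Since the constraint at level $l$ factors over $\mathcal{I}_l$, namely
$$
\{\ux:\{x_u:u\in E_l\}\cap \{x_v:v\in F_l\}=\emptyset\}=\bigcap_{I\in \mathcal{I}_l}\{\ux:\{x_u:u\in E_l\}\cap \{x_v:v\in I\}=\emptyset\},
$$
I swap the order of intersection, grouping by $I\in \mathcal{D}$, and define
$$
E'_I := \bigcup_{l:\,I\in \mathcal{I}_l} E_l, \qquad F'_I := I\cap (0,n)\cap \Z.
$$
Then $A=\bigcap_{I\in \mathcal{D}:\,E'_I\neq \emptyset}\{\ux:\{x_u:u\in E'_I\}\cap \{x_v:v\in F'_I\}=\emptyset\}$, which is exactly the form required in \eqref{eqn:effi} after relabeling.

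For the size bound I would argue as follows. At each of the $O(\log n)$ dyadic levels the intervals of $\mathcal{D}$ partition $\{1,\ldots,N\}$, which gives $\sum_I |F'_I|=O(n\log n)$. On the $E$-side, disjointness of the $E_l$ and $|\mathcal{I}_l|=O(\log n)$ yield
$$
\sum_I |E'_I|\leq \sum_l |E_l|\cdot |\mathcal{I}_l|\leq C\log n \sum_l |E_l|\leq C n\log n.
$$
Summing gives the desired $O(n\log n)$ total, which plugged into Lemma~\ref{lm:bourgain2} in place of the crude bound used to prove \eqref{eqn:pass0} replaces the factor $C^{n^2}$ by $C^{n\log n}$ in Lemma~\ref{lem:pass}.

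I do not anticipate a serious obstacle; the argument is essentially segment-tree bookkeeping. The only mild subtlety is to verify that when several indices $l$ contribute the same canonical interval $I$, taking the set-theoretic union in the definition of $E'_I$ correctly captures the conjunction of the corresponding constraints, which is immediate from the ``common right-hand block'' identity used above.
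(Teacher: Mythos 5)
Your proof is correct, and it takes a genuinely different route from the paper. The paper's proof sets up a divide-and-conquer recursion on the index range: it chooses $l_0$ to be the largest $l$ with $j_l \leq n/2 - 1$, rewrites $A=\bigcap_l A_l$ as three ``boundary'' constraints $A_{l_0}'\cap A_{l_0+1}\cap A_{l_0+2}$ (where $A_{l_0}'$ absorbs $\bigcup_{l\leq l_0}E_l$ on the left side) together with a left block $\bigcap_{l<l_0}A_l'$ using the truncated sets $F_l\setminus F_{l_0}$, confined to $[1,j_{l_0}]$, and a right block $\bigcap_{l\geq l_0+3}A_l$ confined to $(j_{l_0+1},n)$, and then applies the same reduction recursively on the two halves. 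This yields the recursion $C(n)\leq 3n+2C(n/2)$, hence $C(n)=O(n\log n)$, and it has the bookkeeping virtue of preserving exactly $m$ pairs. Your argument instead uses the fact that the $F_l$ form a decreasing chain of suffixes and directly decomposes each $F_l$ into $O(\log n)$ canonical dyadic intervals $\mathcal{I}_l$, then regroups by dyadic interval via $E'_I=\bigcup_{l:I\in\mathcal{I}_l}E_l$; the common-right-hand-block identity gives the set equality, and the pairwise disjointness of the $E_l$ together with $|\mathcal{I}_l|=O(\log n)$ gives the $O(n\log n)$ total, with no induction needed. Both approaches ultimately exploit the same binary bisection structure, but yours is a one-shot, bottom-up segment-tree construction while the paper's is top-down. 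One small formal mismatch worth noting: your family is indexed by the dyadic intervals $I$ with $E'_I\neq\emptyset$, so its cardinality can exceed $m$, whereas the lemma as stated asks for exactly $m$ pairs. This is harmless for the intended application, since Lemma \ref{lm:bourgain2} only depends on the total $\sum(|E_l'|+|F_l'|)$ and not on the number of pairs, but it means the statement of Lemma \ref{lem:effi} should be read with the index set allowed to be arbitrary.
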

\begin{proof}
We first give an informal discussion. The idea is to choose 
\begin{equation}\label{eqn:middle}
l_0 := \max\{ l : j_l \leq \frac{n}{2}-1 \}
\end{equation}
and then write $A$ as an intersection of three parts
\begin{equation}\label{eqn:three}
A= (A'_{l_0}\cap A_{l_0+1} \cap A_{l_0+2}) \cap \bigcap_{l=1}^{l_0-1} A_l' \cap\bigcap_{l=l_0+3}^{m}  A_l,
\end{equation}
where
\begin{align*}
A_l &:= \{ \ux : \{ x_u : u\in E_l \} \cap \{ x_v: v\in F_l \} = \emptyset \},\\
A_l ' &:= \{ \ux : \{ x_u : u\in E_l \} \cap \{ x_v: v\in F_l\setminus F_{l_0} \} = \emptyset \}, \; \text{ for } l<l_0 \\
 A_{l_0} ' &:= 
\{ \ux : \{ x_u : u\in \cup_{l=1}^{l_0} E_l \} \cap \{ x_v: v\in F_{l_0} \} = \emptyset \}.
\end{align*}
The saving comes from that now $A_l'$ involves $x_v$ for $v\in F_l\setminus F_{l_0}$ when $l<l_0$. We iterate this manipulation to $\bigcap_{l=1}^{l_0-1} A_l'$ and $\bigcap_{l=l_0+3}^{m}  A_l$. 

We turn to a rigorous argument. For a given $0
<j_0< j_1<\ldots<j_m<n$, define
\begin{align*}
\tilde{C}(n;j_0,j_1,\ldots,j_m) &:= \min \sum_{l=1}^{m} (|E_l'| + | F_l'|), 
\end{align*}
where the minimum is taken over all collection of subsets $\{ E_l', F_l' \}_{1\leq l\leq m}$ of $(0,n)\cap \Z$, satisfying \eqref{eqn:effi}. Here the parameter $n$ is associated with the largest element $n-1$ in $F_l$. 
In addition, define 
\[ C(n) := \max \tilde{C}(n;j_0,j_1,\ldots,j_m),\]
where the maximum is taken over all $(j_0,j_1,\ldots,j_m)$ satisfying the assumption of Lemma \ref{lem:effi}. Certainly, $C(n)$ is non-decreasing and $C(n)=O(1)$ when $n=O(1)$.

We claim that $C(n) =O(n\log n)$. Without loss of generality, we may assume that $n$ is a power of $2$. We will show that 
\begin{equation}\label{eqn:induct}
C(n) \leq  3n + 2C(n/2).
\end{equation}
Iterating \eqref{eqn:induct} $k$ times, we get $C(n) \leq 3kn + 2^kC(n/2^k)$, from which we obtain the claim by choosing $k\sim \log n$.

Let $0<j_0<j_1<\ldots<j_m<n$ be given. We need to show that
\begin{equation}\label{eqn:induct2}
\tilde{C}(n;j_0,j_1,\ldots,j_m) \leq 3n + 2C(n/2).
\end{equation}
Let $l_0$ be as in \eqref{eqn:middle} and write $A$ as in \eqref{eqn:three}. Observe that the sets $E_l$ and $F_l\setminus F_{l_0}$ for $l<l_0$ are contained in the set $[1,j_{l_0}]\cap \Z$. 
Therefore, there are subsets $\{E_l',F_l'\}_{1\leq l\leq l_0-1}$ of $[1,j_{l_0}] \cap \Z$ such that 
\[  \bigcap_{l=1}^{l_0-1} A_l' = \bigcap_{l=1}^{l_0-1} \{ \ux : \{ x_u : u\in E_l' \} \cap \{ x_v: v\in F_l' \} = \emptyset \} \]
and 
\[ \sum_{l=1}^{l_0-1}  |E_l'|+ |F_l'| = \tilde{C}(j_{l_0}+1;  j_0,j_1,\ldots,j_{l_0-1} ) \leq C(j_{l_0}+1)\leq C(n/2),\]
since $j_{l_0}+1 \leq n/2$ by the choice of $l_0$.

The situation for $l\geq l_0+3$ is essentially the same as the case for $l<l_0$ since the sets $E_l$ and $F_l$, for $l\geq l_0+3$, are contained in the interval $(j_{l_0+1},n) \cap \Z$ of length less than or equal to $n/2$. Notice that we have translation invariance, i.e., we may work with translated sets of $E_l-j_{l_0+1}$ and $F_l-j_{l_0+1}$ for the purpose of choosing the sets $E_l'$ and $F_l'$. Thanks to this, we may find sets $\{E_l',F_l'\}_{l_0+3 \leq l\leq m}$ such that 
\[  \bigcap_{l=l_0+3}^{m} A_l = \bigcap_{l=l_0+3}^{m} \{ \ux : \{ x_u : u\in E_l' \} \cap \{ x_v: v\in F_l' \} = \emptyset \} \]
and 
\[ \sum_{l=l_0+3}^{m} |E_l'|+ |F_l'| \leq C(n-j_{l_0+1})\leq C(n/2).\]

For the remaining part, $A_{l_0}' \cap A_{l_0+1} \cap A_{l_0+2}$, we just set $E_{l}' = E_l$ and $F_{l}'=F_l$ for $l_0 \leq l\leq l_0+2$ except that $E_{l_0}' := \cup_{l=1}^{l_0}E_l$. 

So far, we have found $\{ E_l',F_l' \}_{1\leq l\leq m}$ satisfying \eqref{eqn:effi} such that
\begin{align*}
 \sum_{1\leq l \leq m} |E_l'| + |F_l'| &\leq 3n + 2C(n/2), 
\end{align*}
which verifies \eqref{eqn:induct2}. This completes the proof.
\end{proof}

\begin{appendix}
\section{Proof of Corollary \ref{cor:GF} on derivatives of the averaged Green's function}
The proof is based on the standard fact that existence of derivatives in Fourier space (which we get from Theorem \ref{thm:main}) can be translated to decay in physical space via integration by parts. For the endpoint case $|\al|=d+1$, we use a variant of the argument which only requires the Fourier transform to be H\"older continuous.

Let $d\geq 2$ and assume that $\alpha$ is a multi-index such that $|\alpha|>2-d$. This condition ensures that the symbol of $\nabla^{\alpha} G$ (and $\nabla^{\alpha} G_\mu$) is integrable on $\T^d$. 
We shall prove the first statement for $d\geq 3$ as the proof of the second statement is identical. 

Fix $0<\epsilon<1$ and let $0<\delta<c\epsilon$, where $c$ is the constant $\tilde{c}_d$ from Theorem \ref{thm:main}. Note that the operator $\mathcal{L}$ is a convolution operator whose symbol is given by \[m(\theta) = (1+\delta \E \sigma) \sum_{j=1}^d 2(1-\cos \theta_j) + \sum_{1\leq j,k\leq d} (e^{-i\theta_j}-1) \widehat{K_{j,k}^\delta}(\theta) (e^{i\theta_k}-1)\]
for $\theta \in \T^d$. By Theorem \ref{thm:main}, we have
\[ \norm{\widehat{K_{j,k}} }_{C^{2d-1,1-\epsilon} (\T^d) }\leq C\delta^2. \]
In particular, we may find $0<c_d \leq c\epsilon$ such that for any $0<\delta<c_d$, we have the lower bound 
\begin{equation}\label{eqn:lower}
|m(\theta)| \geq C |\theta|^2
\end{equation}
for some constant $C>0$ for any $\theta$ in $\T^d$ which we identify with $[-\pi,\pi]^d$. 

Next, let $m^\alpha$ be the symbol of $\nabla^\alpha$, i.e. $ m^\alpha(\theta) = \prod_{j=1}^d (e^{i\theta_j}-1)^{\alpha_j }.$ Since $|m^\alpha(\theta)|\leq \prod_{j=1}^d |\theta_j|^{\alpha_j } \leq |\theta|^{|\alpha|}$, we see that 
\begin{equation}\label{eqn:up}
\abs{\frac{m^\alpha(\theta)}{m(\theta)}} \leq C |\theta|^{|\alpha|-2},
\end{equation}
which is integrable on $\T^d$ provided that $|\alpha|>2-d$.

The kernel $\nabla^\alpha G(x)$ is the Fourier inverse of $m^\alpha(\theta)[m(\theta)]^{-1}$. We estimate $\nabla^\alpha G(x)$ using a dyadic decomposition of $\T^d$ as follows. Let $\vp$ be a smooth even function compactly supported on $[-2,2]$ and $\vp(r) = 1$ for $r\in[-1,1]$. Let $\psi(r) := \vp(r) - \vp(2r)$ and $\psi_l(r) := \psi(2^{l} r)$ for $l\geq 1$ and $\psi_0(r) := 1-\vp(2r)$. Note that $\sum_{l\geq 0} \psi_l(r) = 1$ for any $r\neq 0$. We write $\nabla^\alpha G(x) = \sum_{l\geq 0} \nabla^\alpha G_l(x)$, where we denote by $\nabla^\alpha G_l(x)$ the Fourier inverse of $\psi_l(|\theta|) m^\alpha(\theta)[m(\theta)]^{-1}$. 

Define 
\[ g^\alpha_l(\theta) := \frac{\phi(\theta)m^\alpha(2^{-l}\theta)}{m(2^{-l}\theta)}, \]
where $\phi(\theta) := \psi(|\theta|)$. Then for $l\geq 1$, we may write by a change of variable
\[ \nabla^{\alpha} G_l(x) = 2^{-ld} \int_{\R^d}  g^\alpha_l(\theta) e^{i2^{-l}x \cdot \theta} \frac{d\theta}{(2\pi)^d}. \]

First note that, by \eqref{eqn:up}, $|\nabla^{\alpha} G_l(x)| \leq C 2^{-l(d-2+|\alpha|)}$ for any $x\in \Z^d$. This bound may be improved when $2^{-l}|x|\geq 1$. We claim that when $|x|\geq 2^l$ and $l\geq 1$, we have
\begin{equation}\label{eqn:lbound}
|\nabla^{\alpha} G_l(x)| \leq  \frac{C2^{-l(d-2+|\alpha|)}}{(2^{-l}|x|)^{2d-\epsilon}}.
\end{equation}

Given the estimate \eqref{eqn:lbound}, Corollary \ref{cor:GF} follows quickly. First of all, one can check, using integration by parts, that
\[ |\nabla^{\alpha} G_0(x)| \leq  C(1+|x|)^{-(2d-1)}.\]
Using this bound and \eqref{eqn:lbound}, we get
\begin{equation}\label{eq:GF1}
 |\nabla^{\alpha} G(x)| \leq C \sum_{l\geq0} 2^{-l(d-2+|\alpha|)} \leq C
\end{equation}
for any $x\in \Z^d$, since we assume $|\alpha|>2-d$. Next we assume that $|x|\geq 100$ and study the sum over $2^l > |x|$ and $2^l \leq |x|$ separately. We have 
\begin{equation}\label{eq:GF2}
 \sum_{l\geq 0 :\; 2^l > |x|}  |\nabla^{\alpha} G_l(x)| \leq C\sum_{l\geq 0 :\; 2^l > |x|} 2^{-l(d-2+|\alpha|)} \leq C |x|^{-(d-2+|\alpha|)}. 
\end{equation}
On the other hand, if $|\al|\leq d+1$, we have
\begin{equation}\label{eq:GF3}
\begin{split}
 \sum_{l\geq 0 :\; 2^l \leq  |x|}  |\nabla^{\alpha} G_l(x)| &\leq C|x|^{-(2d-1)} + C\sum_{l\geq 1 :\; 2^l \leq |x|} 2^{l(d + 2 - |\alpha| -\epsilon )} |x|^{-2d+\epsilon}\\ &\leq C |x|^{-(d-2+|\alpha|)}.
 \end{split}
\end{equation}
Observe that \eqref{eq:GF1}, \eqref{eq:GF2} and \eqref{eq:GF3} implies Corollary \ref{cor:GF}.

It remains to verify \eqref{eqn:lbound}. We need the following lemma.
\begin{lm} \label{lm:Holder} For $0<\delta< c_d$ and $l\geq 1$, we have
\[ \norm{g_l^\alpha}_{C^{2d-1,1-\epsilon} (\R^d) }\leq C 2^{-l(|\alpha|-2)}.  \]
\end{lm}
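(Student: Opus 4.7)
The plan is a scaling argument. Define
\[
\tilde m_l(\eta) := 2^{2l}m(2^{-l}\eta), \qquad \tilde m_l^\alpha(\eta) := 2^{l|\alpha|}m^\alpha(2^{-l}\eta),
\]
so that $g_l^\alpha(\theta) = 2^{-l(|\alpha|-2)}\phi(\theta)\tilde m_l^\alpha(\theta)/\tilde m_l(\theta)$. Since $\phi$ is supported in the fixed annulus $\{1/2\leq |\theta|\leq 2\}$, the claim reduces to showing that $\phi\tilde m_l^\alpha/\tilde m_l$ is bounded in $C^{2d-1,1-\epsilon}(\R^d)$ uniformly in $l\geq 1$.

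The first step is to show that $\tilde m_l$ and $\tilde m_l^\alpha$ are uniformly bounded in $C^{2d-1,1-\epsilon}$ on the ball $B_4 \subset \R^d$. The elementary one-variable functions $u_l^\pm(s) := 2^l(e^{\pm is 2^{-l}}-1)$ satisfy $(u_l^\pm)^{(k)}(s) = (\pm i)^k 2^{-l(k-1)} e^{\pm is 2^{-l}}$ for $k\geq 1$, hence they lie in $C^\infty$ with all derivatives uniformly bounded in $l$ on bounded sets. Since
\[
\tilde m_l^\alpha(\eta) = \prod_j \l[u_l^+(\eta_j)\r]^{\alpha_j},
\]
we get $\tilde m_l^\alpha \in C^\infty(B_4)$ uniformly. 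For $\tilde m_l$, decompose $m = \sigma_\Delta + \sigma_K$ according to \eqref{eqn:L}. The smooth part gives $2^{2l}\sigma_\Delta(2^{-l}\eta) = (1+\delta\E\sigma)\sum_j u_l^+(\eta_j)u_l^-(\eta_j)$, again uniformly in $C^\infty$. For the rough part,
\[
2^{2l}\sigma_K(2^{-l}\eta) = \sum_{j,k} u_l^-(\eta_j)\,\widehat{K_{j,k}^\delta}(2^{-l}\eta)\,u_l^+(\eta_k).
\]
Theorem \ref{thm:main} provides $\|\widehat{K_{j,k}^\delta}\|_{C^{2d-1,1-\epsilon}(\T^d)} \leq C\delta^2$, and the elementary observation that dilation by a factor $s:=2^{-l}\leq 1$ never enlarges the $C^{k,\gamma}$ norm (because $\partial^\beta[f(s\cdot)](\eta) = s^{|\beta|}(\partial^\beta f)(s\eta)$ and $[\partial^\beta f(s\cdot)]_\gamma \leq s^{|\beta|+\gamma}[\partial^\beta f]_\gamma$) then yields $\|\widehat{K_{j,k}^\delta}(2^{-l}\cdot)\|_{C^{2d-1,1-\epsilon}(B_4)} \leq C\delta^2$. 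The algebra property of Hölder spaces produces the uniform $C^{2d-1,1-\epsilon}(B_4)$ bound for $\tilde m_l$.

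The final step invokes the lower bound \eqref{eqn:lower}: $|\tilde m_l(\theta)| = 2^{2l}|m(2^{-l}\theta)| \geq C|\theta|^2 \geq c > 0$ on $\mathrm{supp}(\phi) \subset B_4$. Combined with the uniform $C^{2d-1,1-\epsilon}$ bound from the previous step, the standard quotient and product rules for Hölder spaces yield $\|\phi\tilde m_l^\alpha/\tilde m_l\|_{C^{2d-1,1-\epsilon}(\R^d)} \leq C$ uniformly in $l$; multiplying by the prefactor $2^{-l(|\alpha|-2)}$ delivers the lemma. The main technical obstacle is the uniform $C^{2d-1,1-\epsilon}$ control of $\tilde m_l$: a naive chain rule applied to $m(2^{-l}\eta)$ would produce potentially divergent factors $2^{l(|\beta|-2)}$ for $|\beta|>2$, but these are absorbed precisely because the factors $(e^{\pm i\zeta}-1)$ appearing in $\sigma_K$ vanish at $\zeta=0$ and because dilation by $2^{-l}\leq 1$ does not amplify the Hölder norm of $\widehat{K_{j,k}^\delta}$.
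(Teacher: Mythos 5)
Your proof is correct, and it follows essentially the same underlying strategy as the paper's: estimate the rescaled symbols $\tilde m_l = 2^{2l}m(2^{-l}\cdot)$ and $\tilde m_l^\alpha = 2^{l|\alpha|}m^\alpha(2^{-l}\cdot)$, use the Hölder regularity of $\widehat{K^\delta_{j,k}}$ provided by Theorem \ref{thm:main} and the lower bound \eqref{eqn:lower}, and exploit that the factors $(e^{\pm i\zeta}-1)$ vanish at the origin to prevent divergent powers of $2^l$. Where you differ from the paper is in organization: the paper first proves the $C^{2d-1}$ bound from the derivative estimates, then treats the top-order ($|\beta|=2d-1$) Hölder seminorm separately by isolating the single term containing $\partial^\beta m$, whereas you prove the full $C^{2d-1,1-\epsilon}$ bound in one step by invoking the algebra and quotient properties of Hölder spaces together with the elementary observation that contraction by $2^{-l}\leq 1$ does not enlarge $C^{k,\gamma}$ norms. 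Your version has the advantage of making the key cancellation fully explicit through the factorization of $\tilde m_l$ and $\tilde m_l^\alpha$ into products of the uniformly smooth functions $u_l^\pm$ and the dilated $\widehat{K_{j,k}^\delta}(2^{-l}\cdot)$, something the paper asserts rather than derives; the paper's version is marginally shorter because it avoids invoking the Hölder-algebra machinery. Both proofs are sound.
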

\begin{proof}
When $\theta \in \supp \phi$, $|m(2^{-l}\theta)|$ is comparable to $2^{-2l}$ as $|\theta|\sim1$. In addition, we have the estimates
\begin{align*}
|[(2^{-l}\partial)^{\beta} m](2^{-l}\theta)| &\leq C_\beta   2^{-2l} \\
|[(2^{-l}\partial)^{\beta} m^\alpha ](2^{-l}\theta)| &\leq C_{\beta,\alpha} 2^{-l|\alpha|}
\end{align*}
for all multi-index $\beta$ with $|\beta|\leq 2d-1$. From these estimates, if follows that 
\[ \norm{g_l^\alpha}_{C^{2d-1} (\R^d) }\leq C 2^{-l(|\alpha|-2)}. \]

For the H\"{o}lder estimate, we note that when $|\beta| = 2d-1$, we may write 
\[ \partial^\beta g_l^\alpha(\theta) = 
\frac{\chi(\theta) 2^{-l(2d-1)} \partial^\beta m (2^{-l}\theta) m^{\alpha} (2^{-l}\theta)}{m(2^{-l}\theta)^2} + R(\theta), \]
where $\norm{R}_{C^1} \leq C 2^{-l(|\alpha|-2)}$. Thus, it remains to show that the $C^{0,1-\epsilon}$ norm of the first term is $O(2^{-l(|\alpha|-2)})$. This again reduces to quantify the $C^{0,1-\epsilon}$ norm of the functions resulting from replacing $\partial ^\beta m(2^{-l}\theta)$ in the first term by
\[ 
(e^{-i2^{-l}\theta_j} -1)(e^{i2^{-l}\theta_k} -1) \partial^\beta \widehat{K^\delta_{j,k}}(2^{-l}\theta) \]
for each $1\leq j,k\leq d$. One can verify that $C^{0,1-\epsilon}$ norm of the resulting functions are $O(2^{-l(|\alpha|-2)})$.
\end{proof}
Finally, we may deduce \eqref{eqn:lbound} from Lemma \ref{lm:Holder} by a standard argument. Let $|x| \geq 2^l$. Without loss of generality, we may assume that $|x_1| = \max_j |x_j|$, hence $|x_1| \sim |x|$. Using integration by parts, we see that 
\[ 
\nabla^{\alpha} G_l(x) = \frac{C2^{-ld}}{(2^{-l}x_1)^{2d-1}} \int (\partial_1)^{2d-1} g^\alpha_l(\theta) e^{i2^{-l}x\cdot \theta} d\theta. \]
After the change of variable $\theta_1 \to \theta_1 + \frac{\pi}{2^{-l}x_1}$ in the integral, we also see that 
\[ 
\nabla^{\alpha} G_l(x) = - \frac{C2^{-ld}}{(2^{-l}x_1)^{2d-1}} \int (\partial_1)^{2d-1} g^\alpha_l\l(\theta_1+\frac{\pi}{2^{-l}x_1}, \theta' \r) e^{i2^{-l}x\cdot \theta} d\theta, \]
where we write $\theta' = (\theta_2,\cdots,\theta_d)$. Estimating the average of these expressions for $\nabla^{\alpha} G_l(x)$ using Lemma \ref{lm:Holder}, we obtain \eqref{eqn:lbound} which finishes the proof.

\section{Proof of Corollary \ref{cor:GF2} on averaged solutions}
We have $f\in H^{-1}(\Z^d)$ by \eqref{eqn:HLS} and $u_\om=L_\om ^{-1} f$ is the unique solution in $H^1(\Z^d)$ to the equation $L_\om u_\om = f$. 

By the definition of $\curly{L}$, we have $\mathbb{E} [u_\om ] = \curly{L}^{-1} f$. In addition, we have
\begin{equation}\label{eqn:app}
\curly{L} (G*f) = (\curly{L} G)*f = \delta_0 * f = f
\end{equation}
which yields $ \curly{L} ^{-1} f = G*f$. We need to verify the first equality of \eqref{eqn:app}, which is trivial when $f$ is compactly supported. For general $f\in \ell^{p_d}(\Z^d)$, it suffices to show that
\begin{equation}\label{eqn:inter}
 \nabla_i^* K^\delta_{i,j} \nabla_{j} (G*f) =( \nabla_i^* K^\delta_{i,j} \nabla_{j} G)*f. 
\end{equation} 
To see this, first note that the sum defining the convolution $G*f$ converges absolutely since $G\in \ell^{q_d}(\Z^d)$ and $f\in \ell^{p_d}(\Z^d)$ and $\frac{1}{p_d}+\frac{1}{q_d} = 1$. This shows that $\nabla_j (G*f) = (\nabla_j G)*f$ with $\nabla_j G \in \ell^{q_d}(\Z^d)$. In fact, $\nabla_j G\in \ell^2(\Z^d)$ since $G\in H^1(\Z^d)$, but we do not use this fact here. Moreover, the kernel of $K_{i,j}$ belongs to $\ell^1(\Z^d)$ and we have $K_{i,j} [( \nabla_j G) * f] = (K_{i,j}  \nabla_j G) * f $ by Fubini's theorem and $K_{i,j}  \nabla_j G \in \ell^{q_d}(\Z^d)$. The argument for $\nabla_i^*$ is the same and this establishes \eqref{eqn:inter}. 

The pointwise estimate is a direct consequence of Corollary \ref{cor:GF}.
\qed

\section{Proof of the deterministic bound in Lemma \ref{lm:bourgain}}
We closely follow \cite{B} and provide some details. Recall that 
\[ T^n(x_0,x_n) = \sum_{\ux } K^1(x_0,x_1)K^2(x_1,x_2)\ldots K^n(x_{n-1},x_n), \]
where $\ux = (x_1,x_2,\ldots,x_{n-1})\in (\Z^d)^{n-1}$. When $x_0\neq x_n$, we may write
\[ \sum_{\ux} = \sum_{m\geq 0} \sum_{\ux:\; 2^m \leq \max_j{|x_j-x_{j+1}|} < 2^{m+1}} = \sum_{j_0=0}^{n-1}  \sum_{m\geq 0} \sum_{\ux\in S_{j_0}^{m}}, \]
where $S_{j_0}^{m}$ is defined in \eqref{eq:Sjupperdefn}. When $x_0 =x_n$, this yields a decomposition for the sum $\sum_{\ux}$ except for $\ux = (x_0,\cdots,x_0)$ for which we may invoke the bound $|K^1(x_0,x_0)\cdots K^n(x_0,x_0)|\leq A^n$. 

Let $\mathbf{I}_m = [0,2^m)$. Observe that
\begin{equation}\label{eqn:factor}
\begin{split}
 &\sum_{\ux \in S^m_{j_0} } K^1(x_0,x_1)\ldots K^n(x_{n-1},x_n) \\
 &= \sum_{\ux\in (\Z^d)^{n-1}} \prod_{j=1}^{j_0} K^j_{\mathbf{I}_m}(x_{j-1},x_j) K^{j_0+1}_{\mathbf{I}_{m+1}\setminus \mathbf{I}_{m}}(x_{j_0}, x_{j_0+1}) \prod_{j=j_0+2}^{n} K^j_{\mathbf{I}_{m+1}}(x_{j-1},x_j) \\
 &= \sum_{x_{j_0}, x_{j_0+1} } T^{j_0}_{\mathbf{I}_m}(x_0,x_{j_0}) K^{j_0+1}_{\mathbf{I}_{m+1}\setminus \mathbf{I}_{m} }(x_{j_0}, x_{j_0+1}) \widetilde{T}^{j_0}_{\mathbf{I}_{m+1}} (x_{j_0+1},x_n),
 \end{split}
\end{equation}
where $T^{j_0}_{\mathbf{I}_m} :=  \prod_{j=1}^{j_0} K^j_{\mathbf{I}_m}$ and $\widetilde{T}^{j_0}_{\mathbf{I}_{m+1}}  := \prod_{j=j_0+2}^{n} K^j_{\mathbf{I}_{m+1}}$. Here, the sum over $x_{j_0}$ is in fact a finite sum; $|x_0 - x_{j_0}| \leq |x_0- x_1| + \ldots+|x_{j_0-1}-x_{j_0}| \leq n 2^{m+1}.$ Similarly, the sum over $x_{j_0+1}$ is a finite sum over $|x_{j_0+1}-x_n|\leq n2^{m+1}$. From this, H\"{o}lder's inequality, and Assumption (i),
\begin{align*}
&|\eqref{eqn:factor}|\\
 &\leq A2^{-md} \sum_{x_{j_0}} |T^{j_0}_{\mathbf{I}_m} (x_0,x_{j_0})| \sum_{x_{j_0+1}} |\widetilde{T}^{j_0}_{\mathbf{I}_{m+1}}  (x_{j_0+1},x_n)| \\
& \leq C A2^{-md} (n2^m)^{2d(p-1)/p} \Big(\sum_{x_{j_0}}|T^{j_0}_{\mathbf{I}_m} (x_0,x_{j_0})|^p\Big)^{1/p} \Big(\sum_{x_{j_0+1}}|\widetilde{T}^{j_0}_{\mathbf{I}_{m+1}} (x_{j_0+1},x_n)|^p \Big)^{1/p}
\end{align*}
for $p>1$ selected by $2d(p-1)=\epsilon$. 

Let $\delta_y$ be the delta function on $\Z^d$; $\delta_y(x)$ is equal to $1$ if $x=y$ and $0$ otherwise. Note that the product of two $\ell^p$ sums in the last inequality is bounded by
\[ \norm{(T^{j_0}_{\mathbf{I}_m} )^* \delta_{x_0}}_{\ell^p(\Z^d)}  \norm{\widetilde{T}^{j_0}_{\mathbf{I}_{m+1}} \delta_{x_n}}_{\ell^p(\Z^d)}, \]
which is bounded by $[A/(p-1)]^{j_0} [A/(p-1)]^{n-j_0-1} =\l(\frac{2dA}{\epsilon}\r)^{n-1}$ by Assumption (2). Therefore, we get 
\begin{align}\label{eqn:bd}
|\eqref{eqn:factor}|  \leq C n^\epsilon \l(\frac{2dA}{\epsilon}\r)^{n} \epsilon 2^{-m(d-\epsilon)} .
\end{align}

It only remains to sum \eqref{eqn:bd} over $m\geq 0$ and $j_0$. For this, we distinguish the cases $|x_0-x_n|\geq 2n$ and $|x_0-x_n|< 2n$. For the first case, the sum over $m$ is restricted to 
\[ 2^m \geq \frac{|x_0-x_n|}{ 2n}, \]
which follows from, given that $\max_j|x_j-x_{j+1}| < 2^{m+1}$,  
\[ |x_0-x_n| \leq \sum_{j=0}^{n-1} |x_j-x_{j+1}| \leq n 2^{m+1}. \]
In the first case, therefore, summing \eqref{eqn:bd} over $m$ and $j_0$ yields
\begin{align*}
\begin{split}
 |T^n(x_0,x_n)| &\leq C  n^{1+\epsilon}\l(\frac{2dA}{\epsilon}\r)^{n} \epsilon \sum_{m: \; 2^m \geq |x_0-x_n|/ (2n)}  2^{-m(d-\epsilon)} \\
 & \leq (C_d A/\epsilon)^n \epsilon \br{x_0-x_n}^{-(d-\epsilon)}. 
 \end{split}
\end{align*}

When $|x_0-x_n|< 2n$, we sum \eqref{eqn:bd} over $m\geq 0$ and $j_0$. Then we get 
\begin{align*}
 |T^n(x_0,x_n)| \leq C  n^{1+\epsilon} \l(\frac{2dA}{\epsilon}\r)^{n} \epsilon 
\end{align*}
which completes the proof since $1 \leq Cn \br{x_0-x_n}^{-1}$.

\section{Proof of Lemma \ref{lm:bourgain2} on disjointification}
Since $S'=\emptyset$ when $E_l \cap F_l \neq \emptyset$ for some $l$, we may assume that $E_l \cap F_l = \emptyset$ for all $1\leq l\leq m$.

We closely follow the argument given in Section 4 of \cite{B}.  We introduce an additional set of variables (``Steinhaus system'') on the torus $\mathbb T=\R/2\pi \Z$
$$
\ol{\theta}:=(\ol{\theta}^1, \ol{\theta}^2, \ldots, \ol{\theta}^m ), \text{ where } \ol{\theta}^l := 
\setof{\theta^l_x\in \mathbb T}{x\in\Z^d}.
$$
We use these variables to define the complex-valued functions $e_j:\Z^d\to \C$,
$$
\begin{aligned}
e_j(x,\ol{\theta}):=&\prod_{l=1}^m \exp\l(i \nu^l_j \theta^l_{x}\r),\\
\textnormal{where }\quad \nu^l_j:=&\be{cases}
1,\qquad \;\textnormal{ if }j\in E_l,\\
-1,\qquad \textnormal{if }j\in F_l,\\
0,\qquad \;\textnormal{ otherwise}.
\e{cases}
\end{aligned}
$$
Note that $\|e_j(\cdot,\ol{\theta})\|_\it \leq 1$ for all $\ol{\theta}$.

Assume first that the set $S$ is finite. Define 
\begin{align*}
&\tilde T^n_S(x_0,x_n,\ol{\theta}):= \sum_{\ul{x}\in S} \tilde K^1(x_0,x_1)\tilde K^2(x_1,x_2)\ldots \tilde K^n(x_{n-1},x_n) \\
&= \sum_{\ul{x}\in S} K^1(x_0,x_1) \ldots K^n(x_{n-1},x_n) \prod_{l=1}^{m} \exp \l( i \l(\sum_{j\in E_l} \theta^l_{x_j} - \sum_{k\in F_l} \theta^l_{x_k} \r) \r)
\end{align*}
where we introduced the operators $\tilde K^j(x,y):=K^j(x,y)e_j(y,\ol{\theta})$ for $2\leq j\leq n$ and $\tilde K^1(x,y):=e_0(x,\ol{\theta})K^1(x,y)e_1(y,\ol{\theta})$. It is important to observe that, by the assumption, we have 
\beq\label{eq:Q1bound}
|\tilde T^n_S(x_0,x_n,\ol{\theta})|\leq M(x_0,x_n)
\eeq
for all $\ol{\theta}$.

The next step is to average the bound \eqref{eq:Q1bound} over the variables $\ol{\theta}$ with respect to specific probability measures to be chosen. Define the set
$$
\begin{aligned}
 \Z^d_S :=& \{ x_0,x_n\}\\
 & \cup \{ x\in \Z^d: x=x_j \text{ for some } (x_1,\ldots,x_{n-1})\in S \text{  and  }  1\leq j\leq n-1\} 
 \end{aligned}
 $$
which is finite since $S$ is finite by assumption. For each $-1<t<1$, let $P_t(\theta)$ be the Poisson kernel of the unit disk
\[
P_t(\theta)=\sum_{n=-\it}^\it t^{|n|}e^{in\theta}.
\]
Note that $P_t(\theta) \frac{d\theta}{2\pi}$ is a probability measure on $\T$. For each $1\leq l\leq m$ and $|t|<1$, consider the product measure $\d \mu_t^l$ on $\T^{\Z^d_S}= \prod_{x\in \Z^d_S} \T$ given by
\[ \d \mu_t^l (\ol{\theta}^l) := \prod_{x\in \Z^d_S} P_t(\theta^l_x) \frac{d\theta^l_x}{2\pi}.\] 

We first average \eqref{eqn:pointbd} over the probability space $\T^{\Z^d_S}$ equipped with the measure $\d\mu_{t}^1$. On the one hand, since \eqref{eq:Q1bound} holds pointwise in $\ol{\theta}$, we have 
\begin{equation}\label{eqn:pointbd}
\l|\int_{\mathbb T^{\Z^d_S}} \tilde T^n_S(x_0,x_n,\ol{\theta})
\d\mu_{t}^1\l(\ol{\theta}^1 \r)\r| \leq M(x_0,x_n)
\end{equation}
for any $(\ol{\theta}^2, \ldots, \ol{\theta}^m)$ and $|t|<1$. On the other hand, we may write the integral above as
$$
\begin{aligned}
& \sum_{\ul{x}\in S} K^1(x_0,x_1)K^2(x_1,x_2)\ldots K^n(x_{n-1},x_n) \prod_{l=2}^{m} \exp \l( i \l(\sum_{j\in E_l} \theta^l_{x_j} - \sum_{k\in F_l} \theta^l_{x_k} \r) \r) \\
&\times
\int_{\T^{\Z^d_S}} \exp \l( i \l(\sum_{j\in E_1} \theta^1_{x_j} - \sum_{k\in F_1} \theta^1_{x_k} \r) \r) \prod_{x\in \Z^d_S} P_{t}(\theta^1_x) \frac{\d\theta_{x}^1}{2\pi}.
\end{aligned}
$$
As was observed in \cite{B}, the integral in the above line is equal to $ t^{w_{(x_0,x_1,\ldots,x_n)}}$, where 
\[ w_{(x_0,\ldots,x_n)} = \sum_{x \in \Z^d_S} \bigl| { | \{ j\in E_1: x_j=x \}| - | \{ k\in  F_1: x_k=x \}| }\bigr| \leq |E_1| + |F_1|. \]
Moreover, $w_{(x_0,\ldots,x_n)}= |E_1|+|F_1|$ if and only if $\ul{x} \in S_1 $, 
where 
\[ S_1 := S\cap \{\ul{x}: \{x_j: j\in E_1 \} \cap \{x_k:k\in F_1 \} = \emptyset \}. \]
Therefore, we may write $\int_{\mathbb T^{\Z^d_S}} \tilde T^n_S(x_0,x_n,\ol{\theta})
\d\mu_{t}^1(\ol{\theta}^1 )$ as a polynomial 
\begin{equation} \label{eqn:poly}
f(t)= a_D t^D +a_{D-1}t^{D-1} + \ldots + a_0,
\end{equation}
where $D= |E_1| + |F_1|$ and 
$$
\begin{aligned} 
a_D =&  
  \sum_{\ul{x}\in S_1} K^1(x_0,x_1)K^2(x_1,x_2)\ldots K^n(x_{n-1},x_n) \\
   &\prod_{l=2}^{m}\exp \l( i \l(\sum_{j\in E_l} \theta^l_{x_j} - \sum_{k\in F_l} \theta^l_{x_k} \r) \r). 
\end{aligned}
$$

At this point, we recall a special case of the Markov brothers' inequality.
\begin{lm}
Let $f(t)$ be a polynomial as in \eqref{eqn:poly}. Then we have
\[ |a_D| \leq 2^{D-1} \max_{ -1\leq t \leq 1}  |f(t)|. \]
\end{lm}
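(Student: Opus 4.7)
My plan is to reduce the statement to the classical extremal property of Chebyshev polynomials. Recall that the Chebyshev polynomial of the first kind, $T_D(t)=\cos(D\arccos t)$, is a polynomial of degree $D$ with leading coefficient $2^{D-1}$ (for $D\geq 1$), satisfies $|T_D(t)|\leq 1$ on $[-1,1]$, and attains the values $\pm 1$ alternately at the $D+1$ Chebyshev nodes $t_k=\cos(k\pi/D)$, $k=0,1,\ldots,D$, with $T_D(t_k)=(-1)^k$.

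Let $M:=\max_{-1\leq t\leq 1}|f(t)|$ and suppose for contradiction that $|a_D|>2^{D-1}M$. Without loss of generality we may assume $a_D>0$ (otherwise replace $f$ by $-f$). Consider the auxiliary polynomial
\[
g(t):=\frac{a_D}{2^{D-1}}\,T_D(t)-f(t).
\]
Since $T_D$ has leading coefficient $2^{D-1}$, the $t^D$-term cancels and $\deg g\leq D-1$.

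At each Chebyshev node $t_k$, the first summand of $g(t_k)$ equals $\tfrac{a_D}{2^{D-1}}(-1)^k$, which by assumption has absolute value strictly larger than $M\geq|f(t_k)|$. Hence $\mathrm{sign}\,g(t_k)=(-1)^k$ for every $k=0,1,\ldots,D$, so $g$ changes sign between each consecutive pair of nodes. By the intermediate value theorem, $g$ has at least $D$ zeros in $[-1,1]$, which contradicts $\deg g\leq D-1$ unless $g\equiv 0$; but $g\equiv 0$ would force $|a_D|=2^{D-1}M$, again contradicting the strict inequality. Therefore $|a_D|\leq 2^{D-1}M$, completing the proof.

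I do not expect any substantial obstacle here; the only point requiring care is the sign-alternation argument for $g$ at the Chebyshev nodes, which is where the key number $2^{D-1}$ enters through the leading coefficient of $T_D$. Equality is achieved precisely when $f=\pm M\cdot T_D$, showing the constant $2^{D-1}$ is sharp.
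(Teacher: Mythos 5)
Your proof is correct. The paper does not actually supply a proof of this lemma; it simply states it as a ``special case of the Markov brothers' inequality'' and moves on, so there is no argument in the paper to compare against. What you have given is the standard Chebyshev equioscillation proof of the extremal property for the leading coefficient of a polynomial bounded on $[-1,1]$, which is exactly the right tool here: you subtract $\tfrac{a_D}{2^{D-1}}T_D$ from $f$ to kill the top coefficient, observe that under the assumption $|a_D|>2^{D-1}M$ the difference must alternate in sign at the $D+1$ Chebyshev nodes, and conclude that a nonzero polynomial of degree $\leq D-1$ would have $D$ zeros, a contradiction. The handling of the degenerate case $g\equiv 0$ and the sharpness remark are both fine. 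One very minor caveat, which applies equally to the paper's statement: the inequality and your argument implicitly assume $D\geq 1$ (for $D=0$ the leading coefficient of $T_0$ is $1$, not $2^{-1}$, and the claimed bound $|a_0|\leq \tfrac12|a_0|$ fails); this is harmless in context since $D=|E_1|+|F_1|$ and the case $D=0$ is vacuous in the application, but it would be worth stating $D\geq 1$ explicitly.
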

Combined with \eqref{eqn:pointbd}, we get, with $D=  |E_1| + |F_1|$,
\[ |a_D| \leq 2^{D-1} M(x_0,x_n) \]
for any $(\ol{\theta}^2, \ldots, \ol{\theta}^m)$.
What comes next is a similar averaging argument for the top coefficient $a_D$ over the measure $\d \mu_t^2$, which yields
 \[
 \begin{split}
 & \left|  \sum_{\ul{x}\in S_2} K^1(x_0,x_1)K^2(x_1,x_2)\ldots K^n(x_{n-1},x_n) \prod_{l=3}^{m} \exp \l( i \l(\sum_{j\in E_l} \theta^l_{x_j} - \sum_{k\in F_l} \theta^l_{x_k} \r) \r) \r| \\
& \leq \l(\prod_{1\leq  l \leq 2} 2^{|E_l|+|F_l|-1} \r) M(x_0,x_n).
\end{split}
 \]
for any $(\ol{\theta}^3, \ldots, \ol{\theta}^m)$, where 
\[ S_2 := S_1 \cap \{\ul{x}: \{x_j: j\in E_2 \} \cap \{x_k:k\in F_2 \} = \emptyset \}. \]
A successive averaging over $\d\mu_{t}^3, \ldots, \d\mu_{t}^m$ finishes the proof.

Next, assume that $S$ is not a finite set. By dominated convergence and the a priori bound \eqref{eq:log}, we have $T^n_{S'} (x_0,x_n)= \lim_{k\to \infty} T^n_{ S' \cap X_k}(x_0,x_n)$. Therefore, it is sufficient to show that 
\[
 |T^n_{S' \cap X_k}(x_0,x_n)| \leq 2^{\sum_{1\leq l\leq m} |E_l|+|F_l|} M(x_0,x_n)
\]
for all large $k\geq 1$, which follows from applying the result for the finite set to $S\cap X_k$.
\qed

\end{appendix}

\end{document}